\numberwithin{equation}{section} \DeclareMathSizes{2}{10}{12}{13}
\newtheorem{theorem}{Theorem}[section]
\newtheorem{lemma}[theorem]{Lemma}
\newtheorem{examples}[theorem]{Example}
\newtheorem{prop}[theorem]{Proposition}
\newtheorem{corollary}[theorem]{Corollary}
\newtheorem{remark}[theorem]{Remark}
\newtheorem{definition}[theorem]{Definition}
\numberwithin{equation}{section}
\title{Cohomology of modules over $H$-categories and co-$H$-categories}
\author{Mamta Balodi\footnote{mamta.balodi@gmail.com} \footnote{MB was supported by  SERB Fellowship PDF/2017/000229} $\qquad$ Abhishek Banerjee\footnote{abhishekbanerjee1313@gmail.com} \footnote{AB was partially supported by SERB Matrics fellowship MTR/2017/000112} $\qquad$ Samarpita Ray\footnote{ray.samarpita31@gmail.com} }
\date{}
\begin{document}
\maketitle
\centerline{\emph{Department of Mathematics, Indian Institute of Science, Bangalore 560012, India.}}

\begin{abstract} Let $H$ be a Hopf algebra. We consider $H$-equivariant modules over a Hopf module category $\mathcal C$
as modules over the smash extension $\mathcal C\# H$. We construct Grothendieck spectral sequences for the cohomologies
as well as the $H$-locally finite cohomologies of these objects. We also introduce relative $(\mathcal D,H)$-Hopf modules
over a Hopf comodule category $\mathcal D$. These generalize relative $(A,H)$-Hopf modules over an $H$-comodule algebra
$A$. We construct Grothendieck spectral sequences for their cohomologies by using their rational  $Hom$ objects and
higher derived functors of coinvariants. 
\end{abstract}

\emph{MSC(2010) Subject Classification: 16S40, 16T05, 18E05}

\smallskip

\emph{Keywords: $H$-categories, co-$H$-categories, $H$-equivariant modules, relative Hopf modules}

\section{Introduction}

\smallskip
Let $H$ be a Hopf algebra over a field $K$. An $H$-category is a small $K$-linear category $\mathcal C$ such that the morphism
space $Hom_{\mathcal C}(X,Y)$ is an $H$-module for each couple of objects $X$, $Y\in Ob(\mathcal C)$ and the   composition
of morphisms in $\mathcal C$ is well-behaved with respect to the action of  $H$. Similarly, a co-$H$-category is a small
$K$-linear category $\mathcal D$ such that  the morphism
space $Hom_{\mathcal D}(X,Y)$ is an $H$-comodule for each couple of objects $X$, $Y\in Ob(\mathcal D)$ and the composition
of morphisms in $\mathcal D$ is well-behaved with respect to the coaction of $H$. In other words, an $H$-category is enriched over the monoidal category of $H$-modules and a
 co-$H$-category is enriched over the monoidal category of $H$-comodules. The purpose of this paper is to study cohomology in module categories over $H$-categories and co-$H$-categories. 

\smallskip
The Hopf module categories that we use were first considered by Cibils and Solotar \cite{CiSo}, where they discovered a Morita equivalence that relates Galois coverings of a category to its smash extensions via a Hopf algebra. We view these $H$-categories
and the modules over them as objects of interest in their own right. We recall here that an ordinary ring may be expressed as a preadditive category with a single object. Accordingly, an arbitrary small preadditive category may be understood as a `ring with several objects' (see Mitchell \cite{Mit1}). As such, the theories obtained by replacing rings  by preadditive categories have been developed widely in the literature (see, for instance, \cite{AB1}, \cite{EV}, \cite{LoVa}, \cite{LoVa1},\cite{Lo},\cite{Xu1}, \cite{Xu2}). In this respect, an $H$-category may be seen as an ``$H$-module algebra with several objects''. Likewise, a co-$H$-category may be seen as an ``$H$-comodule algebra with several objects.''

\smallskip
The various aspects of categorified Hopf  actions and coactions on algebras have already been studied by several authors. In \cite{HS07}, Herscovich and Solotar obtained a Grothendieck spectral sequence for the Hochschild-Mitchell cohomology of
an $H$-comodule category appearing as an $H$-Galois extension. Hopf comodule categories were also studied in \cite{StSt}, where the authors introduced cleft $H$-comodule categories and extended classical results on cleft comodule algebras. More recently,
Batista, Caenepeel and Vercruysse have shown in \cite{BCV} that several deep theorems on Hopf modules can be extended to a categorification of Hopf algebras (see also \cite{CF}). 

\smallskip
In this paper, we will construct a Grothendieck spectral sequence that computes the higher derived $Hom$ functors for $H$-equivariant
modules over an $H$-category $\mathcal C$. We will also construct a spectral sequence that gives the higher derived $Hom$ functors
for relative $(\mathcal D,H)$-modules, where $\mathcal D$ is a co-$H$-category. We will develop these cohomology theories
in a manner analogous to the ``$H$-finite cohomology'' obtained by Gu\'{e}d\'{e}non \cite{Gue} (see also \cite{Gue1}) and the cohomology of relative
Hopf modules studied by Caenepeel and Gu\'{e}d\'{e}non in \cite{CanGue} respectively.

\smallskip
We now describe the paper in more detail. We begin in Section 2 by recalling the notion of a left $H$-category and a right
co-$H$-category. For a left $H$-category $\mathcal C$, we have a category of $H$-invariants which will be  denoted by $\mathcal C^H$. 
For a right co-$H$-category $\mathcal D$, there is a corresponding category of $H$-coinvariants which will be denoted by
$\mathcal D^{coH}$.  If $H$ is a finite dimensional Hopf algebra and $H^*$ is its linear dual, then a $K$-linear category
$\mathcal D$ is a left $H^*$-category if and only if it is a right co-$H$-category. In that case, $\mathcal D^{H^*}=\mathcal D^{coH}$.

\smallskip
In Sections 3 and 4, we work with a left $H$-category $\mathcal C$. We consider right $\mathcal C$-modules that are equipped
with an additional left $H$-equivariant structure (see Definition \ref{Dx3.2}). This category is denoted by $(Mod\text{-}\mathcal C)^H_H$. If $\mathcal M$,
$\mathcal N\in (Mod\text{-}\mathcal C)^H_H$, the space $Hom_{Mod\text{-}\mathcal C}(\mathcal M,\mathcal N)$
of right $\mathcal C$-module morphisms carries a left $H$-module structure whose $H$-invariants are given by
$Hom_{Mod\text{-}\mathcal C}(\mathcal M,\mathcal N)^H=Hom_{(Mod\text{-}\mathcal C)^H_H}(\mathcal M,\mathcal N)$.

\smallskip
More precisely, let $(Mod\text{-}\mathcal C)_H$ denote the category with the same objects as $(Mod\text{-}\mathcal C)^H_H$ but whose morphisms are ordinary $\mathcal C$-modules morphisms. Then, we show that $(Mod\text{-}\mathcal C)_H$ is a left $H$-category and $(Mod\text{-}\mathcal C)^H_H$ may be recovered as the category of $H$-invariants of $(Mod\text{-}\mathcal C)_H$.
Further, we obtain that $(Mod\text{-}\mathcal C)^H_H$ is identical to the category $Mod\text{-}(\mathcal C\# H)$ of 
right modules over the smash product category $\mathcal C\# H$. In particular, this shows that $(Mod\text{-}\mathcal C)^H_H$
is a Grothendieck category. We then construct a Grothendieck spectral sequence (see Theorem \ref{Tc3.15})
\begin{equation*}
R^{p}(-)^H\left(\textnormal{Ext}^q_{Mod\text{-}\mathcal{C}}(\mathcal{M},\mathcal N)\right)\Rightarrow \left(R^{p+q}Hom_{Mod\text{-}(\mathcal{C}\#H)}(\mathcal{M},{-})\right)(\mathcal{N})
\end{equation*}
for the higher derived $Hom$ in $Mod\text{-}(\mathcal C\# H)$ in terms of the derived $Hom$ in $Mod\text{-}\mathcal C$
and the derived functor of $H$-invariants. 

\smallskip
We proceed in Section 4 to develop the ``$H$-finite cohomology'' of $(\mathcal C\# H)$-modules in a manner analogous
to Gu\'{e}d\'{e}non \cite{Gue}. If $M$ is an $H$-module, we denote by $M^{(H)}$ the collection of all elements $m\in M$
such that $Hm$ is a finite dimensional vector space. In particular, $M$ is said to be $H$-locally finite if $M^{(H)}=M$ and we let
$H$-$mod$ denote the category of $H$-locally finite modules. This leads
to a functor
\begin{equation*}
\mathcal L_{Mod\text{-}\mathcal C}:(Mod\text{-}(\mathcal C\# H))^{op}\times Mod\text{-}(\mathcal C\# H)
\longrightarrow H\text{-}mod\qquad \mathcal L_{Mod\text{-}\mathcal C}(\mathcal M,\mathcal N):=Hom_{Mod\text{-}\mathcal C}(\mathcal M,\mathcal N)^{(H)}
\end{equation*}  We then construct a Grothendieck spectral sequence (see Theorem \ref{Tb4.19})
\begin{equation*}
R^p{(-)}^{(H)}({\textnormal{Ext}}^q_{Mod\text{-}\mathcal{C}}(\mathcal{M},\mathcal{N}))\Rightarrow \left(\mathbf{R}^{p+q}\mathcal{L}_{Mod\text{-}\mathcal{C}}(\mathcal{M},-)\right)(\mathcal{N})
\end{equation*}
The left $H$-category $\mathcal C$ is said to be locally finite if every morphism space $Hom_{\mathcal C}(X,Y)$ is locally finite 
as an $H$-module. We denote by $mod\text{-}(\mathcal C\# H)$ the full subcategory of $Mod\text{-}(\mathcal C\# H)$ consisting
of those left $H$-equivariant right $\mathcal C$-modules $\mathcal M$ such that $\mathcal M(X)$ is $H$-locally finite for 
each $X\in Ob(\mathcal C)$. When $\mathcal C$ is left $H$-locally finite and right noetherian, we construct a spectral sequence 
(see Theorem \ref{Tb4.18})
\begin{equation*}
R^{p}(-)^H\big(\textnormal{Ext}^q_{Mod\text{-}\mathcal{C}}(\mathcal{M},\mathcal{N})\big)\Rightarrow \left(R^{p+q}Hom_{mod\text{-}(\mathcal{C}\#H)}(\mathcal{M},-)\right)(\mathcal{N})
\end{equation*}
In Section 5, we work with a right co-$H$-category $\mathcal D$ and introduce the category ${_{\mathcal{D}}}{\mathscr{M}}^H$ of relative $(\mathcal D,H)$-Hopf modules  (see Definition \ref{Dr5.1}).
A relative $(\mathcal D,H)$-module consists of an $H$-coaction on a pair $(\mathcal D,\mathcal M)$, where $\mathcal M$
is a left $\mathcal D$-module. In particular, $\mathcal M(X)$ is equipped with the structure of a right $H$-comodule for each
$X\in Ob(\mathcal D)$. We show that ${_{\mathcal{D}}}{\mathscr{M}}^H$ is a Grothendieck category.

\smallskip
Let $Comod\text{-}H$ be the category of $H$-comodules. Thereafter, we construct a functor (see \eqref{HOM})
\begin{equation*}
HOM_{\mathcal D\text{-}Mod}:({_{\mathcal{D}}}{\mathscr{M}}^H)^{op}
\times {_{\mathcal{D}}}{\mathscr{M}}^H\longrightarrow Comod\text{-}H
\end{equation*} by using the right adjoint of the functor $\mathcal N\otimes (-): Comod\text{-}H\longrightarrow {_{\mathcal{D}}}{\mathscr{M}}^H$ for each fixed $\mathcal N\in {_{\mathcal{D}}}{\mathscr{M}}^H$. In the case of an $H$-comodule algebra as considered by Caenepeel and Gu\'{e}d\'{e}non, 
the $HOM$ functor gives the collection of ``rational morphisms'' between relative Hopf modules (see\cite[$\S$ 2]{CanGue}). Although the category ${_{\mathcal{D}}}{\mathscr{M}}^H$ is not necessarily enriched over $Comod\text{-}H$, we see that 
$HOM_{\mathcal D\text{-}Mod}(\mathcal M,\mathcal N)$ behaves like a $Hom$ object. The morphisms in 
$Hom_{{_{\mathcal{D}}}{\mathscr{M}}^H}(\mathcal M,\mathcal N)$ may be recovered as the $H$-coinvariants
$HOM_{\mathcal D\text{-}Mod}(\mathcal M,\mathcal N)^{coH}=Hom_{{_{\mathcal{D}}}{\mathscr{M}}^H}(\mathcal M,\mathcal N)$. We then construct a Grothendieck spectral sequence (see Theorem \ref{Tf5.9})
\begin{equation*}
R^{p}(-)^{coH}(R^qHOM_{\mathcal{D}\text{-}Mod}(\mathcal{M},-)(\mathcal{N}))\Rightarrow \left(R^{p+q}Hom_{{_{\mathcal{D}}}{\mathscr{M}}^H}(\mathcal{M},-)\right)(\mathcal{N})
\end{equation*} For $\mathcal M$, $\mathcal N\in {_{\mathcal{D}}}{\mathscr{M}}^H$ with $\mathcal M$ finitely generated
as a $\mathcal D$-module, we show that $Hom_{\mathcal D\text{-}Mod}(\mathcal M,\mathcal N)$ is an $H$-comodule and that $HOM_{\mathcal D\text{-}Mod}(\mathcal M,\mathcal N)=Hom_{\mathcal D\text{-}Mod}(\mathcal M,\mathcal N)$. When 
$\mathcal D$ is also left noetherian, we construct a Grothendieck spectral sequence (see Theorem \ref{Tf5.17})
\begin{equation*}
R^{p}(-)^{coH}\big(\textnormal{Ext}^q_{\mathcal{D}\text{-}Mod}(\mathcal{M},\mathcal{N})\big)\Rightarrow \left({R}^{p+q}Hom_{{_{\mathcal{D}}}{\mathscr{M}}^H}(\mathcal{M},-)\right)(\mathcal{N})
\end{equation*} for the higher derived $Hom$ in ${_{\mathcal{D}}}{\mathscr{M}}^H$.

\smallskip
\textbf{Notations:} Throughout the paper,  $K$ is a field, $H$ is a Hopf algebra with comultiplication $\Delta$, counit $\varepsilon$ and  bijective antipode $S$. We shall use  Sweedler's notation for the coproduct $\Delta(h)=\sum h_1 \otimes h_2$ and for a coaction $\rho:M \longrightarrow M \otimes H$, $\rho(m)=\sum m_0 \otimes m_1$. We denote by $H^*$ the linear dual of $H$. The category of left $H$-modules will be denoted by $H$-$Mod$ and the category of right $H$-comodules will be denoted by $Comod$-$H$. For $M \in H$-$Mod$, we set  $M^H:=\{m \in M ~|~ hm=\varepsilon(h)m~ \forall h \in H\}$. For $M \in Comod$-$H$, we set  $M^{coH}:=\{m\in M~|~ \rho(m)= m \otimes 1_H\}$.

\section{$H$-categories and co-$H$-categories}\label{basic}

\smallskip
Let $H$ be a Hopf algebra over a field $K$. Then, it is well known (see, for instance, \cite[$\S$ 2.2]{Psch1}) that the category of $H$-modules as well as the category
of $H$-comodules is monoidal. A  $K$-linear category is said to be an $H$-module category (resp. an $H$-comodule category)  if it is enriched over the monoidal category of 
$H$-modules (resp. $H$-comodules). For more on enriched categories, the reader may see, for example, \cite[Chapter 6]{Borc} or \cite{GMK}. 

\begin{definition}\label{defH-cat} (see Cibils and Solotar \cite[Definition 2.1]{CiSo})
Let  $K$ be a field. A  $K$-linear category $\mathcal C$ is said to be a left $H$-module category  if it is enriched over the monoidal category of 
left $H$-modules. In other words, it satisfies the following
conditions:
\begin{itemize}
\item[(i)] $Hom_\mathcal{C}(X,Y)$ is a left $H$-module for all $X,Y \in Ob(\mathcal{C})$. 
\item[(ii)] $h(\text{id}_X)=\varepsilon(h)\cdot \text{id}_X$ for every $X \in Ob(\mathcal{C})$ and every $h \in H$. 
\item[(iii)] The composition of morphisms in $\mathcal C$ is $H$-equivariant, i.e., for any $h \in H$ and any pair of composable morphisms $g:X \longrightarrow Y$, $f: Y \longrightarrow Z$, we have
\begin{equation*}h(fg)=\sum h_1(f)h_2(g)
\end{equation*}
\end{itemize}
By a left $H$-category, we will always mean a small left $H$-module category. A right $H$-category may be
defined similarly. 
\end{definition}

\begin{definition}\label{Dx2.2}
Let $\mathcal{C}$ be a left $H$-module category. A morphism $f\in Hom_{\mathcal C}(X,Y)$ is said to be $H$-invariant if $h(f)=\varepsilon(h)\cdot f$ for all $h \in H$. The subcategory whose objects are the same as those of $\mathcal C$
and whose morphisms are the $H$-invariant morphisms in $\mathcal C$ is denoted by
$\mathcal C^H$. 
\end{definition}

Let $A$ be a left $H$-module algebra. A right $A$-module $M$ is said to be left $H$-equivariant  if
\begin{itemize}
\item[(i)] $M$ is a left $H$-module and
\item[(ii)] the action of $A$ on $M$ is a morphism of $H$-modules, i.e., $h(ma)=\sum h_1(m)h_2(a)$, for all $h \in H,~ a \in A$ and $m \in M.$
\end{itemize}

\begin{examples}\label{egH} (see \cite{kk}) Let $A$ be a left $H$-module algebra.
\begin{itemize}
\item[(i)]  Then, the category $_H{M}{^A}$  of (isomorphism classes of) all left $H$-equivariant finitely generated right $A$-modules, with  right $A$-module morphisms between them, is an $H$-category. In fact, one can check that for $X,Y \in Ob(_H{M}{^A})$, the morphism space $Hom_A(X,Y)$ is a left $H$-module via
\begin{equation*}h(f)(x)=\sum h_1f(S(h_2)x) \qquad \forall \textrm{ }x\in X, \textrm{ }\forall\textrm{ }f\in Hom_A(X,Y)
\end{equation*}  

\item[(ii)] The finitely generated free right $A$-modules are automatically
left $H$-equivariant. The category  of (isomorphism classes of) finitely generated free right $A$-modules is an $H$-category.
\end{itemize}
\end{examples}

We may also define the notion of a co-$H$-category, which replaces an $H$-comodule algebra (see \cite{StSt}). This notion also appears implicitly
in \cite{CiSo}. 
\begin{definition}\label{coHcat}  
By a right co-$H$-category, we will mean a small  $K$-linear category $\mathcal{D}$  that  is enriched over the monoidal category of 
right $H$-comodules. In other words, we have:
\begin{enumerate}
\item[(i)] $Hom_\mathcal{D}(X,Y)$ is a right $H$-comodule for all $X,Y \in Ob(\mathcal{D}),$ with structure map
$$\rho_{XY}:Hom_\mathcal{D}(X,Y) \longrightarrow Hom_\mathcal{D}(X,Y) \otimes H, \quad \rho_{XY}(f)=\sum f_0 \otimes f_1$$ 
\item[(ii)] $\rho_{XX}(\text{id}_X)=\text{id}_X \otimes 1_H$, for any $X \in Ob(\mathcal{D})$ and any $h \in H$.
\item[(iii)] The composition of morphisms in $\mathcal D$ is $H$-coequivariant, i.e., for any pair of composable morphisms $g:X \longrightarrow Y$, $f: Y \longrightarrow Z$,  we have
\begin{equation*}\rho_{XZ}(fg)=\sum {(fg)}_0 \otimes {(fg)}_1 =\sum f_0g_0 \otimes f_1g_1=\rho_{YZ}(f)\rho_{XY}(g)
\end{equation*}
\end{enumerate} A left co-$H$-category may be defined similarly.

\smallskip  A morphism $f\in Hom_{\mathcal D}(X,Y)$ in a right co-$H$-category  is said to be $H$-coinvariant if it satisfies $\rho_{XY}(f)=f\otimes 1_H$. The subcategory whose objects are the same as those of $\mathcal D$ and whose morphisms are $H$-coinvariant is denoted
by $\mathcal D^{coH}$. 
\end{definition}

\begin{prop}\label{P2.5x}
Let $H$ be a finite dimensional Hopf algebra and let $\mathcal{D}$ be a small  $K$-linear category. Then,  $\mathcal{D}$ is a right co-$H$-category if and only if $\mathcal{D}$ is a left $H^*$-category. Moreover, $\mathcal{D}^{H^*} = \mathcal{D}^{coH}$.
\end{prop}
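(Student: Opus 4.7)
The plan is to apply the classical isomorphism between the category of right $H$-comodules and the category of left $H^*$-modules, valid when $H$ is finite dimensional, Hom-space by Hom-space, and then check that the enrichment axioms match up under this identification.

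First, I would recall the explicit form of the equivalence. Given a right $H$-comodule $(M,\rho)$ with $\rho(m)=\sum m_0\otimes m_1$, one defines a left $H^*$-action by
\begin{equation*}
h^*\cdot m \;=\; \sum h^*(m_1)\, m_0, \qquad h^*\in H^*,\ m\in M.
\end{equation*}
Conversely, given a left $H^*$-module structure and a basis $\{e_i\}$ of $H$ with dual basis $\{e_i^*\}$, one defines a coaction by $\rho(m)=\sum_i (e_i^*\cdot m)\otimes e_i$. These two assignments are mutually inverse. I would apply this pointwise to each morphism space $Hom_{\mathcal D}(X,Y)$, so that a right $H$-comodule structure on each Hom-space is the same datum as a left $H^*$-module structure.

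Next I would translate the three enrichment axioms of Definition~\ref{coHcat} into the three axioms of Definition~\ref{defH-cat} via this correspondence. Recall that under finite-dimensionality, the counit of $H^*$ is evaluation at $1_H$, i.e.\ $\varepsilon_{H^*}(h^*)=h^*(1_H)$, and the comultiplication of $H^*$ is dual to the multiplication of $H$, i.e.\ $\sum h^*_1(a)h^*_2(b)=h^*(ab)$. Then axiom (ii) of Definition~\ref{coHcat}, $\rho_{XX}(\mathrm{id}_X)=\mathrm{id}_X\otimes 1_H$, translates to $h^*\cdot\mathrm{id}_X = h^*(1_H)\mathrm{id}_X=\varepsilon_{H^*}(h^*)\,\mathrm{id}_X$, which is axiom (ii) of Definition~\ref{defH-cat}. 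For axiom (iii), assuming $\rho(fg)=\sum f_0 g_0\otimes f_1 g_1$, one computes
\begin{equation*}
h^*\cdot(fg)=\sum h^*(f_1 g_1)\,f_0 g_0 = \sum h^*_1(f_1)\,h^*_2(g_1)\,f_0 g_0 = \sum (h^*_1\cdot f)(h^*_2\cdot g),
\end{equation*}
which is exactly the $H^*$-equivariance of composition in Definition~\ref{defH-cat}(iii). The converse direction is identical, reading the computation backwards using the dual-basis formula. This establishes the first equivalence.

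For the second claim, $f\in Hom_{\mathcal D}(X,Y)$ lies in $\mathcal D^{coH}$ iff $\rho_{XY}(f)=f\otimes 1_H$, which under the correspondence is equivalent to $h^*\cdot f = h^*(1_H)f=\varepsilon_{H^*}(h^*)f$ for all $h^*\in H^*$, i.e.\ $f\in\mathcal D^{H^*}$. I expect no serious obstacle; the only point to be careful about is keeping the Hopf-algebra duality bookkeeping straight (counit of $H^*$ as evaluation at $1_H$, comultiplication of $H^*$ as dual to multiplication of $H$). The finite-dimensionality hypothesis is essential because without it the dual-basis formula is unavailable and the comodule/module equivalence fails.
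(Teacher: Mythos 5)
Your proposal is correct and follows essentially the same route as the paper: both use the dual basis $\{e_i^*\}$ to pass between the coaction $\rho_{XY}(f)=\sum f_0\otimes f_1$ and the action $h^*(f)=\sum h^*(f_1)f_0$, verify the compatibility of composition by the computation $h^*(fg)=\sum h^*_1(f_1)h^*_2(g_1)f_0g_0=\sum h^*_1(f)h^*_2(g)$ (with the converse obtained by testing against all $h^*\in H^*$ and invoking finite dimensionality), and identify $\mathcal D^{H^*}$ with $\mathcal D^{coH}$ via $\varepsilon_{H^*}(h^*)=h^*(1_H)$. The only difference is one of detail: the paper writes out the converse direction explicitly by applying $id\otimes h^*$ to both sides of the coequivariance identity, whereas you compress this into ``reading the computation backwards,'' which is the same argument.
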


\begin{proof}
Let $\{e_1,\dots,e_n\}$ be a basis of $H$ and let $\{e_1^*,\dots,e_n^*\}$ be its dual basis. If $\mathcal{D}$ is a right co-$H$-category, then $\mathcal{D}$ becomes a left $H^*$-category with
\begin{equation*}h^*(f) :=\sum f_0 h^*(f_1)
\end{equation*} for all $h^* \in H^*$ and $f \in Hom_{\mathcal D}(X,Y)$.
Indeed, it is easy to check that this action makes $Hom_\mathcal{D}(X,Y)$  a left $H^*$-module for every $X,Y \in Ob(\mathcal{D})$ and  that
\begin{align*}
h^*(fg) &= \sum(fg)_0h^*((fg)_1)= \sum f_0g_0 h^*(f_1g_1)
=  \sum f_0g_0 h^*_1(f_1)h^*_2(g_1)=  \sum f_0 h^*_1(f_1)g_0h^*_2(g_1)= \sum h^*_1(f)h^*_2(g).
\end{align*}
Conversely, if $\mathcal{D}$ is a left $H^*$-category, then $\mathcal{D}$ is a right co-$H$-category with 
\begin{equation*}\rho_{XY}:Hom_\mathcal{D}(X,Y) \longrightarrow Hom_\mathcal{D}(X,Y) \otimes H, \quad \rho_{XY}(f):=\sum_{i=1}^n e_i^*(f)\otimes e_i
\end{equation*}
It may be verified that this gives a right $H$-comodule structure on $Hom_\mathcal{D}(X,Y)$. We need to check that the composition of morphisms in $\mathcal{D}$ is $H$-coequivariant.  For any $h^* \in H^*$, $ g \in Hom_\mathcal{D}(X,Y)$ and $f \in Hom_\mathcal{D}(Y,Z)$, we have
\begin{equation*}
\begin{array}{ll}
(id\otimes h^*)(\rho_{XZ}(fg))=(id\otimes h^*)\left(\sum_{i=1}^n e_i^*(fg) \otimes e_i\right) &=\sum_{i=1}^n e_i^*(fg) \otimes h^*(e_i)\\
&=\sum_{i=1}^n(h^*(e_i)e_i^*)(fg) \otimes 1_H\\
&=h^*(fg) \otimes 1_H\\
&=\sum_{i=1}^nh_1^*(f)h_2^*(g)\otimes 1_H\\
&=\sum_{1\leq i,j\leq n}(h^*_1(e_i)e_i^*) (f)(h^*_2(e_j)e_j^*) (g)\otimes 1_H\\
&= \sum_{1\leq i,j\leq n}e_i^* (f)e_j^*(g)\otimes h^*(e_ie_j)\\
&= (id\otimes h^*)\left(\sum_{1\leq i,j\leq n}e_i^* (f)e_j^*(g)\otimes e_ie_j\right)
\end{array}
\end{equation*}
Since $H$ is finite dimensional, it follows that 
\begin{equation*}\rho_{XZ}(fg)=\sum_{1\leq i,j\leq n}e_i^*(f)e_j^*(g)\otimes e_ie_j=\rho_{YZ}(f)\rho_{XY}(g)
\end{equation*} 
We also have
\begin{align*}
Hom_{\mathcal{D}^{H^*}}(X,Y) &= \{f \in Hom_{\mathcal D}(X,Y)\  |\ h^*(f)= \varepsilon_{H^*} (h^*)f=h^*(1_H)f,\textrm{ } \forall
 h^* \in H^*\}\\
&= \{f \in Hom_{\mathcal D}(X,Y)\  |\  \sum f_0h^*(f_1)= h^*(1_H)f, \textrm{ }\forall h^* \in H^*\}\\
&=\{f \in Hom_{\mathcal D}(X,Y)\  |\  (id \otimes h^*)(\rho_{XY}(f)) = (id \otimes h^*)(f\otimes 1_H), ~\forall h^*\in H^*\}\\
&=\{f \in Hom_{\mathcal D}(X,Y)\  |\  \rho_{XY}(f)= f\otimes 1_H\} =Hom_{ \mathcal{D}^{coH}}(X,Y). 
\end{align*} 
\end{proof}

\begin{remark}
\emph{Using   Example \ref{egH} and Proposition \ref{P2.5x}, we can obtain several examples of co-$H$-categories. Another example
of a co-$H$-category is the smash extension $\mathcal C\# H$, which will be recalled in the next section.}
\end{remark}

\section{$H$-equivariant modules and the first spectral sequence}\label{ModHcat}

\smallskip
Let $\mathcal C$ be a left $H$-category. In this section, we will study the category of $H$-equivariant $\mathcal C$-modules
and compute their higher derived $Hom$ functors by means of a spectral sequence.   We begin with the following definition  (see, for instance, \cite{ Sten,Mit2 }). 

\begin{definition}
Let $\mathcal{C}$ be a small  $K$-linear category. A right module  over $\mathcal{C}$ is  a  $K$-linear functor $\mathcal{C}^{op} \longrightarrow Vect_K$, where $Vect_K$ denotes the category of  $K$-vector spaces. Similarly, a left module over $\mathcal{C}$ is a  $K$-linear functor $\mathcal{C} \longrightarrow Vect_K$. The category of all right (resp. left) modules over $\mathcal{C}$ will be denoted by $Mod$-$\mathcal C$  (resp. $\mathcal C$-$Mod$). 
\end{definition}

For each  $X\in Ob(\mathcal C)$, the representable functors ${\bf h}_X:=Hom_{\mathcal C}(-,X)$ and $_X{\bf h}:=Hom_{\mathcal C}(X,-)$  are examples of right and left modules over $\mathcal{C}$ respectively. Unless otherwise mentioned,
by a $\mathcal C$-module we will always mean a right  $\mathcal C$-module. 

\begin{definition}\label{Dx3.2}
Let  $\mathcal{C}$ be a left $H$-category. Let $\mathcal{M}$ be a right $\mathcal C$-module with a given left $H$-module structure on 
$\mathcal M(X)$ for each $X\in Ob(\mathcal C)$. Then,  $\mathcal{M}$ is said to be a left $H$-equivariant right $\mathcal{C}$-module if 
\begin{equation*}
h(\mathcal{M}(f)(m))=\sum \mathcal{M}(h_2f)(h_1m) \quad\forall ~ h \in H,~ f\in Hom_{\mathcal C}(X,Y) , ~m \in \mathcal{M}(Y)
\end{equation*} A morphism $\eta:\mathcal M\longrightarrow\mathcal N$ of left $H$-equivariant right $\mathcal C$-modules is a morphism
$\eta\in Hom_{Mod\text{-}\mathcal C}(\mathcal M,\mathcal N)$ such that 
$\eta(X):\mathcal M(X)\longrightarrow\mathcal N(X)$ is $H$-linear for each $X\in Ob(\mathcal C)$. We will denote the category of
left $H$-equivariant right $\mathcal C$-modules by $(Mod\text{-}\mathcal C)_H^H$. 

\smallskip
By $(Mod\text{-}\mathcal C)_H$, we will denote the category whose objects are the same as those of  $(Mod\text{-}\mathcal C)_H^H$,  but whose
morphisms are those of right $\mathcal C$-modules. 
\end{definition}

\begin{lemma}  Let  $\mathcal{C}$ be a left $H$-category. 
 Given   $\mathcal M,\mathcal N\in (Mod\text{-}\mathcal C)_H$, the $H$-module action on $Hom_{Mod\text{-}\mathcal C}(\mathcal M,\mathcal N)$ given by
\begin{equation}\label{3xee3/1}
(h\cdot \eta)(X)(m)=\sum h_1\eta (X)(S(h_2)m)
\end{equation}
for $\eta\in Hom_{Mod\text{-}\mathcal C}(\mathcal M,\mathcal N)$, $h\in H$, $X\in Ob(\mathcal C)$, $m\in \mathcal M(X)$ makes $ (Mod\text{-}\mathcal C)_H$ a left $H$-category. 
\end{lemma}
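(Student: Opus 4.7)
The plan is to verify the three defining properties of a left $H$-category (Definition 2.1) for $(Mod\text{-}\mathcal{C})_H$, together with one preliminary: that the formula defining $h\cdot\eta$ actually produces a morphism in $Mod\text{-}\mathcal{C}$, i.e., a natural transformation of functors $\mathcal{C}^{op}\to Vect_K$, and then that the prescription does define a left $H$-module structure on $Hom_{Mod\text{-}\mathcal{C}}(\mathcal{M},\mathcal{N})$.

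The main obstacle is the naturality check for $h\cdot\eta$. For $f\in Hom_{\mathcal{C}}(X,Y)$ and $m\in\mathcal{M}(Y)$, I would compute $(h\cdot\eta)(X)(\mathcal{M}(f)(m))=\sum h_1\eta(X)(S(h_2)\mathcal{M}(f)(m))$ in three moves. First, use the $H$-equivariance of $\mathcal{M}$ from Definition \ref{Dx3.2} to move $S(h_2)$ through $\mathcal{M}(f)$; this splits $S(h_2)$ via the anti-coalgebra identity $\Delta(S(h_2))=\sum S(h_3)\otimes S(h_2)$, turning the expression into a sum over $\Delta^2(h)$. Second, invoke the naturality of $\eta$ as a morphism in $Mod\text{-}\mathcal{C}$ to pass from $\mathcal{M}(S(h_2)f)$ to $\mathcal{N}(S(h_2)f)$. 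Third, apply the $H$-equivariance of $\mathcal{N}$ to move $h_1$ past $\mathcal{N}(S(h_2)f)$, which forces a further splitting of $h_1$ and places the whole expression over $\Delta^3(h)$. The resulting sum contains two adjacent Sweedler factors of the form $h_iS(h_{i+1})$; the antipode identity $\sum h_iS(h_{i+1})=\varepsilon(\cdot)\,1_H$ collapses them and leaves precisely $\mathcal{N}(f)((h\cdot\eta)(Y)(m))$. This cascade of one antipode-on-coproduct, one naturality, one equivariance, and one antipode collapse is the heart of the argument; everything else is bookkeeping. The left $H$-module axioms on $Hom_{Mod\text{-}\mathcal{C}}(\mathcal{M},\mathcal{N})$ are then short: $1\cdot\eta=\eta$ is immediate from $\Delta(1_H)=1_H\otimes 1_H$ and $S(1_H)=1_H$, while $(hk)\cdot\eta=h\cdot(k\cdot\eta)$ uses that $\Delta$ is an algebra map and $S$ an anti-algebra map.

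It remains to verify the two $H$-category axioms. The unit axiom $h\cdot\mathrm{id}_{\mathcal{M}}=\varepsilon(h)\,\mathrm{id}_{\mathcal{M}}$ is an immediate application of $\sum h_1S(h_2)=\varepsilon(h)\,1_H$. The equivariance of composition, $h\cdot(\eta\circ\zeta)=\sum h_1\cdot\eta\circ h_2\cdot\zeta$ for composable $\zeta,\eta$, is obtained by expanding the right-hand side at an object $X$ and element $l$: each factor contributes one Sweedler splitting, so the total expression lives over $\Delta^3(h)$, and the two inner factors of the form $S(h_i)h_{i+1}$ collapse via $\sum S(a_1)a_2=\varepsilon(a)\,1_H$, returning precisely $\sum h_1\eta(X)(\zeta(X)(S(h_2)l))=(h\cdot(\eta\circ\zeta))(X)(l)$. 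No subtleties with the category structure itself arise because $(Mod\text{-}\mathcal{C})_H$ has the same underlying objects as $Mod\text{-}\mathcal{C}$, its morphism sets and $K$-linear structure are inherited unchanged, and the $H$-action is well-defined on each morphism space by the naturality check above.
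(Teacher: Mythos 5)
Your proposal is correct and follows essentially the same route as the paper: the key step in both is the composition-equivariance computation, where a Sweedler splitting produces adjacent factors $S(h_i)h_{i+1}$ (resp.\ $h_iS(h_{i+1})$) that collapse by the antipode identity. The only difference is that you spell out the naturality check showing $h\cdot\eta$ is again a $\mathcal{C}$-module morphism (via the $H$-equivariance of $\mathcal{M}$ and $\mathcal{N}$ and the identity $\Delta\circ S=(S\otimes S)\circ\tau\circ\Delta$), a step the paper leaves to the reader with ``it may be verified''; your version of that step is accurate.
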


\begin{proof} 
Using the $H$-equivariance of $\mathcal M$ and $\mathcal N$,  it may be verified that the action in \eqref{3xee3/1} defines a left $H$-module structure on $Hom_{Mod\text{-}\mathcal C}(
\mathcal M,\mathcal N)$. We now  consider $\eta \in Hom_{Mod\text{-}\mathcal C}(\mathcal M,\mathcal N)$ and $\nu\in Hom_{Mod\text{-}\mathcal C}(\mathcal N,\mathcal P)$. Then, we have
\begin{equation*}
\begin{array}{ll}
\sum ((h_1\nu)(X)\circ (h_2\eta)(X))(m) & = \sum (h_1\nu) (X)\left( h_2\eta (X)(S(h_3)m)\right)\\
&= \sum h_1\nu(X)\left(S(h_2) h_3\eta (X)(S(h_4)m)\right)\\
&= \sum h_1\nu(X)\left(\eta (X)(\varepsilon(h_2)S(h_3)m)\right) \\
&= \sum h_1\nu(X)\left(\eta (X)(S(h_2)m)\right)  \\
&=( h\cdot (\nu\circ \eta))(X)(m)
\end{array}
\end{equation*} This proves the result. 
\end{proof}

\begin{prop}\label{Pxp3.4}
 The category $(Mod\text{-}\mathcal C)_H^H$ of left $H$-equivariant right $\mathcal C$-modules is identical
to $((Mod\text{-}\mathcal C)_H)^H$. 
\end{prop}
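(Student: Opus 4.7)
Both categories have the same objects (namely left $H$-equivariant right $\mathcal C$-modules), so the content of the proposition is that the two notions of morphism coincide: a right $\mathcal C$-module morphism $\eta:\mathcal M\longrightarrow\mathcal N$ has each component $\eta(X)$ $H$-linear if and only if $h\cdot\eta=\varepsilon(h)\eta$ in the sense of \eqref{3xee3/1} for every $h\in H$. My plan is to verify these two implications directly at the level of morphism spaces.

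For the forward direction, I would assume $\eta(X)$ is $H$-linear for each $X\in Ob(\mathcal C)$ and simply compute: for $h\in H$, $m\in\mathcal M(X)$,
\begin{equation*}
(h\cdot\eta)(X)(m)=\sum h_1\eta(X)(S(h_2)m)=\sum h_1S(h_2)\eta(X)(m)=\varepsilon(h)\eta(X)(m),
\end{equation*}
using the antipode identity $\sum h_1S(h_2)=\varepsilon(h)1_H$. This shows $\eta$ is $H$-invariant in $(Mod\text{-}\mathcal C)_H$.

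For the reverse direction, suppose $\sum h_1\eta(X)(S(h_2)m)=\varepsilon(h)\eta(X)(m)$ for all $h,m,X$. I want to deduce $\eta(X)(hm)=h\eta(X)(m)$. The natural move is to substitute $h_2m$ for $m$ and $h_1$ for $h$ in the hypothesis; after coassociativity this gives
\begin{equation*}
\sum h_1\eta(X)(S(h_2)h_3m)=\sum\varepsilon(h_1)\eta(X)(h_2m)=\eta(X)(hm).
\end{equation*}
The key identity to finish is $\sum h_1\otimes S(h_2)h_3=h\otimes 1_H$ in $H\otimes H$, which follows from $\mu\circ(S\otimes\mathrm{id})\circ\Delta=\varepsilon(-)1_H$ applied in the second tensor factor of $(\Delta\otimes\mathrm{id})\Delta(h)$ combined with the counit axiom. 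This collapses the left-hand side to $h\eta(X)(m)$, giving $H$-linearity of $\eta(X)$.

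The verification is essentially formal once one spots the correct Sweedler-index substitution, so the main (minor) obstacle is the reverse implication, where the identity $\sum h_1\otimes S(h_2)h_3=h\otimes 1_H$ must be recognized and deployed cleanly. No new constructions are needed: the argument is the standard Hopf-algebraic translation between the adjoint-type action of $H$ on a hom space and the notion of an equivariant map, now carried out one object of $\mathcal C$ at a time.
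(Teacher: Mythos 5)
Your proposal is correct and follows essentially the same route as the paper: the easy direction is the immediate computation $\sum h_1S(h_2)\eta(X)(m)=\varepsilon(h)\eta(X)(m)$, and the other direction is exactly the paper's chain $\eta(X)(hm)=\sum\varepsilon(h_1)\eta(X)(h_2m)=\sum(h_1\cdot\eta)(X)(h_2m)=\sum h_1\eta(X)(S(h_2)h_3m)=h\eta(X)(m)$, which is your Sweedler substitution read in the opposite order, using the same identity $\sum h_1\otimes S(h_2)h_3=h\otimes 1_H$. No gaps.
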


\begin{proof}
Suppose that $\eta\in Hom_{Mod\text{-}\mathcal C}(
\mathcal M,\mathcal N)^H$. We claim that $\eta(X):\mathcal M(X)\longrightarrow \mathcal N(X)$ is $H$-linear for each
$X\in Ob(\mathcal C)$. For this, we observe that
\begin{equation*}
\begin{array}{ll}
\eta(X)(hm)  = \sum \eta(X)(\varepsilon(h_1)h_2m) = \sum \varepsilon(h_1)\eta(X)(h_2m)
& = \sum (h_1\cdot \eta)(X)(h_2m)\\
&= \sum h_1\eta(X)(S(h_2)h_3m)
 =h\eta(X)(m)\\
\end{array}
\end{equation*} for any $h\in H$ and $m\in \mathcal M(X)$. Conversely, if each  $\eta(X):\mathcal M(X)\longrightarrow \mathcal N(X)$ is $H$-linear, it is clear from the definition of the left $H$-action in \eqref{3xee3/1}  that $h\cdot \eta=\varepsilon(h)\eta$, i.e., $\eta\in Hom_{Mod\text{-}\mathcal C}(
\mathcal M,\mathcal N)^H$.
\end{proof}

We will now study the category $(Mod\text{-}\mathcal C)_H^H$ of left $H$-equivariant right $\mathcal C$-modules. In particular,
one may ask if  $(Mod\text{-}\mathcal C)_H^H$ is an abelian category. We will show that  $(Mod\text{-}\mathcal C)_H^H$ is in fact a Grothendieck category. For this, we will need to consider the smash product category of $\mathcal C$ and $H$. 

\begin{definition} (see \cite[$\S$ 2]{CiSo})
Let $\mathcal{C}$ be a left $H$-category. The smash product of $\mathcal{C}$
and $H$, denoted by $\mathcal{C}\# H$,  is the  $K$-linear category defined by
\begin{equation*}
\begin{array}{c}
Ob(\mathcal{C}\# H) := Ob(\mathcal{C})\qquad  Hom_{\mathcal{C}\# H}(X,Y) := Hom_{\mathcal{C}}(X,Y)\otimes H\\
\end{array}
\end{equation*}
 An element of $Hom_{\mathcal{C}\# H}(X,Y) $ is a finite sum of the form $\sum g_i\# h_i$, with $g_i\in Hom_{\mathcal C}(X,Y)$
 and $h_i\in H$. Then, the composition of morphisms in $\mathcal C\# H$  is determined by
\begin{equation*}(f\# h) (g \# h') = \sum f(h_1g)\#(h_2h')
\end{equation*}
for any pair of composable morphisms $g:X \longrightarrow Y$, $f: Y \longrightarrow Z$ in $\mathcal{C}$ and any $h,h' \in H$. 
\end{definition}

\begin{lemma}\label{2.2}
Let $\mathcal{M}\in Mod$-$(\mathcal C\# H)$. Then, $\mathcal{M}(X)$ has a left $H$-module structure for each  $X\in Ob(\mathcal{C}\#H)$  given by 
\begin{equation*} hm := \mathcal{M}(id_X \# S(h))( m)\qquad \forall ~ h\in H, m\in \mathcal M(X)
\end{equation*} Further, given any morphism $\eta:\mathcal{M}\longrightarrow \mathcal{N}$ in $Mod$-$(\mathcal C\# H)$,  every $\eta(X):\mathcal{M}(X)\longrightarrow \mathcal{N}(X)$ is $H$-linear.
\end{lemma}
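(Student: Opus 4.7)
The plan is to show that the assignment $h \mapsto \mathcal{M}(id_X \# S(h))$ factors a left $H$-module structure on $\mathcal{M}(X)$ through the endomorphism algebra $End_{\mathcal{C}\#H}(X)$, and then obtain $H$-linearity of $\eta(X)$ directly from naturality.

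First I would examine the subspace $\{id_X \# h : h \in H\}$ of $End_{\mathcal{C}\#H}(X) = End_{\mathcal{C}}(X) \otimes H$. Using the composition formula from the definition of $\mathcal{C}\#H$ together with conditions (ii) and (iii) of Definition \ref{defH-cat}, I would compute
\begin{equation*}
(id_X\# h)(id_X\# h') = \sum id_X(h_1\cdot id_X) \# h_2h' = \sum \varepsilon(h_1)\, id_X \# h_2 h' = id_X \# hh',
\end{equation*}
and note that $id_X \# 1_H$ is the identity morphism on $X$ in $\mathcal{C}\#H$. Thus the map $h \mapsto id_X\# h$ is an algebra homomorphism $H \to End_{\mathcal{C}\#H}(X)$.

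Next, since $\mathcal M$ is a contravariant $K$-linear functor, the assignment $h \mapsto \mathcal{M}(id_X\# h)$ is an algebra antihomomorphism $H \to End_K(\mathcal M(X))$. Composing with the antipode $S\colon H \to H^{op}$, which is itself an algebra antihomomorphism (and satisfies $S(1_H) = 1_H$), produces an algebra homomorphism $H \to End_K(\mathcal M(X))$, namely $h \mapsto \mathcal{M}(id_X\# S(h))$. This is precisely the claimed left $H$-action $hm := \mathcal{M}(id_X\# S(h))(m)$. Concretely, $1_H\cdot m = \mathcal{M}(id_X\# 1_H)(m) = m$ and the computation
\begin{equation*}
h(h'm) = \mathcal{M}(id_X\# S(h))\circ \mathcal{M}(id_X\# S(h'))(m) = \mathcal{M}\bigl((id_X\# S(h'))\circ(id_X\# S(h))\bigr)(m) = \mathcal{M}(id_X\# S(hh'))(m)
\end{equation*}
confirms associativity, using the first paragraph's composition formula and $S(h')S(h) = S(hh')$.

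Finally, for the $H$-linearity of $\eta(X)$, I would apply naturality of $\eta\colon \mathcal M \to \mathcal N$ to the endomorphism $id_X\# S(h)\colon X \to X$ in $\mathcal{C}\#H$. The naturality square
\begin{equation*}
\eta(X)\circ \mathcal{M}(id_X\# S(h)) = \mathcal{N}(id_X\# S(h))\circ \eta(X)
\end{equation*}
unwinds to $\eta(X)(hm) = h\,\eta(X)(m)$ for all $h\in H$, $m\in \mathcal M(X)$, as required. There is no real obstacle here; the only subtlety worth care is tracking the contravariance of $\mathcal M$ correctly, which is exactly why the antipode $S$ (rather than the identity on $H$) enters the definition of the action.
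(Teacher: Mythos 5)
Your proposal is correct and follows essentially the same route as the paper: both reduce to the computation $(id_X\# h)(id_X\# h')=id_X\# hh'$ in $End_{\mathcal{C}\#H}(X)$ (via Definition \ref{defH-cat}(ii) and the counit axiom, the paper phrasing this as $\varepsilon\circ S=\varepsilon$), combined with the contravariance of $\mathcal{M}$ and the antimultiplicativity of $S$, and both obtain the $H$-linearity of $\eta(X)$ from naturality applied to $id_X\#S(h)$. Your repackaging of the verification as ``algebra homomorphism $H\to End_{\mathcal{C}\#H}(X)$, then antihomomorphism, then compose with $S$'' is a clean way to organize the same calculation.
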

\begin{proof}
For $h,h' \in H$ and $m \in \mathcal{M}(X)$, we have
\begin{align*}
h(h' m) &= (\mathcal{M}(id_X \# S(h))\circ \mathcal{M}(id_X \# S(h')))(m)\\
					 &= \mathcal{M}((id_X \# S(h'))(id_X \# S(h)))(m)\\
					 &= \sum \mathcal{M}(S(h_2')(id_X) \# S(h_1')S(h))(m)\\
					 &= \sum\mathcal{M}(\varepsilon(S(h_2'))(id_X) \# S(h_1')S(h))(m) \quad  \text{(using 							Definition \ref{defH-cat}(ii))}\\
					 &= \mathcal{M}(id_X \# S(hh'))(m)\quad \text{(using 							$\varepsilon \circ S=\varepsilon$)}\\ &=(hh')(m)			 
\end{align*}
If $\eta:\mathcal{M}\longrightarrow \mathcal{N}$ is a morphism in $Mod$-$(\mathcal C\# H)$,  it may be verified easily that each $\eta(X):\mathcal{M}(X)\longrightarrow \mathcal{N}(X)$ is $H$-linear.
\end{proof}

\begin{prop}\label{2.3} Let $\mathcal C$ be a left $H$-category. 
Then, there is a one-one correspondence between left  $H$-equivariant right $\mathcal C$-modules and right  modules over
$\mathcal C\# H$.  
\end{prop}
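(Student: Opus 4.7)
The plan is to construct mutually inverse assignments between the objects of $(Mod\text{-}\mathcal{C})_H^H$ and $Mod\text{-}(\mathcal{C}\# H)$.

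Forward direction. Given $\mathcal{M} \in (Mod\text{-}\mathcal{C})_H^H$, I would set $\tilde{\mathcal{M}}(X) := \mathcal{M}(X)$ on objects and define
\[
\tilde{\mathcal{M}}(f \# h)(m) := S^{-1}(h)\cdot\mathcal{M}(f)(m)
\]
for a generator $f \# h \in Hom_{\mathcal{C}\# H}(X,Y)$ and $m \in \mathcal{M}(Y)$. This formula is essentially forced by two compatibility requirements: the restriction $\tilde{\mathcal{M}}(f \# 1_H)(m) = \mathcal{M}(f)(m)$ must recover the underlying right $\mathcal{C}$-module structure, and the ``$H$-part'' $\tilde{\mathcal{M}}(\text{id}_X \# h)(m) = S^{-1}(h)m$ must be inverse, under $h \mapsto S(h)$, to the prescription of Lemma \ref{2.2}. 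Preservation of identities is immediate from $S^{-1}(1_H) = 1_H$ and from the fact that $\text{id}_X \# 1_H$ serves as the identity on $X$ in $\mathcal{C}\# H$. The main task is functoriality against the smash composition $(f \# h)(g \# h') = \sum f(h_1 g) \# h_2 h'$. Distributing $S^{-1}(h_2 h') = S^{-1}(h')S^{-1}(h_2)$, this reduces to proving
\[
\sum S^{-1}(h_2)\mathcal{M}(h_1 g)(n) = \mathcal{M}(g)(S^{-1}(h)n)
\]
for arbitrary $n$, which I would establish by applying the left $H$-equivariance of $\mathcal{M}$ with $k = S^{-1}(h_2)$ (using the anti-coalgebra identity $\Delta \circ S^{-1} = \tau \circ (S^{-1}\otimes S^{-1})\circ\Delta$) and then collapsing the resulting iterated-coproduct expression via the antipode identity $\sum S^{-1}(h_2)h_1 = \varepsilon(h)\cdot 1_H$.

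Reverse direction. For $\mathcal{N} \in Mod\text{-}(\mathcal{C}\# H)$, Lemma \ref{2.2} already supplies the left $H$-action $hn := \mathcal{N}(\text{id}_X \# S(h))(n)$ and guarantees that morphisms in $Mod\text{-}(\mathcal{C}\# H)$ are pointwise $H$-linear. Restricting $\mathcal{N}$ along the inclusion $\mathcal{C} \hookrightarrow \mathcal{C}\# H$, $f \mapsto f \# 1_H$, yields an underlying right $\mathcal{C}$-module structure, and I still must verify the $H$-equivariance relation $h(\mathcal{N}(f)(n)) = \sum \mathcal{N}(h_2 f)(h_1 n)$ of Definition \ref{Dx3.2}. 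I would do this by showing both sides equal $\mathcal{N}(f \# S(h))(n)$: for the left side one uses $(f \# 1_H)(\text{id} \# S(h)) = f \# S(h)$, which follows from $S(h)_{(1)} \cdot \text{id}_X = \varepsilon(S(h)_{(1)})\text{id}_X$ (Definition \ref{defH-cat}(ii)); for the right side, one expands $(\text{id} \# S(h_1))((h_2 f) \# 1_H)$ in $\mathcal{C}\# H$ and simplifies using $\Delta \circ S = (S \otimes S)\tau\Delta$ together with the antipode identity $\sum S(h_2)h_3 \otimes S(h_1) = 1_H \otimes S(h)$.

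Finally, the two constructions are mutually inverse. Starting with $\mathcal{M}$ and going forward-then-back, we recover $\mathcal{M}(f) = \tilde{\mathcal{M}}(f \# 1_H)$ and the given $H$-action via $\tilde{\mathcal{M}}(\text{id} \# S(h))(m) = S^{-1}(S(h))m = hm$; starting with $\mathcal{N}$ and going back-then-forward, we reconstruct $\mathcal{N}(f \# h)$ by decomposing $f \# h = (f \# 1_H)(\text{id} \# h)$ and using $\mathcal{N}(\text{id} \# h)(n) = S^{-1}(h)n$, which is the inverse of the formula in Lemma \ref{2.2}. Combined with the morphism statement in Lemma \ref{2.2}, this also promotes the object-level bijection to an identification of categories. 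The principal obstacle in the whole argument is the antipode and iterated-coproduct bookkeeping in the functoriality step; once the appropriate triple-Sweedler antipode identities are isolated, the remainder is routine unwinding.
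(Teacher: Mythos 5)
Your proposal is correct and follows essentially the same route as the paper: the same formula $\mathcal M'(f\# h)(m)=S^{-1}(h)\mathcal M(f)(m)$ in one direction, restriction along $f\mapsto f\# 1_H$ together with Lemma \ref{2.2} in the other, and the same antipode and coproduct identities in both verification steps. The only (harmless) difference is organizational: you factor the functoriality check through the identity $\sum S^{-1}(h_2)\mathcal M(h_1g)(n)=\mathcal M(g)(S^{-1}(h)n)$, whereas the paper carries $\mathcal M(f)$ through the entire computation; note also that $(f\# 1_H)(\mathrm{id}\# S(h))=f\# S(h)$ already follows from $\Delta(1_H)=1_H\otimes 1_H$, without appealing to Definition \ref{defH-cat}(ii).
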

\begin{proof}
For any $H$-equivariant $\mathcal C$-module $\mathcal{M}$, we have the object $\mathcal{M}'$ in $Mod\text{-}(\mathcal{C}\# H)$  defined by
\begin{equation}\label{weq2.1}
\begin{array}{c}
\mathcal{M}'(X) := \mathcal{M}(X) \ \ \ \forall \ X \in Ob(\mathcal{C}\# H), \\
\mathcal{M}'(f \# h)(m):=S^{-1}(h) \mathcal{M}(f)(m) \ \ \ \forall \ f \# h \in Hom_{\mathcal{C}\#H}(Y,X),~ m \in \mathcal{M}(X).
\end{array}
\end{equation}
For $f' \# h' \in Hom_{\mathcal{C}\#H}(Z,Y)$,  $ f \# h \in Hom_{\mathcal{C}\#H}(Y,X)$ and $m \in \mathcal{M}(X)$, we have
\begin{equation*}
\begin{array}{ll}
\mathcal{M}' \big((f\#h)\circ (f'\#h')\big)(m) &=\sum \mathcal{M}' \big(f(h_1f')\#h_2h'\big)(m)\\
&=\sum S^{-1}(h_2h')\mathcal{M}\big(f(h_1f')\big)(m)\\
&=S^{-1}(h')\sum S^{-1}(h_2)\mathcal{M}\big(f(h_1f')\big)(m)\\
&=S^{-1}(h')\sum \mathcal{M}\big(S^{-1}(h_2)(f(h_1f'))\big)(S^{-1}(h_3)m)\ \ \ \ \ \ (\textnormal{since\ } \mathcal{M}\text{ is $H$-equivariant} )\\
&=S^{-1}(h')\sum \mathcal{M}\big(S^{-1}(h_3)(f)S^{-1}(h_2)(h_1f'))\big)(S^{-1}(h_4)m)\\
&=S^{-1}(h')\sum \mathcal{M}\big((S^{-1}(h_1)(f))\circ f')\big)(S^{-1}(h_2)m)\\
&=S^{-1}(h')\mathcal M(f')\left(\sum \mathcal{M}(S^{-1}(h_1)f)(S^{-1}(h_2)m)\right)\\
&=S^{-1}(h')\mathcal M(f')\left(S^{-1}(h)\mathcal M(f)(m))\right)\ \ \ \ \ \ \ \ \ \ \ \ \ \ \ \ \ \ \ (\textnormal{since\ } \mathcal{M}\text{ is $H$-equivariant} )\\
&= (\mathcal M'(f'\# h')\circ \mathcal M'(f\# h))(m)
\end{array}
\end{equation*}
Conversely, given any $\mathcal{M}'$ in $Mod\text{-}(\mathcal{C}\# H)$,  we can obtain an $H$-equivariant $\mathcal C$-module   defined by
\begin{equation*}
\begin{array}{c}\mathcal{M}(X) := \mathcal{M}'(X)\ \ \ \forall \ X \in Ob(\mathcal{C})\\
\mathcal{M}(f) :=  \mathcal{M}'(f \# 1_H) \ \ \ \forall\ f \in Hom_{\mathcal{C}}(Y,X)
\end{array}
\end{equation*} From Lemma \ref{2.2}, it follows that $\mathcal M(X)=\mathcal M'(X)$ has a left $H$-module structure. 
We now check that $\mathcal{M}$ is indeed $H$-equivariant:
\begin{align*}
h (\mathcal{M}(f)(m)) &= \mathcal{M}'(\text{id}_X \# S(h))(\mathcal{M}'(f \# 1_H)(m))\\
					&=  \mathcal{M}'(f \# S(h))(m)\\
					&= \sum \mathcal{M}'((id_X \# S(h_1))(h_2f \# 1_H))(m)\\
					&= \sum \mathcal{M}'(h_2f \# 1_H)(\mathcal{M}'(id_X \# S(h_1))(m))\\
					&= \sum \mathcal{M}(h_2f)(h_1m). 
\end{align*}
\end{proof}

\begin{prop}\label{2.5}
Let $\mathcal{M}$ and $\mathcal{N}$ be right $\mathcal{C} \# H$-modules. Then, $Hom_{Mod\text{-}\mathcal C}(\mathcal{M},\mathcal{N})$ is a left $H$-module and its invariants are given by $Hom_{Mod\text{-}\mathcal{C}}(\mathcal{M},\mathcal{N})^H=Hom_{Mod\text{-}(\mathcal{C}\#H)}(\mathcal{M},\mathcal{N})$.
\end{prop}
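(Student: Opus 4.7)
The plan is to deduce this proposition from results already established in the section rather than producing independent computations. First, by Proposition \ref{2.3}, the right $(\mathcal C\# H)$-modules $\mathcal M$ and $\mathcal N$ correspond to left $H$-equivariant right $\mathcal C$-modules, and hence may be regarded as objects of $(Mod\text{-}\mathcal C)_H$. The lemma following Definition \ref{Dx3.2} then endows $Hom_{Mod\text{-}\mathcal C}(\mathcal M,\mathcal N)$ with a left $H$-module structure via the formula
\begin{equation*}
(h\cdot\eta)(X)(m)=\sum h_1\,\eta(X)(S(h_2)m),
\end{equation*}
which proves the first assertion.

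Next, by Proposition \ref{Pxp3.4}, the $H$-invariants $Hom_{Mod\text{-}\mathcal C}(\mathcal M,\mathcal N)^H$ consist precisely of those right $\mathcal C$-module natural transformations $\eta:\mathcal M\longrightarrow\mathcal N$ for which each component $\eta(X):\mathcal M(X)\longrightarrow\mathcal N(X)$ is $H$-linear with respect to the $H$-module structures that $\mathcal M,\mathcal N$ carry as $(\mathcal C\# H)$-modules (via Lemma \ref{2.2}). It therefore suffices to identify this class of morphisms with $Hom_{Mod\text{-}(\mathcal C\# H)}(\mathcal M,\mathcal N)$.

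To this end, I would argue in both directions. If $\eta:\mathcal M\longrightarrow\mathcal N$ is a morphism in $Mod\text{-}(\mathcal C\# H)$, then restricting naturality to morphisms of the form $f\#1_H$ recovers naturality in $Mod\text{-}\mathcal C$, while Lemma \ref{2.2} gives the $H$-linearity of each $\eta(X)$. Conversely, suppose $\eta$ is a $\mathcal C$-module natural transformation with every $\eta(X)$ being $H$-linear. Then for an arbitrary morphism $f\#h:Y\longrightarrow X$ in $\mathcal C\# H$, the formula \eqref{weq2.1} together with $H$-linearity and naturality yields
\begin{equation*}
\eta(Y)\bigl(\mathcal M(f\#h)(m)\bigr)=\eta(Y)\bigl(S^{-1}(h)\mathcal M(f)(m)\bigr)=S^{-1}(h)\eta(Y)(\mathcal M(f)(m))=S^{-1}(h)\mathcal N(f)(\eta(X)(m))=\mathcal N(f\#h)(\eta(X)(m)),
\end{equation*}
so $\eta$ commutes with the $(\mathcal C\# H)$-action.

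There is no real obstacle here; the result is essentially a bookkeeping consequence of Propositions \ref{Pxp3.4} and \ref{2.3}. The only small point requiring care is matching the two $H$-module structures on $\mathcal M(X)$: the one arising from the $H$-equivariant $\mathcal C$-module structure (used in the formula for $h\cdot\eta$) and the one provided by Lemma \ref{2.2} from the $(\mathcal C\# H)$-action. The correspondence of Proposition \ref{2.3} is built so these coincide, so the chain of identifications goes through.
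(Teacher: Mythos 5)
Your proof is correct and follows essentially the same route as the paper: identify $\mathcal{C}\#H$-modules with $H$-equivariant $\mathcal{C}$-modules via Proposition \ref{2.3}, endow $Hom_{Mod\text{-}\mathcal{C}}(\mathcal{M},\mathcal{N})$ with the $H$-action of \eqref{3xee3/1}, and use Proposition \ref{Pxp3.4} to reduce $H$-invariance to componentwise $H$-linearity before checking compatibility with the $\mathcal{C}\#H$-action. Your verification of that last step is a slightly shorter rearrangement of the paper's computation (you apply the $H$-linearity of $\eta(Y)$ directly instead of unfolding the $H$-equivariance of $\mathcal{M}$ and $\mathcal{N}$), but the content is the same.
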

\begin{proof} We have shown in Proposition \ref{2.3} that every right $\mathcal C\# H$-module is also a left $H$-equivariant
right $\mathcal C$-module. Accordingly, we use \eqref{3xee3/1} to give an $H$-module structure on $Hom_{Mod\text{-}\mathcal{C}}(\mathcal{M},\mathcal{N})$ by setting 
\begin{equation}\label{hin4}(h\cdot \eta)(X)(m):=\sum h_1\big(\eta(X)(S(h_2)m)\big)\quad  \forall ~ h \in H,~ X \in Ob(\mathcal{C}), ~m \in \mathcal{M}(X)
\end{equation} 
 for any $\eta \in Hom_{Mod\text{-}\mathcal{C}}(\mathcal{M},\mathcal{N})$. 
 
 \smallskip
Suppose now that $\eta \in Hom_{Mod\text{-}\mathcal{C}}(\mathcal{M},\mathcal{N})^H $. From  the proof of Proposition \ref{Pxp3.4}, it follows
that $\eta(X):\mathcal M(X)\longrightarrow\mathcal N(X)$ is $H$-linear for each $X\in Ob(\mathcal C)$. We need to show that $\eta \in Hom_{Mod\text{-}(\mathcal{C}\#H)}(\mathcal{M},\mathcal{N})$. For any $f:Y \longrightarrow X$ in $\mathcal{C}$, $h \in H$ and $m \in \mathcal{M}(X)$, we have
\begin{equation*}
\begin{array}{ll}
\eta(Y)\big(\mathcal{M}(f \# h)(m)\big) & = \eta(Y)(S^{-1}(h)\mathcal M(f)(m))\\
 &= \eta(Y)\left(\mathcal M(S^{-1}(h_1)f)(S^{-1}(h_2)m)\right)~~~~~~~~(\text{since $\mathcal M$ is $H$-equivariant})\\
&= \mathcal N(S^{-1}(h_1)f)\eta(X)(S^{-1}(h_2)m)\\
&=\mathcal N(S^{-1}(h_1)f)(S^{-1}(h_2)\eta(X)(m))\\
&= S^{-1}(h)(\mathcal N(f)(\eta(X)(m)))~~~~~~~~(\text{since $\mathcal N$ is $H$-equivariant})\\
&=\mathcal N(f\#h)(\eta(X)(m))
\end{array}
\end{equation*}

Conversely, let $\eta \in Hom_{Mod\text{-}(\mathcal{C}\#H)}(\mathcal{M},\mathcal{N})$. 
Using the $H$-linearity of $\eta(X)$ from Lemma \ref{2.2}, it is clear from \eqref{hin4} that $\eta \in Hom_{Mod\text{-}\mathcal{C}}(\mathcal{M},\mathcal{N})^H $.
\end{proof}

\begin{prop}\label{equicat}
Let $\mathcal C$ be a  left $H$-category. Then, the categories  $Mod\text{-}(\mathcal C\# H)$ and $(Mod\text{-}\mathcal C)_H^H$ are identical.  In particular, the category $(Mod\text{-}\mathcal C)_H^H$ of left $H$-equivariant  right $\mathcal C$-modules is a Grothendieck category.
\end{prop}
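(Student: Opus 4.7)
The plan is to assemble the ingredients already established in Propositions \ref{Pxp3.4}, \ref{2.3} and \ref{2.5} into a full identification of the two categories, and then to invoke a standard fact about module categories over small $K$-linear categories for the Grothendieck property.

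First, I would observe that Proposition \ref{2.3} supplies a bijection on objects between $(Mod\text{-}\mathcal C)_H^H$ and $Mod\text{-}(\mathcal{C}\# H)$, via the formulas in \eqref{weq2.1}. The next step is to check that this bijection is compatible with the morphisms on both sides. By definition, a morphism in $(Mod\text{-}\mathcal C)_H^H$ is a morphism $\eta$ in $Mod\text{-}\mathcal C$ such that each $\eta(X)$ is $H$-linear, which by Proposition \ref{Pxp3.4} is exactly an element of $Hom_{Mod\text{-}\mathcal C}(\mathcal M,\mathcal N)^H$ computed with respect to the $H$-category structure \eqref{3xee3/1} on $(Mod\text{-}\mathcal C)_H$. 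But Proposition \ref{2.5} identifies this set of $H$-invariants with $Hom_{Mod\text{-}(\mathcal{C}\#H)}(\mathcal M,\mathcal N)$ under the same correspondence between objects. Hence the two Hom-sets agree as subsets of $Hom_{Mod\text{-}\mathcal C}(\mathcal M,\mathcal N)$, and composition in both categories is simply the composition of natural transformations inherited from $Mod\text{-}\mathcal C$, so the two categories are identical (not just equivalent).

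For the second assertion, I would appeal to the standard fact that for any small $K$-linear category $\mathcal E$, the functor category $Mod\text{-}\mathcal E = \operatorname{Fun}_K(\mathcal E^{op},Vect_K)$ is a Grothendieck category: it is abelian with kernels and cokernels computed pointwise in $Vect_K$, it admits all (small) colimits and these are exact since filtered colimits in $Vect_K$ are exact, and the set $\{{\bf h}_X \mid X\in Ob(\mathcal E)\}$ of representable right modules forms a family of generators by the Yoneda lemma. Applying this with $\mathcal E = \mathcal C\# H$, we conclude that $Mod\text{-}(\mathcal C\# H)$, and hence $(Mod\text{-}\mathcal C)_H^H$, is Grothendieck.

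The only place that requires any care is making sure the identification of morphisms truly goes through with no ambiguity: one must verify that the $H$-module structure on $\mathcal M(X)$ coming from Lemma \ref{2.2} (i.e., $hm = \mathcal M(\mathrm{id}_X \# S(h))(m)$) is precisely the one used in Proposition \ref{2.3} to build the equivariant $\mathcal C$-module from a $\mathcal C\# H$-module, so that the two transitions between the categories are mutually inverse. This is an easy unwinding of definitions, so I do not expect it to be the main obstacle; indeed the proposition is essentially a bookkeeping consequence of what has already been proved.
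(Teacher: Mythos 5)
Your proposal is correct and follows essentially the same route as the paper: the identification of objects via Proposition \ref{2.3}, of morphisms via Propositions \ref{Pxp3.4} and \ref{2.5}, and the Grothendieck property from the standard fact that $Mod\text{-}\mathcal{E}$ is Grothendieck for any small preadditive category $\mathcal{E}$ (the paper cites \cite[Example V.2.2]{Sten} rather than spelling out generators and exact filtered colimits). Your closing remark about checking that the $H$-module structure of Lemma \ref{2.2} matches the one used in Proposition \ref{2.3} is a sensible point of care that the paper leaves implicit.
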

\begin{proof} The fact that  $Mod\text{-}(\mathcal C\# H)$ and $(Mod\text{-}\mathcal C)_H^H$ are identical follows from 
Propositions \ref{Pxp3.4}, \ref{2.3} and \ref{2.5}. 
Further, given any small preadditive category $\mathcal{E}$, it is well known that the category $Mod\text{-}\mathcal{E}$ is a Grothendieck category (see, for instance, \cite[Example V.2.2]{Sten}). Since $\mathcal C\# H$ is a small preadditive category, the result  follows.
\end{proof}

We denote by $\mathcal M\otimes_{\mathcal C}(\mathcal{C}\#H)$ the extension of a right $\mathcal{C}$-module $\mathcal{M}$ to a right $(\mathcal{C}\#H)$-module. For the general notion of {extension and restriction of scalars} in the case of modules over a category, see, for instance, \cite[$\S$ 4]{DF}. It follows from \cite[Proposition 19]{DF} that the extension of scalars is left adjoint to the restriction of scalars.

\smallskip

\begin{lemma}\label{ext} Let $\mathcal{M}$ be a right $\mathcal C$-module. Then,

\smallskip (1)
A right $(\mathcal{C}\#H)$-module $\mathcal{M} \otimes H$ may be obtained by setting
\begin{equation*}
\begin{array}{c}
(\mathcal{M} \otimes H)(X):=\mathcal{M}(X)\otimes H\\
\left((\mathcal{M} \otimes H)(f'\#h')\right)(m \otimes h):=\sum \mathcal{M}(h_1f')(m) \otimes h_2h'
\end{array}
\end{equation*}
for any $X \in Ob(\mathcal{C}\#H)$, $f'\#h' \in Hom_{\mathcal{C}\#H}(Y,X)$, $m \in \mathcal{M}(X)$ and $h,h' \in H$. 

\smallskip (2)
$\mathcal{M} \otimes H$ is isomorphic to $\mathcal{M} \otimes_{\mathcal{C}} (\mathcal{C}\#H)$ as objects in $Mod\text{-}(\mathcal{C}\#H)$.
\end{lemma}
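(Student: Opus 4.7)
For part (1), I would verify that the proposed formula makes $\mathcal{M} \otimes H$ a $K$-linear functor $(\mathcal{C}\# H)^{op} \to Vect_K$. Preservation of identities is immediate:
\begin{equation*}
(\mathcal{M}\otimes H)(id_X \# 1_H)(m\otimes h) = \sum \mathcal{M}(1_{H,1}\cdot id_X)(m)\otimes 1_{H,2}\cdot h = m\otimes h,
\end{equation*}
using $\Delta(1_H)=1_H\otimes 1_H$ and Definition \ref{defH-cat}(ii). For compatibility with composition, given $f''\# h'': Z\to Y$ and $f'\# h': Y\to X$ in $\mathcal{C}\# H$, one compares
\begin{equation*}
(\mathcal{M}\otimes H)\big((f'\# h')(f''\# h'')\big)(m\otimes h) \quad \text{against} \quad \big((\mathcal{M}\otimes H)(f''\# h'') \circ (\mathcal{M}\otimes H)(f'\# h')\big)(m\otimes h).
\end{equation*}
Unfolding the smash composition $(f'\# h')(f''\# h'') = \sum f'(h'_1 f'')\# h'_2 h''$, applying the $H$-equivariance of composition in $\mathcal{C}$ (Definition \ref{defH-cat}(iii)) to expand $h_1(f'(h'_1 f''))$, and using contravariant functoriality of $\mathcal{M}$, both sides reduce to
\begin{equation*}
\sum \mathcal{M}\big(h_2(h'_1 f'')\big)\big(\mathcal{M}(h_1 f')(m)\big)\otimes h_3 h'_2 h'',
\end{equation*}
where $\sum h_1\otimes h_2\otimes h_3$ denotes the iterated coproduct of $h$.

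For part (2), I would construct an explicit natural isomorphism in $Mod\text{-}(\mathcal{C}\# H)$. Define
\begin{equation*}
\phi_X:(\mathcal{M}\otimes H)(X)\longrightarrow \big(\mathcal{M}\otimes_{\mathcal{C}}(\mathcal{C}\# H)\big)(X),\qquad m\otimes h\longmapsto [m\otimes (id_X\# h)],
\end{equation*}
and an inverse $\psi_X$ on generators by sending $[m\otimes (f\# h)]$, with $m\in\mathcal{M}(Y)$ and $f\# h\in Hom_{\mathcal{C}\# H}(X,Y)$, to $\mathcal{M}(f)(m)\otimes h$. Well-definedness of $\psi_X$ on the coend follows from noting that for any $g\in Hom_{\mathcal{C}}(Y_1,Y_2)$, $m\in\mathcal{M}(Y_2)$ and $f\# h\in Hom_{\mathcal{C}\# H}(X,Y_1)$, both $\mathcal{M}(g)(m)\otimes (f\# h)$ and $m\otimes((g\# 1_H)\circ(f\# h))=m\otimes(gf\# h)$ are sent to $\mathcal{M}(gf)(m)\otimes h$ by contravariant functoriality of $\mathcal{M}$. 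The identities $\phi_X\psi_X=id$ and $\psi_X\phi_X=id$ are then routine, and naturality in $X$ as a $(\mathcal{C}\# H)$-module morphism uses the explicit formula from part (1).

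The main obstacle is the bookkeeping in part (1), where one must track two coproducts simultaneously and combine $H$-equivariance of composition in $\mathcal{C}$, coassociativity, and contravariant functoriality of $\mathcal{M}$ in the correct order. Part (2) is essentially definition-chasing once the module structure of $\mathcal{M}\otimes H$ is in hand; alternatively, one could bypass the explicit inverse by verifying that $m\mapsto m\otimes 1_H$ exhibits $\mathcal{M}\otimes H$ as satisfying the universal property of the left adjoint to restriction of scalars quoted from \cite[Proposition 19]{DF}.
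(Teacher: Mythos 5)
Your proof is correct. Part (1) follows essentially the same computation as the paper: unfold the smash composition, apply the $H$-equivariance of composition in $\mathcal{C}$, use contravariance of $\mathcal{M}$, and both sides land on $\sum \mathcal{M}(h_2h'_1f'')\big(\mathcal{M}(h_1f')(m)\big)\otimes h_3h'_2h''$, exactly as in the paper's chain of equalities. For part (2), however, you take a genuinely different route. The paper never touches the coend description of $\mathcal{M}\otimes_{\mathcal{C}}(\mathcal{C}\#H)$: it proves instead that $({-})\otimes H$ is left adjoint to restriction of scalars, by exhibiting the bijection $\phi(\eta)(X)(m)=\eta(X)(m\otimes 1_H)$ and constructing its inverse via $\eta(X)(m\otimes h):=S^{-1}(h)\xi(X)(m)$, and then invokes uniqueness of adjoints together with \cite[Proposition 19]{DF}. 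Your argument builds the isomorphism $m\otimes h\mapsto [m\otimes(id_X\#h)]$ directly and checks well-definedness of the inverse on the generating relations $\mathcal{M}(g)(m)\otimes(f\#h)\sim m\otimes(gf\#h)$; I verified that this and the $(\mathcal{C}\#H)$-linearity of $\phi_X$ go through. What each approach buys: yours is more elementary in that it avoids the antipode entirely (the paper's inverse map needs $S^{-1}$ and the $H$-module structure on $\mathcal{N}(X)$ from Lemma 3.7), but it requires unwinding the explicit coend presentation of the extension of scalars, which the paper deliberately black-boxes; the paper's adjunction argument has the side benefit of recording the natural isomorphism $Hom_{Mod\text{-}(\mathcal{C}\#H)}(\mathcal{M}\otimes H,\mathcal{N})\cong Hom_{Mod\text{-}\mathcal{C}}(\mathcal{M},\mathcal{N})$, which is reused later (e.g.\ in Proposition 3.11). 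Your closing remark that one could instead verify the universal property of the left adjoint is precisely the paper's route.
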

\begin{proof}
(1) For any $f''\#h'' \in Hom_{\mathcal{C}\#H}(Z,Y)$, $f'\#h' \in Hom_{\mathcal{C}\#H}(Y,X)$, we have
\begin{equation*}
\begin{array}{ll}
\left((\mathcal{M} \otimes H)\left((f'\#h')(f''\#h'')\right)\right)(m \otimes h)&= \sum \left((\mathcal{M} \otimes H)(f'(h'_1f'')\#h'_2h'')\right)(m \otimes h)\\
&= \sum \mathcal{M}\left(h_1(f'(h'_1f'')\right)(m) \otimes h_2h'_2h''\\
&= \sum \mathcal{M}\left((h_1f')(h_2h'_1f'')\right)(m) \otimes h_3h'_2h'' \quad (\text{since $\mathcal{C}$ is a left $H$-category})\\
&= \sum \mathcal{M}(h_2h'_1f'')\mathcal{M}(h_1f')(m)  \otimes h_3h'_2h''\\
&= \sum \mathcal{M}({(h_2h')}_1f'')\mathcal{M}(h_1f')(m)  \otimes {(h_2h')}_2h''\\
&= \left((\mathcal{M}\otimes H)(f''\#h'')(\mathcal{M}\otimes H)(f'\#h')\right)(m \otimes h)
\end{array}
\end{equation*}
Further, $\left((\mathcal{M} \otimes H)(id_X \# 1_H)\right)(m \otimes h)=\mathcal{M}(h_1id_X)(m) \otimes h_2=m \otimes h$. Thus, $\mathcal{M} \otimes H \in Mod\text{-}(\mathcal{C}\#H)$.

\smallskip
(2) It may be easily checked that the assignment $\mathcal{M}\mapsto \mathcal{M}\otimes H$ defines a functor from $Mod$-$\mathcal{C}$ to $Mod$-$(\mathcal{C}\# H)$, which we denote by $({-})\otimes H$.
We will now show that the functor $({-})\otimes H: Mod$-$\mathcal{C}\longrightarrow Mod$-$(\mathcal{C}\# H)$ is the left adjoint to the restriction of scalars from $Mod$-$(\mathcal C\# H)$ to $Mod$-$\mathcal C$, i.e., there is a natural isomorphism $Hom_{Mod\text{-}(\mathcal C\# H)}(\mathcal{M}\otimes H, \mathcal{N})\cong Hom_{Mod\text{-}\mathcal{C}}(\mathcal{M},\mathcal{N})$. The result of $(2)$ will then follow from the uniqueness of adjoints. We define
$$\phi: Hom_{Mod\text{-}(\mathcal C\# H)}(\mathcal{M}\otimes H, \mathcal{N})\longrightarrow Hom_{Mod\text{-}\mathcal{C}}(\mathcal{M},\mathcal{N})$$ by setting $\phi(\eta)(X)(m):=\eta(X)(m\otimes 1_H)$ for any $\eta\in Hom_{Mod\text{-}(\mathcal C\# H)}(\mathcal{M}\otimes H, \mathcal{N})$, $X\in Ob(\mathcal{C})$ and $m\in \mathcal{M}(X)$. For any $f\in Hom_\mathcal{C}(X,Y)$ and $m'\in \mathcal{M}(Y)$, we have
\begin{align*}
\mathcal{N}(f)\phi(\eta)(Y)(m')= \mathcal{N}(f\# 1_H)\eta(Y)(m'\otimes 1_H)&=\left(\eta(X)(\mathcal{M}\otimes H)(f\# 1_H)\right)(m'\otimes 1_H)\\
&= \eta(X)(\mathcal{M}(f)(m')\otimes 1_H)= \phi(\eta)(X)(\mathcal{M}(f)(m'))
\end{align*}
Thus, $\phi(\eta) \in Hom_{Mod\text{-}\mathcal{C}}(\mathcal{M},\mathcal{N})$. We now check the injectivity of $\phi$. Let $\eta,\nu \in Hom_{Mod\text{-}(\mathcal C\# H)}(\mathcal{M}\otimes H, \mathcal{N})$ be such that $\phi(\eta)=\phi(\nu)$. Then, we have
\begin{equation*}
\begin{array}{ll}
\eta(X)(m\otimes h) =\eta(X)((\mathcal{M}\otimes H)(id_X \# h))(m\otimes 1_H)=\mathcal{N}(id_X \# h)\eta(X)(m\otimes 1_H)&=\mathcal{N}(id_X \# h)\phi(\eta)(X)(m)\\
&=\mathcal{N}(id_X \# h)\phi(\nu)(X)(m)\\
&=\nu(X)(m\otimes h) \\
\end{array}
\end{equation*}
for all $X\in Ob(\mathcal{C})$ and $m\otimes h \in \mathcal{M}(X)\otimes H$. This shows that $\eta=\nu$. Next, given any $\xi \in Hom_{Mod\text{-}\mathcal{C}}(\mathcal{M},\mathcal{N})$, we define $\eta(X):\mathcal{M}(X) \otimes H \longrightarrow \mathcal{N}(X)$ by 
$$\eta(X)(m\otimes h):= S^{-1}(h)\xi(X)(m)$$ for $X\in Ob(\mathcal{C}\#H)$ and $m\otimes h \in \mathcal{M}(X)\otimes H$. Then, for any $f'\# h' \in Hom_{Mod\text{-}(\mathcal{C}\#H)}(Y,X)$, we have
\begin{equation*}
\begin{array}{l}
\eta(Y)\left((\mathcal{M}\otimes H)(f'\#h')(m\otimes h)\right)
=\sum \eta(Y)\left( \mathcal{M}(h_1f')(m)\otimes h_2h'\right)\\
=\sum S^{-1}(h_2h')\left(\xi(Y)(\mathcal{M}(h_1f')(m))\right)\\
=\sum S^{-1}(h_2h')\left(\mathcal{N}(h_1f')\xi(X)(m)\right)\\
=\sum \mathcal{N}\left({(S^{-1}(h_2h'))}_2h_1f'\right)\left({(S^{-1}(h_2h'))}_1\xi(X)(m)\right)~~~~~~~~~~(\text{since~} \mathcal{N} \text{~is~} H\text{-equivariant})\\
=\sum \mathcal{N}\left(S^{-1}(h_1')S^{-1}(h_2)h_1f'\right)\left(S^{-1}(h_2')S^{-1}(h_3)\xi(X)(m)\right)\\
=\sum \mathcal{N}(S^{-1}(h_1')f')\left(S^{-1}(h_2')S^{-1}(h)\xi(X)(m)\right)\\
=S^{-1}(h')\mathcal{N}(f')\left(S^{-1}(h)\xi(X)(m)\right)~~~~~~~~~~~~~~~~~~~~~~~~~~~~~~~~(\text{since~} \mathcal{N} \text{~is~} H\text{-equivariant})\\
= \mathcal{N}(f'\#h')\left(S^{-1}(h)\xi(X)(m)\right)~~~~~~~~~~~~~~~~~~~~~~~~~~~~~~~~~~~~(\text{using \eqref{weq2.1}})\\
=\mathcal{N}(f'\#h')(\eta(X)(m\otimes h))
\end{array}
\end{equation*}
Hence, $\eta \in Hom_{Mod\text{-}(\mathcal C\# H)}(\mathcal{M}\otimes H, \mathcal{N})$. We also have $\phi(\eta)(X)(m)=\eta(X)(m\otimes 1_H)=\xi(X)(m)$, i.e., $\phi(\eta)=\xi$. Hence, $\phi$ is surjective. This proves the result.
\end{proof}

\begin{prop}\label{cor 2.8} (1) The extension of scalars from $Mod$-$\mathcal C$ to $Mod$-$(\mathcal C\# H)$ is exact. 

\smallskip
(2) Let $\mathcal{I}$ be an injective object in $Mod\text{-}(\mathcal C\# H)$. Then, $\mathcal{I}$ is also an injective object in $Mod\text{-}\mathcal{C}$.
\end{prop}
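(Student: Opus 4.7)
The plan is to prove both parts by exploiting the description of extension of scalars obtained in Lemma \ref{ext}, together with the standard adjunction principle relating exact left adjoints to the preservation of injectives by their right adjoints.

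For part $(1)$, I would first invoke Lemma \ref{ext}(2) to replace the abstract extension of scalars $(-)\otimes_{\mathcal C}(\mathcal C\# H)$ by the functor $(-)\otimes H : Mod\text{-}\mathcal C\longrightarrow Mod\text{-}(\mathcal C\# H)$ defined in Lemma \ref{ext}(1). Since both module categories are functor categories with values in $Vect_K$, exactness of a sequence of morphisms can be tested pointwise at each object $X\in Ob(\mathcal C)$. At a fixed $X$, the functor $(-)\otimes H$ sends $\mathcal M(X)$ to $\mathcal M(X)\otimes_K H$. Because $K$ is a field, $H$ is a free (hence flat) $K$-module, so $(-)\otimes_K H$ is exact on $Vect_K$. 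Combining these two observations gives the exactness of $(-)\otimes H$, and hence of the extension of scalars.

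For part $(2)$, I would appeal to the adjunction established in the course of the proof of Lemma \ref{ext}(2): the extension functor $(-)\otimes H$ is left adjoint to the restriction of scalars functor $Res : Mod\text{-}(\mathcal C\# H)\longrightarrow Mod\text{-}\mathcal C$. Since extension of scalars is exact by part $(1)$, the standard categorical fact that an exact left adjoint between abelian categories forces its right adjoint to preserve injective objects immediately yields that $Res(\mathcal I)$ is injective in $Mod\text{-}\mathcal C$ whenever $\mathcal I$ is injective in $Mod\text{-}(\mathcal C\# H)$. Since $Res(\mathcal I)$ is just $\mathcal I$ viewed as a right $\mathcal C$-module, this gives the claim.

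No serious obstacle is anticipated here; both assertions reduce to formal consequences of Lemma \ref{ext}, the flatness of $H$ over the ground field $K$, and a textbook fact about adjunctions. The only point requiring a moment of care is that exactness in the functor category $Mod\text{-}\mathcal C$ is indeed detected pointwise, which is standard and already implicit in the identification of $Mod\text{-}\mathcal E$ with a Grothendieck category used in Proposition \ref{equicat}.
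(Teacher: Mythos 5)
Your proposal is correct and follows essentially the same route as the paper: both parts rest on the identification of extension of scalars with $(-)\otimes H$ from Lemma \ref{ext} and on the adjunction with restriction of scalars, part (2) being exactly the paper's appeal to the standard fact that an exact left adjoint forces its right adjoint to preserve injectives. The only cosmetic difference is in part (1), where you deduce full exactness pointwise from the flatness of $H$ over the field $K$, whereas the paper checks preservation of monomorphisms pointwise and then obtains right exactness from the fact that the left adjoint preserves colimits.
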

\begin{proof}
 Let $\mathcal{M},\mathcal{N}\in Mod$-$\mathcal C$ be such that $\phi: \mathcal{M}\longrightarrow \mathcal{N}$ is a monomorphism, i.e., $\phi(X): \mathcal{M}(X)\longrightarrow \mathcal{N}(X)$ is a monomorphism in $Vect_K$ for each $X\in Ob(\mathcal{C})$. Applying the isomorphism in Lemma \ref{ext},  $\big(\phi\otimes_{\mathcal{C}}(\mathcal{C}\#H)\big)(X)=(\phi\otimes H)(X): \mathcal{M}(X)\otimes H\longrightarrow \mathcal{N}(X)\otimes H$ is a monomorphism. Since extension of scalars is a left adjoint, it already preserves colimits. This proves (1). The result of (2) now follows
from \cite[Tag 015Y]{SP}. 
\end{proof}

\begin{lemma}\label{tenpro}
Let $M \in H\text{-}Mod$ and let $\mathcal{N} \in Mod$-$(\mathcal C\# H)$. Then, a right $(\mathcal C\# H)$-module $M\otimes \mathcal{N}$ can be defined by setting 
$$(M\otimes \mathcal{N})(X) := M \otimes \mathcal{N}(X)\qquad 
(M \otimes \mathcal{N})(f)(m\otimes n) := m\otimes \mathcal{N}(f)(n)$$
for any $X \in \text{Ob}(\mathcal{C}),~ f\in Hom_{\mathcal{C}}(Y,X)$ and $m \otimes n \in M\otimes \mathcal{N}(X)$.
\end{lemma}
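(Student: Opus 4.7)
The plan is to construct $M\otimes\mathcal{N}$ first as a left $H$-equivariant right $\mathcal{C}$-module, and then invoke Propositions \ref{2.3} and \ref{equicat} to recover the required right $(\mathcal{C}\# H)$-module structure. This leverages the identification $Mod\text{-}(\mathcal{C}\# H)=(Mod\text{-}\mathcal{C})_H^H$ already established in the paper, so no fresh verification of functoriality against the composition law in $\mathcal{C}\# H$ is needed.

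First I would observe that the given formula defines a right $\mathcal{C}$-module. Since $\mathcal{N}$ is a right $(\mathcal{C}\# H)$-module, restriction along $f\mapsto f\# 1_H$ makes $\mathcal{N}$ a right $\mathcal{C}$-module, and the assignment $(M\otimes\mathcal{N})(f):=\mathrm{id}_M\otimes\mathcal{N}(f)$ is then contravariantly functorial and identity-preserving in $f$ by the corresponding properties of $\mathcal{N}$, while $K$-linearity in $f$ is immediate. Next, I equip each $(M\otimes\mathcal{N})(X)=M\otimes\mathcal{N}(X)$ with the diagonal left $H$-action
\begin{equation*}
h(m\otimes n):=\sum h_1 m\otimes h_2 n,
\end{equation*}
using the given $H$-action on $M$ and the one induced on $\mathcal{N}(X)$ by Lemma \ref{2.2}.

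The main step is to verify the $H$-equivariance condition of Definition \ref{Dx3.2}, namely
\begin{equation*}
h\bigl((M\otimes\mathcal{N})(f)(m\otimes n)\bigr)=\sum(M\otimes\mathcal{N})(h_2 f)\bigl(h_1(m\otimes n)\bigr)
\end{equation*}
for every $f\in Hom_\mathcal{C}(Y,X)$, $h\in H$, $m\in M$ and $n\in\mathcal{N}(X)$. Expanding the right-hand side and using coassociativity gives $\sum h_1 m\otimes\mathcal{N}(h_3 f)(h_2 n)$, and applying the $H$-equivariance of $\mathcal{N}$ (guaranteed by Proposition \ref{2.3}) in the form $k(\mathcal{N}(f)(n))=\sum\mathcal{N}(k_2 f)(k_1 n)$ collapses this to $\sum h_1 m\otimes h_2\mathcal{N}(f)(n)$, which is exactly the left-hand side. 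This is a routine Sweedler manipulation and is the only calculation that requires any care; I expect it to be the main (and essentially only) obstacle.

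Finally, applying Propositions \ref{2.3} and \ref{equicat} to the $H$-equivariant $\mathcal{C}$-module $M\otimes\mathcal{N}$ produces the desired right $(\mathcal{C}\# H)$-module structure. Explicitly, using \eqref{weq2.1}, the action of a morphism $f\# h\in Hom_{\mathcal{C}\# H}(Y,X)$ is
\begin{equation*}
(M\otimes\mathcal{N})(f\# h)(m\otimes n)=S^{-1}(h)\bigl(m\otimes\mathcal{N}(f)(n)\bigr)=\sum S^{-1}(h_2)m\otimes\mathcal{N}(f\# h_1)(n),
\end{equation*}
which reduces to the formula in the statement when $h=1_H$.
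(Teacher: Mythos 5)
Your proposal is correct and follows essentially the same route as the paper: define the diagonal $H$-action on each $M\otimes\mathcal{N}(X)$, check that $M\otimes\mathcal{N}$ is a left $H$-equivariant right $\mathcal{C}$-module, and invoke Proposition \ref{2.3} to conclude it is a right $(\mathcal{C}\#H)$-module. You simply spell out the Sweedler verification that the paper leaves as "easily verified," and your explicit formula for the resulting $(\mathcal{C}\#H)$-action is consistent with \eqref{weq2.1}.
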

\begin{proof}
It is clear that $M\otimes \mathcal{N} \in Mod$-$\mathcal{C}$. Now for each $X \in Ob(\mathcal{C})$, the $K$-vector space $M\otimes \mathcal{N}(X)$ has a left $H$-module structure given by $$h(m \otimes n) := \sum h_1m \otimes h_2n \qquad\forall ~ h\in H$$ It may be easily verified that $M\otimes \mathcal{N}$ is an $H$-equivariant right $\mathcal{C}$-module under this action. Therefore, $M\otimes\mathcal{N}  \in Mod$-$(\mathcal C\# H)$ by Proposition \ref{2.3}.
\end{proof}
Given any $\mathcal{N} \in Mod$-$(\mathcal C\# H)$, let $({-})\otimes \mathcal{N}: H\text{-}Mod\longrightarrow Mod$-$(\mathcal C\# H)$ denote the functor which takes any $M\in H$-$Mod$ to $M\otimes \mathcal{N}\in Mod$-$(\mathcal C\# H)$.
\begin{prop}\label{2.7}
Let $\mathcal{N}, \mathcal{P} \in Mod\text{-}(\mathcal{C}\# H)$ and let $M \in H\text{-}Mod$. Then, we have a natural isomorphism
$$\phi: Hom_{Mod\text{-}(\mathcal{C}\#H)}\left(M \otimes \mathcal{N},\mathcal{P}\right) \longrightarrow Hom_{H\text{-}Mod}\left(M, Hom_{Mod\text{-}\mathcal{C}}(\mathcal{N},\mathcal{P})\right)$$
given by $\big(\phi(\eta)(m)\big)(X)(n) := \eta(X)(m \otimes n)$ for each  $X \in Ob(\mathcal{C})$ and $m\in M, n \in \mathcal{N}(X)$.
\end{prop}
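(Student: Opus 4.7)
The plan is to exhibit $\phi$ as an adjunction isomorphism whose inverse $\psi$ sends $\xi$ to the map $\psi(\xi)$ defined by $\psi(\xi)(X)(m\otimes n) := \xi(m)(X)(n)$, extended linearly. There are three essentially routine well-definedness checks and one genuine bookkeeping computation; once these are done, mutual inverseness and naturality are formal.

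First I would verify that $\phi(\eta)$ takes values in $Hom_{Mod\text{-}\mathcal C}(\mathcal N,\mathcal P)$ and is $H$-linear. For the first, fix $m\in M$ and $f\in Hom_{\mathcal C}(Y,X)$; then naturality of $\eta$ as a $\mathcal C$-module morphism combined with the formula for $(M\otimes\mathcal N)(f)$ from Lemma \ref{tenpro} gives $\phi(\eta)(m)(Y)(\mathcal N(f)(n))=\eta(Y)(m\otimes\mathcal N(f)(n))=\mathcal P(f)\eta(X)(m\otimes n)=\mathcal P(f)\phi(\eta)(m)(X)(n)$. For $H$-linearity, recall that each $\eta(X)$ is $H$-linear by Lemma \ref{2.2}. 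Using the diagonal $H$-action on $M\otimes\mathcal N(X)$ and the action \eqref{3xee3/1} on $Hom_{Mod\text{-}\mathcal C}(\mathcal N,\mathcal P)$, I compute
\begin{equation*}
(h\cdot\phi(\eta)(m))(X)(n)=\sum h_1\eta(X)(m\otimes S(h_2)n)=\sum\eta(X)(h_1m\otimes h_2 S(h_3)n)=\eta(X)(hm\otimes n)=\phi(\eta)(hm)(X)(n),
\end{equation*}
where the middle equality uses $H$-linearity of $\eta(X)$ and the last uses the antipode axiom together with coassociativity. This is the main computation; everything else is formal.

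Next I would verify that $\psi(\xi)$ lies in $Hom_{Mod\text{-}(\mathcal C\#H)}(M\otimes\mathcal N,\mathcal P)$. Naturality in the $\mathcal C$-module sense is immediate from the fact that each $\xi(m)$ is a $\mathcal C$-module morphism. To check $H$-equivariance it suffices, by Proposition \ref{2.3}, to verify that $\psi(\xi)(X)$ is $H$-linear for each $X$; this follows by reversing the computation in the previous paragraph, using that $\xi$ is $H$-linear (so $\xi(hm)=h\cdot\xi(m)$) and expanding via \eqref{3xee3/1}.

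Finally, $\phi\circ\psi=\mathrm{id}$ and $\psi\circ\phi=\mathrm{id}$ follow directly from the defining formulas on elementary tensors (and linearity), and naturality of $\phi$ in both $M$ and $\mathcal P$ is a direct check from the formulas. The only real content is the coassociativity/antipode manipulation isolated in the second paragraph; the rest is bookkeeping to make sure the two different $H$-actions (on $M\otimes\mathcal N(X)$ and on the internal Hom) match up correctly.
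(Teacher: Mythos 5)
Your proposal is correct and follows essentially the same route as the paper: the same antipode/coassociativity computation for the $H$-linearity of $\phi(\eta)$, and the same construction of the inverse via $\psi(\xi)(X)(m\otimes n)=\xi(m)(X)(n)$, reduced to checking componentwise $H$-linearity. The only quibble is that the reduction of $(\mathcal{C}\#H)$-linearity of $\psi(\xi)$ to $H$-linearity of each $\psi(\xi)(X)$ rests on the identification of morphism sets in Proposition \ref{equicat} (equivalently Proposition \ref{2.5}), not on Proposition \ref{2.3}, which concerns only objects.
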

\begin{proof}
Let $\eta \in Hom_{Mod\text{-}(\mathcal{C}\#H)}\left(M \otimes \mathcal{N},\mathcal{P}\right)$. It may be checked that $\phi(\eta)(m) \in Hom_{Mod\text{-}\mathcal{C}}(\mathcal{N},\mathcal{P})$ for every $m \in M$. We now verify that $\phi(\eta)$ is $H$-linear, i.e., for $h\in H$:
\begin{equation*}
\begin{array}{ll}
\Big(h\big(\phi(\eta)(m)\big)\Big)(X)(n) &=\sum h_1\Big(\phi(\eta)(m)(X)\big(S(h_2)n\big)\Big)\ \ \ \ \ \ \ (\text{using Proposition}~ \ref{2.5})\\
&= \sum h_1\Big(\eta(X)\big(m \otimes S(h_2)n\big)\Big)\\
&=\sum \eta(X)\left(h_1m \otimes h_2S(h_3)n\right)\ \ \ \ \ \ \ (\textnormal{since\ } \eta(X) \textnormal{\ is\ } H\textnormal{-linear}~ \text{by Lemma \ref{2.2}})\\
&=\eta(X)(hm \otimes n)= \Big(\phi(\eta)(hm)\Big)(X)(n)
\end{array}
\end{equation*}
Clearly, $\phi$ is injective. For $f\in Hom_{H\text{-}Mod}\big(M, Hom_{Mod\text{-}\mathcal{C}}(\mathcal{N},\mathcal{P})\big)$, we consider  $\nu \in Hom_{Mod\text{-}(\mathcal{C}\#H)}(M \otimes \mathcal{N},\mathcal{P})$ determined by 
\begin{equation}\label{trx3.1}\nu(X)(m\otimes n) :=f(m)(X)(n)
\end{equation}
for each $X\in Ob(\mathcal{C}), n\in \mathcal{N}(X)$ and $m \in M$. We first check that $\nu(X):M \otimes \mathcal{N}(X) \longrightarrow \mathcal{P}(X)$ is $H$-linear for every $X\in Ob(\mathcal{C})$,  i.e., for $h\in H$:
\begin{equation*}
\begin{array}{ll}
\nu(X)\left(h(m\otimes n)\right)&=\sum\nu(X)(h_1m\otimes h_2n)\\ &
=\sum f(h_1m)(X)(h_2n)\\
&=\sum (h_1f(m))(X)(h_2n)~~~~~~~~~~~(\textnormal{since $f$ is $H$-linear})\\
&=\sum h_1\left(f(m)(X)\right)\left(S(h_2)h_3n\right)\\&=h\left(\nu(X)(m\otimes n)\right)
\end{array}
\end{equation*}
Using the fact that $f(m) \in Hom_{Mod\text{-}\mathcal{C}}(\mathcal{N},\mathcal{P})$ for each $m \in M$, it may now be verified  that $\nu \in Hom_{Mod\text{-}\mathcal{C}}(M \otimes \mathcal{N},\mathcal{P})$. From the equivalence
of categories in Proposition \ref{equicat}, it follows that $\nu \in  Hom_{Mod\text{-}(\mathcal{C}\#H)}\left(M\otimes \mathcal{N},\mathcal{P}\right)$.  From \eqref{trx3.1}, it is also clear that $\phi(\nu)=f$. 
\end{proof}

\begin{corollary}\label{2.8}
If $\mathcal{I}$ is an injective object in $Mod$-$(\mathcal C\# H)$, then $Hom_{Mod\text{-}\mathcal{C}}(\mathcal{N},\mathcal{I})$ is an injective object in $H$-$Mod$ for any $\mathcal{N} \in Mod$-$(\mathcal C\# H)$.
\end{corollary}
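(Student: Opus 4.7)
The plan is to use the adjunction established in Proposition \ref{2.7} together with the general principle that a right adjoint to an exact functor preserves injective objects.

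More precisely, Proposition \ref{2.7} yields, for each fixed $\mathcal{N} \in Mod\text{-}(\mathcal{C}\# H)$, a natural isomorphism
\begin{equation*}
Hom_{Mod\text{-}(\mathcal{C}\#H)}\left(M \otimes \mathcal{N},\mathcal{P}\right) \cong Hom_{H\text{-}Mod}\left(M, Hom_{Mod\text{-}\mathcal{C}}(\mathcal{N},\mathcal{P})\right)
\end{equation*}
which exhibits $Hom_{Mod\text{-}\mathcal{C}}(\mathcal{N},-) : Mod\text{-}(\mathcal{C}\# H) \longrightarrow H\text{-}Mod$ as a right adjoint to the functor $(-) \otimes \mathcal{N} : H\text{-}Mod \longrightarrow Mod\text{-}(\mathcal{C}\# H)$ from Lemma \ref{tenpro}.

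The key step is then to verify that the left adjoint $(-) \otimes \mathcal{N}$ is exact. Given an exact sequence $0 \to M' \to M \to M'' \to 0$ in $H\text{-}Mod$, one checks exactness of $0 \to M' \otimes \mathcal{N} \to M \otimes \mathcal{N} \to M'' \otimes \mathcal{N} \to 0$ objectwise: for each $X \in Ob(\mathcal{C})$, the sequence $0 \to M' \otimes \mathcal{N}(X) \to M \otimes \mathcal{N}(X) \to M'' \otimes \mathcal{N}(X) \to 0$ is exact because tensoring over the field $K$ is exact. This is the only genuine verification required, and it is essentially immediate since all underlying tensor products are over $K$.

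Once exactness of $(-) \otimes \mathcal{N}$ is established, the standard argument finishes the proof: if $\mathcal{I}$ is injective in $Mod\text{-}(\mathcal{C}\# H)$, then the functor $Hom_{Mod\text{-}(\mathcal{C}\# H)}(-, \mathcal{I})$ is exact, and composing with the exact functor $(-) \otimes \mathcal{N}$ shows that $Hom_{Mod\text{-}(\mathcal{C}\# H)}((-) \otimes \mathcal{N}, \mathcal{I}) \cong Hom_{H\text{-}Mod}(-, Hom_{Mod\text{-}\mathcal{C}}(\mathcal{N}, \mathcal{I}))$ is exact on $H\text{-}Mod$. Hence $Hom_{Mod\text{-}\mathcal{C}}(\mathcal{N}, \mathcal{I})$ is injective in $H\text{-}Mod$. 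I do not anticipate any real obstacle here; the content of the corollary is entirely captured by Proposition \ref{2.7} combined with the exactness of tensor products of $K$-vector spaces.
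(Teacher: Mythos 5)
Your proposal is correct and follows essentially the same route as the paper: both identify $Hom_{Mod\text{-}\mathcal{C}}(\mathcal{N},-)$ as the right adjoint of $(-)\otimes\mathcal{N}$ via Proposition \ref{2.7}, verify that $(-)\otimes\mathcal{N}$ is exact (the paper notes it preserves colimits as a left adjoint and preserves monomorphisms since tensoring over $K$ is exact, which amounts to your objectwise check), and conclude that the right adjoint of an exact functor preserves injectives.
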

\begin{proof}
From Proposition \ref{2.7}, we know that the functor $(-)\otimes \mathcal{N}:H$-$Mod
\longrightarrow Mod$-$(\mathcal C\# H)$ is a left adjoint and therefore preserves colimits. Further, given a  monomorphism $M_1\hookrightarrow M_2$ in $H$-$Mod$, it is clear from the definition in Lemma \ref{tenpro} that $M_1\otimes\mathcal N
\longrightarrow M_2\otimes\mathcal N$ is a monomorphism  in $Mod$-$(\mathcal C\# H)$.  Hence, $(-)\otimes \mathcal{N}:H$-$Mod
\longrightarrow Mod$-$(\mathcal C\# H)$ is exact. 
 As such, 
its right adjoint $Hom_{Mod\text{-}\mathcal{C}}(\mathcal{N},\_\_): Mod$-$(\mathcal C\# H)\longrightarrow H$-$Mod$
preserves injectives. 
\end{proof}

We denote by $(-)^H$ the functor from $H\text{-}Mod$ to $Vect_K$ that takes $M$ to $M^H=\{\mbox{$m\in M$ $\vert$ 
$hm=\varepsilon(h)m$ $\forall$ $h\in H$}\}$. We now recall from Proposition \ref{2.5} that we have an isomorphism
\begin{equation*}
Hom_{Mod\text{-}\mathcal{C}}(\mathcal{M},\mathcal{N})^H\cong Hom_{Mod\text{-}(\mathcal{C}\#H)}(\mathcal{M},\mathcal{N})
\end{equation*}
for any $\mathcal M$, $\mathcal N\in Mod$-$(\mathcal C\# H)$.  At the level of the derived $Hom$ functors, this leads to the following  spectral sequence.

\begin{theorem}\label{Tc3.15}
Let $\mathcal{M},~ \mathcal{N} \in Mod\text{-}(\mathcal{C}\#H)$. Then, there exists a first quadrant spectral sequence: $$R^{p}(-)^H\left(\textnormal{Ext}^q_{Mod\text{-}\mathcal{C}}(\mathcal{M},\mathcal N)\right)\Rightarrow \left(R^{p+q}Hom_{Mod\text{-}(\mathcal{C}\#H)}(\mathcal{M},{-})\right)(\mathcal{N})$$ 
\end{theorem}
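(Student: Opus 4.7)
The plan is to realize the functor $Hom_{Mod\text{-}(\mathcal{C}\#H)}(\mathcal{M},-)$ as the composition of the two left exact functors
\begin{equation*}
Hom_{Mod\text{-}\mathcal{C}}(\mathcal{M},-):Mod\text{-}(\mathcal{C}\#H)\longrightarrow H\text{-}Mod \qquad \text{and} \qquad (-)^H:H\text{-}Mod\longrightarrow Vect_K,
\end{equation*}
and then invoke Grothendieck's composite functor spectral sequence. The identification of the composition is precisely the content of Proposition \ref{2.5}, which gives $Hom_{Mod\text{-}\mathcal{C}}(\mathcal{M},\mathcal{N})^H=Hom_{Mod\text{-}(\mathcal{C}\#H)}(\mathcal{M},\mathcal{N})$. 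I would first note that both $Mod\text{-}(\mathcal{C}\#H)$ and $H\text{-}Mod$ are Grothendieck categories (the former by Proposition \ref{equicat}, the latter by standard theory) so that enough injectives are available and the derived functors in question are well-defined.

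The main technical point, which is the hypothesis required by Grothendieck's theorem, is that $Hom_{Mod\text{-}\mathcal{C}}(\mathcal{M},-)$ sends injectives of $Mod\text{-}(\mathcal{C}\#H)$ to objects acyclic for $(-)^H$. This is handled by Corollary \ref{2.8}: if $\mathcal{I}$ is injective in $Mod\text{-}(\mathcal{C}\#H)$, then $Hom_{Mod\text{-}\mathcal{C}}(\mathcal{M},\mathcal{I})$ is an injective $H$-module, hence in particular $(-)^H$-acyclic. This step would have been the hard part, but the earlier preparation via the adjunction $(-)\otimes\mathcal{N}\dashv Hom_{Mod\text{-}\mathcal{C}}(\mathcal{N},-)$ in Proposition \ref{2.7} and Corollary \ref{2.8} has already done the work.

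Next I would identify the right derived functors of $Hom_{Mod\text{-}\mathcal{C}}(\mathcal{M},-):Mod\text{-}(\mathcal{C}\#H)\longrightarrow H\text{-}Mod$ with $\textnormal{Ext}^q_{Mod\text{-}\mathcal{C}}(\mathcal{M},-)$. For this, take an injective resolution $\mathcal{N}\to \mathcal{I}^\bullet$ in $Mod\text{-}(\mathcal{C}\#H)$; by Proposition \ref{cor 2.8}(2), each $\mathcal{I}^i$ is also injective in $Mod\text{-}\mathcal{C}$, so $\mathcal{I}^\bullet$ simultaneously computes both the $H$-Mod-valued derived functors of $Hom_{Mod\text{-}\mathcal{C}}(\mathcal{M},-)$ applied to $\mathcal{N}$ and the ordinary $\textnormal{Ext}^q_{Mod\text{-}\mathcal{C}}(\mathcal{M},\mathcal{N})$. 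The cohomology of the complex $Hom_{Mod\text{-}\mathcal{C}}(\mathcal{M},\mathcal{I}^\bullet)$ therefore gives $\textnormal{Ext}^q_{Mod\text{-}\mathcal{C}}(\mathcal{M},\mathcal{N})$, naturally equipped with its $H$-module structure inherited from \eqref{3xee3/1}.

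Assembling these ingredients, the Grothendieck spectral sequence for the composition $(-)^H\circ Hom_{Mod\text{-}\mathcal{C}}(\mathcal{M},-)$ takes the form
\begin{equation*}
E_2^{p,q}=R^p(-)^H\big(R^q Hom_{Mod\text{-}\mathcal{C}}(\mathcal{M},-)(\mathcal{N})\big)\Rightarrow R^{p+q}\big((-)^H\circ Hom_{Mod\text{-}\mathcal{C}}(\mathcal{M},-)\big)(\mathcal{N}),
\end{equation*}
and the two identifications above rewrite this precisely as the asserted spectral sequence. First-quadrant convergence is automatic since both functors are left exact and $p,q\geq 0$.
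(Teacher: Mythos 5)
Your proposal is correct and follows essentially the same route as the paper: decompose $Hom_{Mod\text{-}(\mathcal{C}\#H)}(\mathcal{M},-)$ as $(-)^H\circ Hom_{Mod\text{-}\mathcal{C}}(\mathcal{M},-)$ via Proposition \ref{2.5}, use Corollary \ref{2.8} for preservation of injectives, and apply the Grothendieck composite-functor spectral sequence. Your explicit identification of $R^q Hom_{Mod\text{-}\mathcal{C}}(\mathcal{M},-)$ with $\textnormal{Ext}^q_{Mod\text{-}\mathcal{C}}(\mathcal{M},-)$ via Proposition \ref{cor 2.8}(2) is a detail the paper leaves implicit here (it spells it out only in the proof of Theorem \ref{Tb4.19}), and including it is a welcome addition rather than a deviation.
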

\begin{proof}
We consider the functors $\mathscr{F}:= Hom_{Mod\text{-}\mathcal{C}}(\mathcal{M},-): Mod\text{-}(\mathcal{C}\#H)\longrightarrow H\text{-}{Mod}$ and $\mathscr{G}:= ({-})^{H}: H{\text{-}}Mod \longrightarrow Vect_K$. We notice that
$Mod$-$(\mathcal C\# H)$, $H$-$Mod$ and $Vect_K$ are all Grothendieck categories. 
From  Corollary \ref{2.8}, we know that $\mathscr{F}$ preserves injectives. Using Proposition \ref{2.5}, we see that the functor $(\mathscr{G}\circ\mathscr{F}):Mod\text{-}(\mathcal{C}\#H)\longrightarrow Vect_{K}$ is given by $(\mathscr{G}\circ\mathscr{F})(\mathcal{N})= Hom_{Mod\text{-}\mathcal{C}}(\mathcal{M},\mathcal{N})^H = Hom_{Mod\text{-}(\mathcal{C}\#H)}(\mathcal{M},\mathcal{N})$. The result now follows from the  Grothendieck spectral sequence for
composite functors (see \cite{Grothen}). 
\end{proof}

\section{$H$-locally finite modules and cohomology}
We recall the definition of $H$-locally finite modules from \cite{Gue}. For $M$ $\in H$-$Mod$ and $m \in M$, let $Hm$ be the $H$-submodule of $M$ spanned by the elements $hm$ for $h \in H$. Consider
\begin{equation*}
M^{(H)}:=\{m \in M ~|~ Hm~ \text{is finite dimensional as a $K$-vector space}~\}
\end{equation*}

Clearly, $M^{(H)}$ is an $H$-submodule of $M$. An $H$-module $M$ is said to be $H$-locally finite if $M^{(H)}=M$. The full subcategory of $H$-$Mod$ whose objects are $H$-locally finite $H$-modules will be denoted by $H$-$mod$.

\smallskip
By Proposition \ref{2.5}, 
$Hom_{Mod\text{-}\mathcal{C}}(\mathcal{N},\mathcal{P})$ is an $H$-module for any $\mathcal{N},\mathcal{P} \in Mod\text{-}(\mathcal{C}\#H)$. We set $$\mathcal{L}_{Mod\text{-}\mathcal{C}}(\mathcal{N},\mathcal{P}):=Hom_{Mod\text{-}\mathcal{C}}{(\mathcal{N},\mathcal{P})}^{(H)}$$
Clearly, this defines a functor $\mathcal{L}_{Mod\text{-}\mathcal{C}}(\mathcal{N},{-}):Mod\text{-}(\mathcal{C}\# H) \longrightarrow H$-$mod$ for every $\mathcal{N}\in Mod\text{-}(\mathcal{C}\#H)$.

\begin{prop}\label{lem 3.4}
Let $\mathcal{N} \in Mod\text{-}(\mathcal{C}\#H)$. Then, the functor $\mathcal{L}_{Mod\text{-}\mathcal{C}}(\mathcal{N},{-}):Mod\text{-}(\mathcal{C}\# H) \longrightarrow H$-$mod$ is right adjoint to the functor $({-})\otimes \mathcal{N}:H$-$mod \longrightarrow Mod\text{-}(\mathcal{C}\# H)$, i.e., we have natural isomorphisms
\begin{equation*}
Hom_{Mod\text{-}(\mathcal{C}\#H)}(M \otimes \mathcal{N},\mathcal{P}) \cong Hom_{H\text{-}mod}\big(M,\mathcal{L}_{Mod\text{-}\mathcal{C}}(\mathcal{N},\mathcal{P})\big)
\end{equation*}
for all $\mathcal{P}\in Mod\text{-}(\mathcal{C}\#H)$ and $M\in H$-$mod$.
\end{prop}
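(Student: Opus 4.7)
The plan is to leverage the adjunction already established in Proposition \ref{2.7} and restrict it to $H$-locally finite modules. The key observation is that for any $H$-linear map $\phi : M \to Hom_{Mod\text{-}\mathcal{C}}(\mathcal{N},\mathcal{P})$ with $M \in H\text{-}mod$, the image automatically lies in the locally finite part $\mathcal{L}_{Mod\text{-}\mathcal{C}}(\mathcal{N},\mathcal{P}) = Hom_{Mod\text{-}\mathcal{C}}(\mathcal{N},\mathcal{P})^{(H)}$. Note that since $H\text{-}mod$ is a full subcategory of $H\text{-}Mod$, the functor $({-}) \otimes \mathcal{N}$ from Lemma \ref{tenpro} restricts without modification to a functor $H\text{-}mod \longrightarrow Mod\text{-}(\mathcal{C}\#H)$, so the right-hand side of the desired adjunction makes sense.

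For the main step, fix $M \in H\text{-}mod$ and $\mathcal{P} \in Mod\text{-}(\mathcal{C}\#H)$. Given any $\phi \in Hom_{H\text{-}Mod}\bigl(M, Hom_{Mod\text{-}\mathcal{C}}(\mathcal{N},\mathcal{P})\bigr)$ and any $m \in M$, the cyclic submodule $Hm \subseteq M$ is finite dimensional over $K$ by hypothesis. Since $\phi$ is $H$-linear, we have $H\cdot \phi(m) = \phi(Hm)$, which is a $K$-linear quotient of $Hm$ and thus also finite dimensional. Consequently $\phi(m) \in Hom_{Mod\text{-}\mathcal{C}}(\mathcal{N},\mathcal{P})^{(H)} = \mathcal{L}_{Mod\text{-}\mathcal{C}}(\mathcal{N},\mathcal{P})$, so $\phi$ factors through the inclusion $\mathcal{L}_{Mod\text{-}\mathcal{C}}(\mathcal{N},\mathcal{P}) \hookrightarrow Hom_{Mod\text{-}\mathcal{C}}(\mathcal{N},\mathcal{P})$. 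Conversely, every $H$-linear map $M \to \mathcal{L}_{Mod\text{-}\mathcal{C}}(\mathcal{N},\mathcal{P})$ yields, by composition with this inclusion, an $H$-linear map $M \to Hom_{Mod\text{-}\mathcal{C}}(\mathcal{N},\mathcal{P})$. This establishes a natural bijection
\begin{equation*}
Hom_{H\text{-}Mod}\bigl(M, Hom_{Mod\text{-}\mathcal{C}}(\mathcal{N},\mathcal{P})\bigr) \cong Hom_{H\text{-}mod}\bigl(M, \mathcal{L}_{Mod\text{-}\mathcal{C}}(\mathcal{N},\mathcal{P})\bigr).
\end{equation*}

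Composing this bijection with the natural isomorphism from Proposition \ref{2.7} gives the required adjunction isomorphism. Naturality in $M$ and $\mathcal{P}$ is inherited directly from the naturality already proved for $\phi$ in Proposition \ref{2.7}, since the restriction/extension between $H$-$Mod$ and $H$-$mod$ is compatible with composition of morphisms on both sides. There is no real obstacle here; the only genuine content is the local-finiteness argument showing that $H$-linear maps out of a locally finite module have locally finite image, which is immediate from the definition.
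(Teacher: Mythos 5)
Your proposal is correct and follows essentially the same route as the paper: both invoke the adjunction of Proposition \ref{2.7}, observe that an $H$-linear map $\phi$ out of a locally finite $M$ satisfies $H\phi(m)=\phi(Hm)$ and hence has image in $Hom_{Mod\text{-}\mathcal{C}}(\mathcal{N},\mathcal{P})^{(H)}=\mathcal{L}_{Mod\text{-}\mathcal{C}}(\mathcal{N},\mathcal{P})$, and then conclude via the fullness of $H$-$mod$ in $H$-$Mod$. No gaps.
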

\begin{proof}
Let $\phi:Hom_{Mod\text{-}(\mathcal{C}\#H)}(M \otimes \mathcal{N},\mathcal{P}) \longrightarrow Hom_{H\text{-}Mod}\big(M,Hom_{Mod\text{-}\mathcal{C}}(\mathcal{N},\mathcal{P})\big)$ be the isomorphism as in Proposition \ref{2.7}. Let $\eta:M \otimes \mathcal{N} \longrightarrow \mathcal{P}$ be a morphism in $Mod\text{-}(\mathcal{C}\#H)$. It follows that $H\phi(\eta)(m)$ is finite dimensional for each $m \in M$ by observing that $\phi(\eta)$ is $H$-linear and that $M$ is $H$-locally finite. Since $H$-$mod$ is a full subcategory of $H$-$Mod$, we have $Hom_{Mod\text{-}(\mathcal{C}\#H)}(M \otimes \mathcal{N},\mathcal{P}) \cong Hom_{H\text{-}Mod}\big(M,\mathcal{L}_{Mod\text{-}\mathcal{C}}(\mathcal{N},\mathcal{P})\big)\cong  Hom_{H\text{-}mod}\big(M,\mathcal{L}_{Mod\text{-}\mathcal{C}}(\mathcal{N},\mathcal{P})\big).$
\end{proof}

For any $\mathcal M\in Mod$-$(\mathcal C\# H)$, we can now consider the functor
\begin{equation}
\mathcal L_{Mod\text{-}\mathcal C}(\mathcal M,-):Mod\text{-}(\mathcal C\# H)\longrightarrow H\text{-}mod\qquad \mathcal{ N}\mapsto \mathcal L_{Mod\text{-}\mathcal{C}}(\mathcal{M},\mathcal{N})
\end{equation} Since $Mod\text{-}(\mathcal C\# H)$ is a Grothendieck category, we obtain derived functors
$\mathbf{R}^p\mathcal L_{Mod\text{-}\mathcal C}(\mathcal M,-):Mod\text{-}(\mathcal C\# H)\longrightarrow H\text{-}mod$, $p\geq 0$. We use the boldface notation to distinguish these from the functors ${R}^p\mathcal L_{Mod\text{-}\mathcal C}(\mathcal M,-)$
that will appear later in the proof of Proposition \ref{3.14} as derived functors of a  restriction of $\mathcal L_{Mod\text{-}\mathcal C}(\mathcal M,-)$. 

\begin{theorem}\label{Tb4.19}
Let $\mathcal{M},~ \mathcal{N} \in Mod$-$(\mathcal{C} \#H)$. We consider the functors 
\begin{equation*}
\begin{array}{c}\mathscr F=Hom_{Mod\text{-}\mathcal{C}}(\mathcal{M},-): {Mod}\text{-}(\mathcal{C}\#H)\longrightarrow H\text{-}{Mod}
\qquad \mathcal{ N}\mapsto Hom_{Mod\text{-}\mathcal{C}}(\mathcal{M},\mathcal{N})\\
\mathscr G= ({-})^{(H)}:H\text{-}{Mod} \longrightarrow H\text{-}mod \qquad M\mapsto M^{(H)}\\
 \end{array}
 \end{equation*} Then, we have the following spectral sequence
$$R^p{(-)}^{(H)}({\textnormal{Ext}}^q_{Mod\text{-}\mathcal{C}}(\mathcal{M},\mathcal{N}))\Rightarrow \left(\mathbf{R}^{p+q}\mathcal{L}_{Mod\text{-}\mathcal{C}}(\mathcal{M},-)\right)(\mathcal{N})$$
\end{theorem}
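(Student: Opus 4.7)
The plan is to apply Grothendieck's spectral sequence to the composition $\mathscr{G} \circ \mathscr{F}$, in complete parallel with the proof of Theorem \ref{Tc3.15}, the only new point being that the target of $\mathscr{G}$ is now $H$-$mod$ rather than $Vect_K$. First I would check that all three categories $Mod\text{-}(\mathcal{C}\#H)$, $H\text{-}Mod$ and $H\text{-}mod$ are Grothendieck (the first by Proposition \ref{equicat}, the other two being standard), so that derived functors are available. The functor $\mathscr{F} = Hom_{Mod\text{-}\mathcal{C}}(\mathcal{M},-)$ is left exact as a $Hom$-functor, and $\mathscr{G} = (-)^{(H)}$ is left exact because for a monomorphism $M' \hookrightarrow M$ in $H\text{-}Mod$, an element of $M'$ has finite-dimensional $H$-orbit precisely when its image in $M$ does, so $(M')^{(H)} = M' \cap M^{(H)}$.

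Next I would identify the composition: by definition,
\begin{equation*}
(\mathscr{G}\circ\mathscr{F})(\mathcal{N}) \;=\; Hom_{Mod\text{-}\mathcal{C}}(\mathcal{M},\mathcal{N})^{(H)} \;=\; \mathcal{L}_{Mod\text{-}\mathcal{C}}(\mathcal{M},\mathcal{N}),
\end{equation*}
so $R^{p+q}(\mathscr{G}\circ\mathscr{F})(\mathcal{N}) = \mathbf{R}^{p+q}\mathcal{L}_{Mod\text{-}\mathcal{C}}(\mathcal{M},-)(\mathcal{N})$.

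The two inputs that make the machine run are already available from Section 3. By Corollary \ref{2.8}, if $\mathcal{I}$ is injective in $Mod\text{-}(\mathcal{C}\#H)$ then $\mathscr{F}(\mathcal{I}) = Hom_{Mod\text{-}\mathcal{C}}(\mathcal{M},\mathcal{I})$ is injective in $H\text{-}Mod$, hence in particular $\mathscr{G}$-acyclic; this is the crucial hypothesis for Grothendieck's theorem. Secondly, by Proposition \ref{cor 2.8}(2), every injective $\mathcal{I}$ in $Mod\text{-}(\mathcal{C}\#H)$ is also injective in $Mod\text{-}\mathcal{C}$, so an injective resolution $\mathcal{N} \to \mathcal{I}^\bullet$ in $Mod\text{-}(\mathcal{C}\#H)$ is simultaneously an injective resolution in $Mod\text{-}\mathcal{C}$; this forces
\begin{equation*}
R^q \mathscr{F}(\mathcal{N}) \;=\; H^q\big(Hom_{Mod\text{-}\mathcal{C}}(\mathcal{M},\mathcal{I}^\bullet)\big) \;=\; \textnormal{Ext}^q_{Mod\text{-}\mathcal{C}}(\mathcal{M},\mathcal{N}).
\end{equation*}

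With these two facts in hand, Grothendieck's spectral sequence for composite functors (see \cite{Grothen}) yields
\begin{equation*}
R^p(-)^{(H)}\big(\textnormal{Ext}^q_{Mod\text{-}\mathcal{C}}(\mathcal{M},\mathcal{N})\big) \;\Rightarrow\; \mathbf{R}^{p+q}\mathcal{L}_{Mod\text{-}\mathcal{C}}(\mathcal{M},-)(\mathcal{N}),
\end{equation*}
which is the desired spectral sequence. I do not anticipate a serious obstacle here: the only point requiring care is bookkeeping of the ambient category in which $\textnormal{Ext}^q$ is computed, and this is precisely what Proposition \ref{cor 2.8}(2) addresses.
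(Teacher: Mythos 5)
Your proposal is correct and follows essentially the same route as the paper: identify $(\mathscr{G}\circ\mathscr{F})(\mathcal{N})=\mathcal{L}_{Mod\text{-}\mathcal{C}}(\mathcal{M},\mathcal{N})$, use Proposition \ref{cor 2.8}(2) to compute $R^q\mathscr{F}(\mathcal{N})=\textnormal{Ext}^q_{Mod\text{-}\mathcal{C}}(\mathcal{M},\mathcal{N})$ from an injective resolution in $Mod\text{-}(\mathcal{C}\#H)$, use Corollary \ref{2.8} to see that $\mathscr{F}$ sends injectives to injectives (hence $\mathscr{G}$-acyclics), and invoke the Grothendieck spectral sequence. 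Your explicit verification that $(-)^{(H)}$ is left exact is a small addition the paper leaves implicit (it also follows from Proposition \ref{adj}(1), since a right adjoint preserves kernels), but nothing in your argument diverges from the paper's proof.
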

\begin{proof}
We have $(\mathscr{G}\circ\mathscr{F})(\mathcal{N})=Hom_{Mod\text{-}\mathcal{C}}(\mathcal{M},\mathcal{N})^{(H)}=\mathcal{L}_{Mod\text{-}\mathcal{C}}(\mathcal{M},\mathcal{N})$. By definition, 
\begin{equation*}
R^q\mathscr F(N)=H^q(\mathscr F(\mathcal I^*))=H^q(Hom_{Mod\text{-}\mathcal{C}}(\mathcal{M},\mathcal I^*))
\end{equation*}  where   $\mathcal{I}^*$ is an injective resolution of $\mathcal{N}$ in $Mod$-$(\mathcal{C}\#H)$. By Corollary \ref{cor 2.8}, injectives in $Mod$-$(\mathcal C\# H)$ are also injectives in $Mod$-$\mathcal C$. Hence, $R^q\mathscr F(\mathcal N)=\textnormal{Ext}_{Mod\text{-}\mathcal{C}}^q(\mathcal{M},\mathcal{N})$. For any injective $\mathcal{I}$ in $Mod$-$(\mathcal C\# H)$, we know that $\mathscr{F}(\mathcal{I})$ is injective in $H$-$Mod$ by Corollary \ref{2.8}. Since the category $H\text{-}Mod$ has enough injectives, the result now follows from Grothendieck spectral sequence for composite functors (see \cite{Grothen}). 
\end{proof}
 
\begin{definition} Let $\mathcal{C}$ be a left $H$-category. 
\begin{itemize}
\item[(1)] $\mathcal{C}$ is said to be $H$-locally finite if the $H$-module $Hom_\mathcal{C}(X,Y)$ is $H$-locally finite, i.e., $Hom_\mathcal{C}(X,Y)^{(H)}=Hom_\mathcal{C}(X,Y)$, for all $X,Y \in Ob(\mathcal{C})$.

\item[(2)] Let $\mathcal{M} \in Mod\text{-}(\mathcal{C}\# H)$. Then, $\mathcal{M}$ is said to be $H$-locally finite if the $H$-module $\mathcal{M}(X)$ is $H$-locally finite, i.e. $\mathcal{M}(X)^{(H)}=\mathcal{M}(X)$, for each $X \in \text{Ob}(\mathcal{C})$. The full subcategory of $Mod$-$(\mathcal{C}\# H)$ whose objects are $H$-locally finite right $(\mathcal{C}\# H)$-modules will be denoted by $mod$-$(\mathcal{C}\# H)$. 
\end{itemize}
\end{definition} 

If $M$, $M'\in H$-$Mod$, we know that $H$ acts diagonally on their tensor product $M\otimes M'$ over $K$, i.e., 
$h(m\otimes m')=\sum h_1m\otimes h_2m'$ for $h\in H$, $m\in M$ and $m'\in M'$. In particular, if $M$, $M'\in H$-$mod$, it follows
that $M\otimes M'\in H$-$mod$. 
Accordingly, if $\mathcal{N}\in mod\text{-}(\mathcal{C}\# H)$ and $M\in H$-$mod$, it is clear from the definition 
in Lemma \ref{tenpro} that  $M\otimes \mathcal{N}\in mod\text{-}(\mathcal{C}\# H)$.

\begin{corollary}\label{lem 3.5}
Let $\mathcal{N} \in mod\text{-}(\mathcal{C}\#H)$. Then, the functor $\mathcal{L}_{Mod\text{-}\mathcal{C}}(\mathcal{N},{-}):mod\text{-}(\mathcal{C}\# H) \longrightarrow H$-$mod$ is right adjoint to the functor $({-})\otimes \mathcal{N}:H$-$mod \longrightarrow mod\text{-}(\mathcal{C}\# H)$, i.e., we  have natural isomorphisms
\begin{equation*}
Hom_{mod\text{-}(\mathcal{C}\#H)}(M \otimes \mathcal{N},\mathcal{P}) \cong Hom_{H\text{-}mod}\big(M,\mathcal{L}_{Mod\text{-}\mathcal{C}}(\mathcal{N},\mathcal{P})\big)
\end{equation*}
for all $\mathcal{P}\in mod\text{-}(\mathcal{C}\#H)$ and $M\in H$-$mod$.
\end{corollary}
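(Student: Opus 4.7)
The plan is to deduce Corollary \ref{lem 3.5} directly from Proposition \ref{lem 3.4} by a restriction-of-subcategories argument; no new calculation of the adjunction map is required. The heart of the matter is to check that both sides of the isomorphism in Proposition \ref{lem 3.4} actually land in (or come from) the restricted subcategories $mod\text{-}(\mathcal{C}\#H)$ and $H\text{-}mod$.

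First, I would verify that the functors are well-defined on the claimed domains and codomains. For the left adjoint $(-)\otimes\mathcal{N}$, I would recall the remark stated just before the corollary: if $M\in H\text{-}mod$ and $\mathcal{N}\in mod\text{-}(\mathcal{C}\#H)$, then the diagonal $H$-action on $M\otimes\mathcal{N}(X)$ realizes $M\otimes\mathcal{N}(X)$ as a tensor product in $H\text{-}mod$ (a tensor product of $H$-locally finite modules is $H$-locally finite because $H(m\otimes n)\subseteq Hm\otimes Hn$ is finite dimensional). Hence $M\otimes\mathcal{N}\in mod\text{-}(\mathcal{C}\#H)$, and $(-)\otimes\mathcal{N}$ restricts to a functor $H\text{-}mod\longrightarrow mod\text{-}(\mathcal{C}\#H)$. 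For the right adjoint $\mathcal{L}_{Mod\text{-}\mathcal{C}}(\mathcal{N},-)$, the codomain $H\text{-}mod$ is built into the definition, since $\mathcal{L}_{Mod\text{-}\mathcal{C}}(\mathcal{N},\mathcal{P})=Hom_{Mod\text{-}\mathcal{C}}(\mathcal{N},\mathcal{P})^{(H)}$ is by construction $H$-locally finite.

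Next, I would use that $mod\text{-}(\mathcal{C}\#H)$ is a \emph{full} subcategory of $Mod\text{-}(\mathcal{C}\#H)$. Therefore for $M\otimes\mathcal{N},\mathcal{P}\in mod\text{-}(\mathcal{C}\#H)$ we have the tautological identification
\begin{equation*}
Hom_{mod\text{-}(\mathcal{C}\#H)}(M\otimes\mathcal{N},\mathcal{P})=Hom_{Mod\text{-}(\mathcal{C}\#H)}(M\otimes\mathcal{N},\mathcal{P}).
\end{equation*}
Combining this with the adjunction isomorphism already established in Proposition \ref{lem 3.4}, we obtain
\begin{equation*}
Hom_{mod\text{-}(\mathcal{C}\#H)}(M\otimes\mathcal{N},\mathcal{P})\cong Hom_{H\text{-}mod}\bigl(M,\mathcal{L}_{Mod\text{-}\mathcal{C}}(\mathcal{N},\mathcal{P})\bigr),
\end{equation*}
and this isomorphism inherits naturality in both $M$ and $\mathcal{P}$ from Proposition \ref{lem 3.4}.

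There is no real obstacle here; the only mild subtlety is confirming that the isomorphism of Proposition \ref{lem 3.4}, initially stated for $\mathcal{P}\in Mod\text{-}(\mathcal{C}\#H)$, still works when $\mathcal{P}$ is only allowed to vary in the subcategory $mod\text{-}(\mathcal{C}\#H)$. This is immediate because fullness of the inclusion means no morphisms are discarded on either side. Thus the restriction of the adjoint pair $\bigl((-)\otimes\mathcal{N},\mathcal{L}_{Mod\text{-}\mathcal{C}}(\mathcal{N},-)\bigr)$ to the appropriate full subcategories remains an adjoint pair, which is the statement of the corollary.
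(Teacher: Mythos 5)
Your proposal is correct and takes essentially the same route as the paper, whose entire proof is the observation that the claim follows from Proposition \ref{lem 3.4} because $mod\text{-}(\mathcal{C}\#H)$ is a full subcategory of $Mod\text{-}(\mathcal{C}\#H)$. You simply spell out the two routine checks (that $(-)\otimes\mathcal{N}$ restricts to the locally finite subcategories and that fullness identifies the Hom-sets) which the paper leaves implicit.
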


\begin{proof}
This follows from Proposition \ref{lem 3.4} because $mod$-$(\mathcal C\# H)$ is a full subcategory 
of 
$Mod$-$(\mathcal C\# H)$. 
\end{proof}

\begin{lemma}\label{lem4.9x} Let $\mathcal C$ be a left $H$-category. Given $X\in Ob(\mathcal C)$, consider
the representable functor ${\bf{h}}_X\in Mod$-$\mathcal C$. Then,  
 the right $\mathcal C$-module ${\bf{h}}_X$ is also a right $(\mathcal{C}\#H)$-module.

\end{lemma}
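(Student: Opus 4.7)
The plan is to use the one-to-one correspondence established in Proposition \ref{2.3} between right $(\mathcal{C}\#H)$-modules and left $H$-equivariant right $\mathcal{C}$-modules. Thus it suffices to endow ${\bf h}_X$ with the structure of a left $H$-equivariant right $\mathcal{C}$-module in the sense of Definition \ref{Dx3.2}.

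For each $Y \in Ob(\mathcal{C})$, the space ${\bf h}_X(Y) = Hom_{\mathcal{C}}(Y,X)$ already carries a left $H$-module structure because $\mathcal{C}$ is a left $H$-category. It only remains to verify the $H$-equivariance of the right $\mathcal{C}$-action on ${\bf h}_X$. For any morphism $f: Y \longrightarrow Z$ in $\mathcal{C}$ and any $g \in {\bf h}_X(Z) = Hom_{\mathcal{C}}(Z, X)$, the right $\mathcal{C}$-module structure on ${\bf h}_X$ is given by ${\bf h}_X(f)(g) = g \circ f$. Applying $h \in H$ and using the compatibility axiom of Definition \ref{defH-cat}(iii) for the composable pair $f: Y \longrightarrow Z$ and $g: Z \longrightarrow X$, I would compute
\begin{equation*}
h\big({\bf h}_X(f)(g)\big) = h(g \circ f) = \sum h_1(g) \circ h_2(f) = \sum {\bf h}_X(h_2 f)(h_1 g),
\end{equation*}
which is exactly the $H$-equivariance condition of Definition \ref{Dx3.2}.

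Since ${\bf h}_X$ is therefore a left $H$-equivariant right $\mathcal{C}$-module, Proposition \ref{2.3} produces a corresponding right $(\mathcal{C}\#H)$-module whose underlying right $\mathcal{C}$-module is ${\bf h}_X$, completing the argument. There is no real obstacle here: the only subtlety is keeping track of the variance (${\bf h}_X$ is contravariant on $\mathcal{C}$) so that the axiom of Definition \ref{defH-cat}(iii) is applied to the correct composition, and this is routine.
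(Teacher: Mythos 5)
Your proof is correct and follows essentially the same route as the paper: both endow ${\bf h}_X(Y)=Hom_{\mathcal C}(Y,X)$ with its given left $H$-module structure, verify the equivariance condition of Definition \ref{Dx3.2} via the composition axiom of Definition \ref{defH-cat}(iii), and then invoke the correspondence between $H$-equivariant $\mathcal C$-modules and $(\mathcal C\#H)$-modules (you cite Proposition \ref{2.3}, the paper cites Proposition \ref{equicat}, which rests on it). The computation $h(gf)=\sum h_1(g)h_2(f)=\sum {\bf h}_X(h_2f)(h_1g)$ is exactly the one in the paper.
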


\begin{proof} For each $Y\in Ob(\mathcal{C})$, we have ${\bf h}_X(Y)= Hom_{\mathcal{C}}(Y,X)$. Since $\mathcal{C}$ is a left $H$-category, ${\bf h}_X(Y)$ has a left $H$-module structure. For any $f\in Hom_\mathcal{C}(Z,Y)$, $g \in {\bf h}_X(Y)$ and $h \in H$, we have
$$ h\left({\bf h}_X(f)(g)\right)= h(gf)= \sum h_1(g)h_2(f)=\sum{\bf h}_X(h_2f)(h_1g)
$$
Thus, ${\bf h}_X$ is a left $H$-equivariant right $\mathcal{C}$-module. Hence, ${\bf h}_X\in Mod\text{-}(\mathcal{C}\#H)$ by Proposition \ref{equicat}.
\end{proof}

\begin{lemma}\label{inj}
(1) If   $\mathcal{I}$ is an injective in $Mod$-$(\mathcal C\# H)$, then $\mathcal{L}_{Mod\text{-}\mathcal{C}}(\mathcal{N},\mathcal{I})$ is an injective in $H$-$mod$ for any $\mathcal{N} \in Mod$-$(\mathcal C\# H)$.

\smallskip
(2) If $\mathcal{I}$ is an injective in $mod\text{-}(\mathcal{C}\#H)$, then 
\begin{itemize} 
\item[(i)] $\mathcal{L}_{Mod\text{-}\mathcal{C}}(\mathcal{N},\mathcal{I})$ is an injective in $H$-$mod$ for any $\mathcal{N} \in mod\text{-}(\mathcal{C}\#H)$.
\item[(ii)] Let $\mathcal{C}$ be $H$-locally finite. Then, for each $X \in \text{Ob}(\mathcal C \#H)$, $\mathcal{I}(X)$ is an injective in $H$-$mod$.
\end{itemize}

\end{lemma}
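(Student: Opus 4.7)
The plan is to exploit the adjunctions in Proposition \ref{lem 3.4} and Corollary \ref{lem 3.5}: a right adjoint to an exact functor always preserves injectives, so the task reduces to verifying exactness of the tensor functor $(-)\otimes\mathcal{N}$ in each setting, and then, for part (2)(ii), to representing the evaluation $\mathcal{I}(X)$ as $\mathcal{L}_{Mod\text{-}\mathcal{C}}(\mathbf{h}_X,\mathcal{I})$ via Yoneda.

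For part (1), I would take $\mathcal{N}\in Mod\text{-}(\mathcal{C}\#H)$ and use Proposition \ref{lem 3.4} to view $\mathcal{L}_{Mod\text{-}\mathcal{C}}(\mathcal{N},-)$ as a right adjoint to $(-)\otimes\mathcal{N}:H\text{-}mod\longrightarrow Mod\text{-}(\mathcal{C}\#H)$. Being a left adjoint, $(-)\otimes\mathcal{N}$ is right exact. For left exactness, I would argue that if $M_1\hookrightarrow M_2$ is a monomorphism in $H$-$mod$, then since the $K$-tensor product is exact and the construction in Lemma \ref{tenpro} is done objectwise as $(M_i\otimes\mathcal{N})(X)=M_i\otimes\mathcal{N}(X)$, the induced map $M_1\otimes\mathcal{N}\longrightarrow M_2\otimes\mathcal{N}$ is objectwise injective, hence a monomorphism in $Mod\text{-}(\mathcal{C}\#H)$. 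Therefore $(-)\otimes\mathcal{N}$ is exact and its right adjoint $\mathcal{L}_{Mod\text{-}\mathcal{C}}(\mathcal{N},-)$ carries injectives to injectives. Part (2)(i) then follows by the identical argument applied to the adjunction of Corollary \ref{lem 3.5}, using that $M\otimes\mathcal{N}\in mod\text{-}(\mathcal{C}\#H)$ whenever $M\in H\text{-}mod$ and $\mathcal{N}\in mod\text{-}(\mathcal{C}\#H)$, as already noted before Corollary \ref{lem 3.5}.

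For part (2)(ii), the key idea is Yoneda: for any $\mathcal{I}\in Mod\text{-}(\mathcal{C}\#H)$ and any $X\in Ob(\mathcal{C})$, the map $\eta\mapsto\eta(X)(\mathrm{id}_X)$ gives a $K$-linear isomorphism $Hom_{Mod\text{-}\mathcal{C}}(\mathbf{h}_X,\mathcal{I})\cong\mathcal{I}(X)$. I would verify that this isomorphism is $H$-linear with respect to the $H$-action \eqref{3xee3/1} on the left-hand side and the given $H$-module structure on $\mathcal{I}(X)$; the verification reduces at once to the identity $S(h_2)\mathrm{id}_X=\varepsilon(h_2)\mathrm{id}_X$ coming from Definition \ref{defH-cat}(ii) and $\varepsilon\circ S=\varepsilon$. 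Now assume $\mathcal{C}$ is $H$-locally finite and $\mathcal{I}\in mod\text{-}(\mathcal{C}\#H)$. Then $\mathbf{h}_X(Y)=Hom_{\mathcal{C}}(Y,X)$ is $H$-locally finite for every $Y$ by hypothesis, so by Lemma \ref{lem4.9x} we have $\mathbf{h}_X\in mod\text{-}(\mathcal{C}\#H)$. Since $\mathcal{I}(X)$ is $H$-locally finite, the above Yoneda isomorphism identifies $\mathcal{L}_{Mod\text{-}\mathcal{C}}(\mathbf{h}_X,\mathcal{I})=Hom_{Mod\text{-}\mathcal{C}}(\mathbf{h}_X,\mathcal{I})^{(H)}$ with $\mathcal{I}(X)^{(H)}=\mathcal{I}(X)$ as $H$-modules. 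Applying part (2)(i) to the pair $(\mathbf{h}_X,\mathcal{I})$, the left-hand side is injective in $H$-$mod$, and hence so is $\mathcal{I}(X)$.

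The only subtle point I anticipate is the careful check that the Yoneda isomorphism is $H$-linear with the action in \eqref{3xee3/1}; everything else is formal adjunction/exactness bookkeeping. Once that is in hand, (2)(ii) is reduced cleanly to (2)(i) via the representable $\mathbf{h}_X$.
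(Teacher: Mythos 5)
Your proposal is correct and follows essentially the same route as the paper: parts (1) and (2)(i) via the adjunctions of Proposition \ref{lem 3.4} and Corollary \ref{lem 3.5} together with the objectwise exactness of $(-)\otimes\mathcal{N}$, and part (2)(ii) by identifying $\mathcal{I}(X)$ with $\mathcal{L}_{Mod\text{-}\mathcal{C}}(\mathbf{h}_X,\mathcal{I})$ through Yoneda after noting $\mathbf{h}_X\in mod\text{-}(\mathcal{C}\#H)$ when $\mathcal{C}$ is $H$-locally finite. Your explicit check that the Yoneda isomorphism is $H$-linear (reducing to $S(h_2)\mathrm{id}_X=\varepsilon(h_2)\mathrm{id}_X$) is a detail the paper leaves implicit, but the argument is the same.
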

\begin{proof}
(1)  The functor $\mathcal{L}_{Mod\text{-}\mathcal{C}}(\mathcal{N},{-}):Mod\text{-}(\mathcal{C}\# H) \longrightarrow H$-$mod$ is right adjoint to the functor $(-) \otimes \mathcal{N}:H\text{-}mod \longrightarrow Mod\text{-}(\mathcal{C} \# H)$ by Proposition \ref{lem 3.4}. Further, the functor $(-) \otimes \mathcal{N}$ always preserves monomorphisms (see the proof of Corollary \ref{2.8}). The result now follows from \cite[Tag 015Y]{SP}.

\smallskip
(2)  The proof of (i) is exactly the  same as that of (1) except that we use Corollary \ref{lem 3.5} in place of Proposition \ref{lem 3.4}.
To prove (ii), we consider for each $X\in Ob(\mathcal C)$ the representable functor ${\bf h}_X\in Mod$-$\mathcal C$. 
Using Lemma \ref{lem4.9x}, we know that ${\bf{h}}_X \in Mod$-$(\mathcal{C}\#H)$. Further,  since $\mathcal{C}$ is $H$-locally finite, we see that ${\bf{h}}_X \in mod$-$(\mathcal{C}\#H)$. Using (i), we have $\mathcal{L}_{Mod\text{-}\mathcal{C}}({\bf{h}}_X,\mathcal{I})$ is injective in $H$-mod. Finally, by Yoneda lemma, we have $\mathcal{L}_{Mod\text{-}\mathcal{C}}({\bf{h}}_X,\mathcal{I})=Hom_{Mod\text{-}\mathcal C}({\bf h}_X,
\mathcal I)^{(H)}=\mathcal I(X)^{(H)}=\mathcal{I}(X)$. 
\end{proof}

\begin{lemma}
Let $\mathcal{C}$ be an $H$-locally finite category. Then, for any $\mathcal{M}$ in $Mod$-$(\mathcal C\# H)$, we may obtain an object $\mathcal{M}^{(H)} \in mod\text{-}(\mathcal{C}\#H)$ by setting
\begin{equation*}
\begin{array}{c}
\mathcal{M}^{(H)}(X):=\mathcal M(X)^{(H)}=\{m \in \mathcal{M}(X)~ |~ Hm\ \text{is finite dimensional}\}\\
\mathcal{M}^{(H)}(f\#h)(m):=\mathcal M(f\# h)(m)
\end{array}
\end{equation*}
for any $f \# h \in Hom_{\mathcal{C}\#H}(Y,X)$ and $m \in \mathcal{M}^{(H)}(X)$.
\end{lemma}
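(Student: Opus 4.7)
\medskip

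\textbf{Proof proposal.} The plan is to verify three things in order: (i) that for every morphism $f\#h\in Hom_{\mathcal{C}\# H}(Y,X)$ the map $\mathcal{M}(f\# h)$ restricts to $\mathcal{M}(X)^{(H)}\to\mathcal{M}(Y)^{(H)}$ (so that $\mathcal{M}^{(H)}(f\# h)$ is well-defined); (ii) that the resulting assignment is functorial, i.e.\ yields an object of $Mod\text{-}(\mathcal{C}\#H)$; and (iii) that this object actually lies in the full subcategory $mod\text{-}(\mathcal{C}\#H)$. Parts (ii) and (iii) will be almost immediate from the construction once (i) is settled: functoriality is inherited verbatim from $\mathcal{M}$, and $H$-local finiteness of $\mathcal{M}^{(H)}(X)=\mathcal{M}(X)^{(H)}$ holds by the very definition of $(-)^{(H)}$.

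The only substantive step is (i). I fix $m\in\mathcal{M}(X)^{(H)}$, so $Hm$ is finite dimensional, and aim to show that $H\cdot\mathcal{M}(f\# h)(m)$ is finite dimensional. From \eqref{weq2.1} we have $\mathcal{M}(f\# h)(m)=S^{-1}(h)\mathcal{M}(f)(m)$, so this element lies in $H\mathcal{M}(f)(m)$; hence it is enough to prove that $H\mathcal{M}(f)(m)$ is finite dimensional. Applying the $H$-equivariance identity of Definition \ref{Dx3.2} to $h'\in H$,
$$
h'\mathcal{M}(f)(m)=\sum \mathcal{M}(h'_2 f)(h'_1 m).
$$
By hypothesis $Hm$ is finite dimensional, and because $\mathcal{C}$ is $H$-locally finite, $Hf$ is also finite dimensional. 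Therefore $H\mathcal{M}(f)(m)$ is contained in the image of the evaluation map $Hf\otimes Hm\longrightarrow\mathcal{M}(Y)$, which has finite-dimensional source. Consequently $\mathcal{M}(f\# h)(m)\in\mathcal{M}(Y)^{(H)}$, and the restriction $\mathcal{M}^{(H)}(f\# h):\mathcal{M}^{(H)}(X)\to\mathcal{M}^{(H)}(Y)$ is well-defined.

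With (i) in hand, the identities $\mathcal{M}^{(H)}\big((f'\# h')\circ(f\# h)\big)=\mathcal{M}^{(H)}(f\# h)\circ\mathcal{M}^{(H)}(f'\# h')$ and $\mathcal{M}^{(H)}(id_X\# 1_H)=id$ follow by restriction from the corresponding identities for $\mathcal{M}$, so $\mathcal{M}^{(H)}\in Mod\text{-}(\mathcal{C}\# H)$. Since each space $\mathcal{M}^{(H)}(X)=\mathcal{M}(X)^{(H)}$ is $H$-locally finite, $\mathcal{M}^{(H)}$ belongs to $mod\text{-}(\mathcal{C}\# H)$. There is no real obstacle here; the only point requiring care is to use both finiteness hypotheses—$H$-local finiteness of the morphism space $Hom_{\mathcal{C}}(Y,X)$ and finite dimensionality of $Hm$—simultaneously in the key computation above.
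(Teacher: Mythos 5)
Your proof is correct and follows essentially the same route as the paper: both reduce to showing that $h'\mathcal{M}(f\#h)(m)$ lies in the span of elements $\mathcal{M}(g)(n)$ with $g\in Hf$ and $n\in Hm$, using the $H$-equivariance identity together with $H$-local finiteness of $Hom_{\mathcal{C}}(Y,X)$ and of $Hm$. The only (harmless) cosmetic difference is that you first absorb $S^{-1}(h)$ into $H$ and then expand $h'\mathcal{M}(f)(m)$, whereas the paper expands $h'S^{-1}(h)\mathcal{M}(f)(m)$ in one step.
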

\begin{proof}
We need to verify that $\mathcal M(f\# h):\mathcal M(X)\longrightarrow \mathcal M(Y)$ restricts 
to a map $\mathcal M(X)^{(H)}\longrightarrow \mathcal M(Y)^{(H)}$. For this, we consider $m\in \mathcal M(X)^{(H)}$. Since
$\mathcal M\in Mod$-$(\mathcal C\# H)$ may be treated as a left $H$-equivariant right $\mathcal C$-module as in 
Proposition \ref{2.3}, we obtain
\begin{equation}\label{eq4.1zz}
h'\mathcal M(f\# h)(m)=h'S^{-1}(h)(\mathcal M(f)(m))=\sum \mathcal M(h'_2S^{-1}(h_1)f)(h'_1S^{-1}(h_2)m)
\end{equation} for any $h'\in H$.  Since the category $\mathcal C$ is $H$-locally finite and $m\in \mathcal M^{(H)}(X)$, it is clear 
from \eqref{eq4.1zz} that $\mathcal M(f\# h)(m)\in \mathcal M(Y)^{(H)}=\mathcal M^{(H)}(Y)$. This proves the result. 
\end{proof}

\begin{prop}\label{adj}~
\begin{itemize}
\item  [(1)] Let $({-})^{(H)}: H{\text{-}}Mod \longrightarrow H{\text{-}}mod$ be the functor $N\mapsto N^{(H)}$. Then $({-})^{(H)}$ is right adjoint to the forgetful functor from the category $H{\text{-}}mod $ to the category $H{\text{-}}Mod $, i.e.,
 we have natural isomorphisms
\begin{equation*}
 Hom_{H{\text{-}}Mod}(M,N)\cong Hom_{H{\text{-}}mod}(M,N^{(H)}) 
\end{equation*}
for any $M \in H$-$mod$ and $N\in H$-$Mod$.
\item [(2)] Let $({-})^{(H)}: Mod\text{-}(\mathcal{C}\#H) \longrightarrow mod\text{-}(\mathcal{C}\#H)$ be the functor $\mathcal{N} \mapsto \mathcal{N}^{(H)}$. Then $({-})^{(H)}$ is right adjoint to the forgetful functor from the category $mod\text{-}(\mathcal{C}\#H)$ to the category $Mod\text{-}(\mathcal{C}\#H)$, i.e., we have natural isomorphisms
\begin{equation*}
Hom_{Mod\text{-}(\mathcal{C}\#H)}(\mathcal{M},\mathcal{N})\cong Hom_{mod\text{-}(\mathcal{C}\#H)}(\mathcal{M},\mathcal{N}^{(H)}) 
\end{equation*}
for any $\mathcal{M} \in mod$-$(\mathcal C\# H)$ and $\mathcal N\in Mod$-$(\mathcal C\# H)$.
\end{itemize}
\end{prop}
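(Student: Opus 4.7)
The plan is to exhibit explicit inverse bijections implementing each adjunction. The key observation driving both parts is that an $H$-linear map whose domain is $H$-locally finite automatically factors through the $H$-locally finite submodule of the codomain.

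For part (1), given $f \in Hom_{H\text{-}Mod}(M,N)$ with $M \in H\text{-}mod$, I would observe that for any $m \in M$ the submodule $Hm \subseteq M$ is finite dimensional, hence its image $f(Hm) = Hf(m)$ is finite dimensional, so $f(m) \in N^{(H)}$. Thus $f$ factors uniquely through the inclusion $N^{(H)} \hookrightarrow N$, yielding an $H$-linear map $\bar f: M \to N^{(H)}$. The assignment $f \mapsto \bar f$ is the candidate bijection, with inverse given by post-composition with the inclusion $N^{(H)} \hookrightarrow N$. Naturality in $M$ and $N$ is immediate from the functoriality of these constructions.

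For part (2), I proceed objectwise using the same idea. Given a morphism $\eta: \mathcal{M} \to \mathcal{N}$ in $Mod\text{-}(\mathcal{C}\#H)$ with $\mathcal{M} \in mod\text{-}(\mathcal{C}\#H)$, each component $\eta(X)$ is $H$-linear by Lemma \ref{2.2}. Since $\mathcal{M}(X) = \mathcal{M}(X)^{(H)}$, applying the argument of part (1) to $\eta(X)$ gives $\eta(X)(\mathcal{M}(X)) \subseteq \mathcal{N}(X)^{(H)} = \mathcal{N}^{(H)}(X)$, so $\eta$ factors through a natural transformation $\bar\eta: \mathcal{M} \to \mathcal{N}^{(H)}$ at the level of underlying $K$-linear functors. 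The preceding lemma shows that $\mathcal{N}^{(H)}$ sits inside $\mathcal{N}$ as a subobject in $Mod\text{-}(\mathcal{C}\#H)$ with its $(\mathcal{C}\#H)$-action obtained by restriction; consequently $\bar\eta$ respects the $(\mathcal{C}\#H)$-action because $\eta$ does, and so is a morphism in $mod\text{-}(\mathcal{C}\#H)$. The inverse assignment is again post-composition with the inclusion $\mathcal{N}^{(H)} \hookrightarrow \mathcal{N}$.

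The argument is essentially formal once the candidate maps are in place. The only point that warrants real attention is ensuring that in part (2) the subspace $\mathcal{N}^{(H)}(X) \subseteq \mathcal{N}(X)$ is stable under every structure morphism $\mathcal{N}(f \# h)$, which is precisely the content of the preceding lemma; both this stability and the fact that $\bar\eta$ intertwines the $(\mathcal{C}\#H)$-action then flow automatically from the corresponding properties of $\mathcal{N}$ and $\eta$. Naturality of the resulting isomorphism in both variables is then routine.
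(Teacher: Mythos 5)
Your proposal is correct and follows the same route as the paper: both parts rest on the observation that an $H$-linear map out of a locally finite module lands in $N^{(H)}$ (since $f(Hm)=Hf(m)$ is finite dimensional), with part (2) handled objectwise via the $H$-linearity of the components $\eta(X)$ from Lemma \ref{2.2} and the preceding lemma guaranteeing that $\mathcal{N}^{(H)}$ is a $(\mathcal{C}\#H)$-submodule. Your write-up is in fact more explicit than the paper's about why $\bar\eta$ respects the $(\mathcal{C}\#H)$-action and about the inverse bijections, but the substance is identical.
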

\begin{proof}

(1) Given any $M \in H$-$mod$, $N \in H$-$Mod$ and an $H$-module morphism $\phi:M \longrightarrow N$, it is clear that $\phi(m) \in N^{(H)}$ for all $m \in M$.

\smallskip
(2) Let  $\mathcal{M} \in mod\text{-}(\mathcal{C}\#H)$. By Lemma \ref{2.2}, a morphism $\eta:\mathcal{M} \longrightarrow \mathcal{N}$ in $Mod$-$(\mathcal C\# H)$ induces $H$-linear morphisms $\eta(X):\mathcal{M}(X)\longrightarrow \mathcal{N}(X)$  for each
$X\in Ob(\mathcal C)$. Since $\mathcal M(X)$ is $H$-locally finite, each $\eta(X)$ can be written as a morphism 
$\mathcal M(X)\longrightarrow\mathcal N(X)^{(H)}$. The result is now clear. 
\end{proof}

\begin{lemma} Let $\mathcal{C}$ be $H$-locally finite. Then,
\begin{itemize}
\item[(1)] The category $mod\text{-}(\mathcal{C}\#H)$ is abelian.
\item[(2)] If $\mathcal{I}$ is an injective in $Mod\text{-}(\mathcal{C}\#H)$, then $\mathcal{I}^{(H)}$ is an injective in $mod\text{-}(\mathcal{C}\#H)$.
\item[(3)] The category $mod\text{-}(\mathcal{C}\#H)$ has enough injectives.
\end{itemize}
\end{lemma}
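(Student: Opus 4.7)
The plan is to prove the three parts in order, exploiting the adjunction of Proposition~\ref{adj}(2) together with the Grothendieck property of $Mod\text{-}(\mathcal{C}\#H)$ from Proposition~\ref{equicat}. The preceding lemma, which crucially uses the $H$-local finiteness of $\mathcal{C}$, guarantees that $\mathcal{M}\mapsto \mathcal{M}^{(H)}$ lands in $mod\text{-}(\mathcal{C}\#H)$ and so is available as the right adjoint to the forgetful embedding $\iota: mod\text{-}(\mathcal{C}\#H)\hookrightarrow Mod\text{-}(\mathcal{C}\#H)$.

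For (1), I will verify that $mod\text{-}(\mathcal{C}\#H)$ is closed under kernels and cokernels inside $Mod\text{-}(\mathcal{C}\#H)$. Since kernels and cokernels there are computed pointwise on $Ob(\mathcal{C})$, it suffices to observe two elementary facts about $H$-modules: every $H$-submodule of an $H$-locally finite module is $H$-locally finite, and every quotient is as well (the span $H\bar m$ in $M/N$ is the image of $Hm$ and hence finite dimensional). Closure under kernels and cokernels immediately makes $mod\text{-}(\mathcal{C}\#H)$ abelian with the same images, and shows that the inclusion $\iota$ is exact.

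For (2), since $\iota$ is an exact left adjoint to $(-)^{(H)}$, standard abstract nonsense yields that the right adjoint $(-)^{(H)}$ preserves injectives; hence $\mathcal{I}^{(H)}$ is injective in $mod\text{-}(\mathcal{C}\#H)$ whenever $\mathcal{I}$ is injective in $Mod\text{-}(\mathcal{C}\#H)$. For (3), given $\mathcal{M}\in mod\text{-}(\mathcal{C}\#H)$, I would choose an embedding $\mathcal{M}\hookrightarrow \mathcal{I}$ into an injective of $Mod\text{-}(\mathcal{C}\#H)$, available since the latter is Grothendieck. Because $\mathcal{M}$ is already $H$-locally finite, this embedding factors objectwise through $\mathcal{I}(X)^{(H)}$, yielding a monomorphism $\mathcal{M}\hookrightarrow \mathcal{I}^{(H)}$ in $mod\text{-}(\mathcal{C}\#H)$; by (2) its target is injective, so there are enough injectives. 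The only delicate point is part (1): one must confirm that $H$-local finiteness is stable under the subobject and quotient operations yielding kernels and cokernels, but this is elementary once the pointwise description of these constructions in $Mod\text{-}(\mathcal{C}\#H)$ is used.
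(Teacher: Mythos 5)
Your proposal is correct and follows essentially the same route as the paper: closure under pointwise kernels and cokernels via stability of $H$-local finiteness under submodules and quotients for (1), the adjunction of Proposition~\ref{adj}(2) together with the fact that the left adjoint preserves monomorphisms for (2), and embedding into an injective of the ambient Grothendieck category $Mod\text{-}(\mathcal{C}\#H)$ and factoring through $\mathcal{I}^{(H)}$ for (3). The only detail the paper records that you omit in (1) is closure under finite (bi)products, which is equally elementary and needed to conclude the subcategory is abelian.
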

\begin{proof}
(1) Since $H$-$mod$ is closed under kernels and cokernels, it is clear that  the subcategory $mod\text{-}(\mathcal{C}\#H)$ of the abelian category $Mod$-$(\mathcal C\# H)$ is closed under kernels and cokernels. Also, since products and coproducts of finitely many objects $\mathcal{M}_i$ in $mod\text{-}(\mathcal{C}\#H)$ are given by
$$(\prod \mathcal{M}_i)(X) = (\coprod \mathcal{M}_i)(X)= \bigoplus_i \mathcal{M}_i(X)$$ for $X\in Ob(\mathcal{C}\#H)$,  it follows that finite products and coproducts exist and coincide in  $mod\text{-}(\mathcal{C}\#H)$.  Thus, the category $mod\text{-}(\mathcal{C}\#H)$ is abelian.

\smallskip
(2) Since the functor $({-})^{(H)}$ is right adjoint to the forgetful functor in Proposition \ref{adj}(2) and the forgetful functor always preserves monomorphisms, this result follows from \cite[Tag 015Y]{SP}.

\smallskip
(3) Since $Mod$-$(\mathcal C\# H)$ is a Grothendieck category, it has enough injectives. The result is now clear  from (2). 
\end{proof}

We now recall the notions of free, finitely generated and noetherian modules over a category from \cite[$\S$ 3]{Mit1} and \cite{Mit2}. Given $\mathcal M\in Mod$-$\mathcal C$, we set $el(\mathcal M):=\underset{X\in Ob(\mathcal C)}{\coprod}
\mathcal M(X)$ to be the collection of all elements of $\mathcal M$. If $m\in el(\mathcal M)$ lies in $\mathcal M(X)$, we write
$|m|=X$. 
 
\begin{definition}
Let $\mathcal{C}$ be a small preadditive category and let $\mathcal{M}\in Mod$-$\mathcal C$.
\begin{itemize}
\item[(i)] A family of elements $\{m_i \in el(\mathcal M)\}_{i\in I}$ is said to generate $\mathcal{M}$ if every element $y\in el(\mathcal M)$  can be expressed as $y=\sum_{i\in I} \mathcal{M}(f_i)(m_i)$ for some $f_i \in Hom_\mathcal{C}(|y|,|m_i|)$, where
all but a finite number of the $f_i$ are zero. Equivalently, the
family $\{m_i \in el(\mathcal M)\}_{i\in I}$  is said to generate $\mathcal M$ if the induced morphism $$ \eta: \bigoplus_{i\in I} {\bf h}_{|m_i|} \longrightarrow \mathcal{M}$$ which takes $(0,...,0,id_{|m_i|},0,...,0)$ to $m_i$ is an epimorphism. The family is said to be a basis for $\mathcal{M}$ if $\eta$ is an isomorphism. The module $\mathcal{M}$ is said to be finitely generated (resp. free)  if it has a finite set of generators (resp. a basis). 

\item[(ii)] The module $\mathcal{M}$ is called noetherian if it satisfies the ascending chain condition on submodules.  The category $\mathcal{C}$ is said to be right noetherian if ${\bf h}_X \in Mod\text{-}\mathcal{C}$ is noetherian for each $X\in Ob(\mathcal{C})$.
\end{itemize} 
\end{definition}

\begin{prop}\label{fg}~  
Let $\mathcal{C}$ be a left $H$-category.  
An object $\mathcal{M}\in mod$-$(\mathcal{C}\#H)$ is finitely generated in $Mod$-$(\mathcal C\# H)$ if and only if there exists a finite dimensional $V \in H$-$Mod$ and an epimorphism $$ V \otimes\left(\bigoplus_{i=1}^n {\bf h}_{X_i} \right)\longrightarrow \mathcal{M}$$ in $Mod$-$(\mathcal C\# H)$ for finitely many objects $\{X_i\}_{1\leq i\leq n}$ in $\mathcal{C}$,  where each ${\bf h}_{X_i}$ is  viewed as an object in $Mod$-$(\mathcal{C}\#H)$.
\end{prop}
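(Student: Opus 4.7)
The plan is to prove the two implications separately, using the tensor product and representable functor constructions of Lemma \ref{tenpro} and Lemma \ref{lem4.9x}, together with the Hom-tensor adjunction of Proposition \ref{2.7} and the Yoneda lemma.

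For the sufficiency direction, I would suppose that an epimorphism $\pi: V \otimes (\bigoplus_{i=1}^n {\bf h}_{X_i}) \longrightarrow \mathcal{M}$ in $Mod\text{-}(\mathcal{C}\#H)$ is given, with $V$ finite dimensional. Fixing a finite $K$-basis $\{v_1,\dots,v_k\}$ of $V$, I claim that the finite family $\{v_j \otimes id_{X_i}\}_{1\leq j\leq k,\,1\leq i\leq n}$ already generates the domain as a right $\mathcal{C}$-module: any element $v\otimes f \in V \otimes Hom_\mathcal{C}(Y,X_i)$ can be expanded in the $v_j$, and $f = {\bf h}_{X_i}(f)(id_{X_i})$. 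Applying $\pi$ then yields a finite generating family $\{\pi(X_i)(v_j\otimes id_{X_i})\}_{j,i}$ for $\mathcal{M}$ in $Mod\text{-}(\mathcal{C}\#H)$.

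For the necessary direction, I would begin with a finite generating set $\{m_1,\dots,m_n\}$ of $\mathcal{M}$ with $m_i \in \mathcal{M}(X_i)$. Since $\mathcal{M}\in mod\text{-}(\mathcal{C}\#H)$, the $H$-submodule $V_i := Hm_i \subseteq \mathcal{M}(X_i)$ is finite dimensional; set $V := \bigoplus_{i=1}^n V_i$, which is a finite dimensional left $H$-module. A short computation using \eqref{3xee3/1} and the identity $h \cdot id_X = \varepsilon(h)\,id_X$ shows that the Yoneda isomorphism $Hom_{Mod\text{-}\mathcal{C}}({\bf h}_{X_i},\mathcal{M}) \cong \mathcal{M}(X_i)$ is $H$-equivariant. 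Hence the $H$-linear composite $V \twoheadrightarrow V_i \hookrightarrow \mathcal{M}(X_i)$ corresponds, via the adjunction of Proposition \ref{2.7}, to a morphism $\alpha_i: V \otimes {\bf h}_{X_i} \to \mathcal{M}$ in $Mod\text{-}(\mathcal{C}\#H)$ with $\alpha_i(X_i)(m_i \otimes id_{X_i}) = m_i$. Assembling the $\alpha_i$ produces
$$\pi: V \otimes \left(\bigoplus_{i=1}^n {\bf h}_{X_i}\right) \cong \bigoplus_{i=1}^n V \otimes {\bf h}_{X_i} \longrightarrow \mathcal{M},$$
whose image contains each $m_i$, and so must equal $\mathcal{M}$.

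The one point that needs genuine verification, rather than being a routine bookkeeping exercise, is the $H$-equivariance of the Yoneda identification: this is what upgrades the mere inclusion $V_i \hookrightarrow \mathcal{M}(X_i)$ into a morphism out of $V \otimes {\bf h}_{X_i}$ in $Mod\text{-}(\mathcal{C}\#H)$. Everything else is formal manipulation, with the source and target of each tensor product guaranteed to lie in $Mod\text{-}(\mathcal{C}\#H)$ by Lemma \ref{tenpro} and Lemma \ref{lem4.9x}.
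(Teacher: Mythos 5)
Your proof is correct, and its skeleton coincides with the paper's: both directions use the same module $V=\bigoplus_i Hm_i$, the same finite family of objects, and in the easy direction the same generators $\pi(X_i)(v_j\otimes id_{X_i})$. The only genuine divergence is in how the epimorphism is produced in the ``only if'' direction. The paper writes down an explicit formula, $hm_i\otimes(f_1,\dots,f_n)\mapsto \mathcal M(f_i\#h)(m_i)=S^{-1}(h)\mathcal M(f_i)(m_i)$, and asserts that it is a morphism in $Mod\text{-}(\mathcal C\#H)$ and an epimorphism. You instead transport the $H$-linear composite $V\twoheadrightarrow Hm_i\hookrightarrow\mathcal M(X_i)$ across an $H$-equivariant Yoneda identification $Hom_{Mod\text{-}\mathcal C}({\bf h}_{X_i},\mathcal M)\cong\mathcal M(X_i)$ (which is the $V=K$ case of the computation in Lemma \ref{3.10}) and through the adjunction of Proposition \ref{2.7}; this yields the map $v\otimes f\mapsto\mathcal M(f)(v)$, which is not literally the paper's map. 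What your route buys is that well-definedness and $(\mathcal C\#H)$-linearity come for free from the adjunction --- indeed the paper's formula, taken at face value, requires one to check that $\mathcal M(f\#h)(m_i)$ depends only on the element $hm_i\in Hm_i$ and not on the chosen $h$, a point your construction sidesteps entirely. The price is the extra (but short) verification that the Yoneda isomorphism is $H$-linear, and the closing observation that the image of your map is a $(\mathcal C\#H)$-submodule containing the generators $m_i$, hence all of $\mathcal M$; both steps are supplied in your sketch and are sound.
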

\begin{proof}

 Let $\mathcal{M}\in mod$-$(\mathcal{C}\#H)$ be finitely generated in $Mod$-$(\mathcal C\# H)$. We consider
 a finite generating family $\{m_i\in el(\mathcal M)\}_{1\leq i\leq n}$ for $\mathcal M$.  Since $\mathcal{M}\in mod$-$(\mathcal{C}\#H)$, each $\mathcal{M}(|m_i|)$ is $H$-locally finite and hence the $H$-module $V:=\underset{i=1}{\overset{n}{\bigoplus}} Hm_i$ is finite dimensional. For each $Y \in \text{Ob}(\mathcal{C}\#H)$, we consider the morphism determined by setting
 \begin{equation*}
\eta(Y): V\otimes \left(\underset{i=1}{\overset{n}{\bigoplus}} {\bf{h}}_{|m_i|}(Y) \right) \longrightarrow \mathcal{M}(Y) \qquad 
 hm_i \otimes (f_1,\dots,f_n) \mapsto  \mathcal{M}(f_i\# h)(m_i)
\end{equation*}
It is easy to check that $\eta$ is a morphism in $Mod$-$(\mathcal C\# H)$ and $\eta(Y)$ is an epimorphism for all $Y \in \text{Ob}(\mathcal{C}\#H)$. Conversely, let $\{v_1,\ldots,v_k\}$ be a basis of a finite dimensional $H$-module $V$ and $X_1,\ldots,X_n$ be finitely many objects in $\mathcal{C}$ such that there is an epimorphism  $$\eta: V \otimes\left(\bigoplus_{i=1}^n  {\bf{h}}_{X_i}\right) \longrightarrow \mathcal{M}$$ in $Mod$-$(\mathcal C\# H)$.  It may be verified that the elements 
$\{m_{ij}=\eta(X_i)\left(v_j \otimes \text{id}_{X_i}\right) \in \mathcal{M}(X_i)\}_{1\leq i\leq n, 1\leq j\leq k}$ give a family of generators for $\mathcal{M}$.
\end{proof}

\begin{corollary}\label{CHfgC}
An object $\mathcal{M}$ in $mod$-$(\mathcal{C}\#H)$ is finitely generated in $Mod$-$(\mathcal C\# H)$ if and only if $\mathcal{M}$ is finitely generated in $Mod$-$\mathcal C$.
\end{corollary}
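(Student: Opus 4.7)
The plan is to reduce both directions to Proposition \ref{fg}, which already characterizes finite generation in $Mod\text{-}(\mathcal{C}\# H)$ as the existence of an epimorphism from $V\otimes\bigoplus_{i=1}^n {\bf h}_{X_i}$ with $V$ finite dimensional over $H$. Once this is in hand, the two directions become short checks about representable modules.

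For the forward direction, suppose $\mathcal M\in mod\text{-}(\mathcal C\# H)$ is finitely generated in $Mod\text{-}(\mathcal C\# H)$. Proposition \ref{fg} furnishes finitely many $X_1,\dots,X_n\in Ob(\mathcal C)$, a finite dimensional $V\in H\text{-}Mod$, and an epimorphism $\eta:V\otimes\bigl(\bigoplus_{i=1}^n{\bf h}_{X_i}\bigr)\twoheadrightarrow \mathcal M$ in $Mod\text{-}(\mathcal C\# H)$. Forgetting the $H$-structure, I would observe from Lemma \ref{tenpro} that $(V\otimes\bigoplus_i{\bf h}_{X_i})(Y)=V\otimes\bigoplus_i Hom_{\mathcal C}(Y,X_i)$, with the right $\mathcal C$-action trivial on the $V$-factor. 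Choosing a $K$-basis $\{v_1,\dots,v_k\}$ of $V$, this gives an isomorphism of right $\mathcal C$-modules $V\otimes\bigoplus_{i=1}^n{\bf h}_{X_i}\cong\bigoplus_{j=1}^k\bigoplus_{i=1}^n{\bf h}_{X_i}$, a finite direct sum of representables. Composing this isomorphism with $\eta$ yields a finite epimorphism in $Mod\text{-}\mathcal C$, so $\mathcal M$ is finitely generated in $Mod\text{-}\mathcal C$.

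For the converse, suppose $\mathcal M$ is finitely generated in $Mod\text{-}\mathcal C$ by a family $\{m_1,\dots,m_n\}$ with $|m_i|=X_i$. Because $\mathcal M\in mod\text{-}(\mathcal C\# H)$, each $\mathcal M(X_i)$ is $H$-locally finite, so each $Hm_i$ is finite dimensional, and hence $V:=\bigoplus_{i=1}^n Hm_i$ is a finite dimensional $H$-module. I would then use the same formula as in the proof of Proposition \ref{fg}, namely
\begin{equation*}
\eta(Y):V\otimes\Bigl(\bigoplus_{i=1}^n{\bf h}_{X_i}(Y)\Bigr)\longrightarrow \mathcal M(Y),\qquad hm_i\otimes (f_1,\dots,f_n)\mapsto \mathcal M(f_i\# h)(m_i),
\end{equation*}
to define a morphism $\eta$ in $Mod\text{-}(\mathcal C\# H)$. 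Surjectivity of each $\eta(Y)$ is immediate from the hypothesis: any $y\in\mathcal M(Y)$ can be written as $y=\sum_i\mathcal M(g_i)(m_i)$ with $g_i\in Hom_{\mathcal C}(Y,X_i)$, and since $\mathcal M(g_i)(m_i)=\mathcal M(g_i\# 1_H)(m_i)=\eta(Y)(m_i\otimes(0,\dots,g_i,\dots,0))$, every $y$ lies in the image of $\eta(Y)$. Invoking Proposition \ref{fg} in the other direction then shows that $\mathcal M$ is finitely generated in $Mod\text{-}(\mathcal C\# H)$.

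There is no genuine obstacle here; the content is already packaged in Proposition \ref{fg}, and the only substantive point is the remark that tensoring a direct sum of representables with a finite dimensional $H$-module produces, as a right $\mathcal C$-module, a finite direct sum of the same representables, which relies only on the trivial $\mathcal C$-action on the $V$-factor of $V\otimes\mathcal N$ recorded in Lemma \ref{tenpro}.
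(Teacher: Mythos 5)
Your proof is correct and follows essentially the same route as the paper: both directions hinge on Proposition \ref{fg} together with the observation that the right $\mathcal C$-action on $V\otimes\mathcal N$ is trivial on the $V$-factor, so that $V\otimes\bigl(\bigoplus_{i}{\bf h}_{X_i}\bigr)$ is, as a $\mathcal C$-module, a finite direct sum of representables. The only cosmetic difference is in the direction ``finitely generated over $\mathcal C$ implies finitely generated over $\mathcal C\# H$'': the paper disposes of it in one line via $\mathcal M(f_i)(m_i)=\mathcal M(f_i\# 1_H)(m_i)$ without needing local finiteness, whereas you route it through the epimorphism of Proposition \ref{fg}, which is equally valid since $\mathcal M\in mod\text{-}(\mathcal C\# H)$ is assumed.
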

\begin{proof}
Let $\mathcal{M}\in mod$-$(\mathcal{C}\#H)$ be finitely generated in $Mod$-$\mathcal C$. Then, there is a finite family $\{m_i \in el(\mathcal M)\}_{i\in I}$ of elements of $\mathcal M$ such that every $y\in el(\mathcal M)$  can be expressed as $y=\sum_{i\in I} \mathcal{M}(f_i)(m_i)$ for some $f_i \in Hom_\mathcal{C}(|y|,|m_i|)$. Then, $y=\sum_{i\in I} \mathcal{M}(f_i\# 1_H)(m_i)$ and hence
$\mathcal M$ is finitely generated as a $(\mathcal C\# H)$-module.

\smallskip
Conversely, let $\mathcal{M}$ in $mod$-$(\mathcal{C}\#H)$ be finitely generated in $Mod$-$(\mathcal C\# H)$ and let 
\begin{equation}\label{4epx} \eta:V\otimes \left(\bigoplus_{i=1}^n {\bf{h}}_{X_i}\right)\longrightarrow \mathcal{M}
\end{equation} denote the epimorphism in $Mod$-$(\mathcal C\# H)$ as in Proposition \ref{fg}. In particular, $\eta$ is 
an epimorphism in $Mod$-$\mathcal C$. Then, if $\{v_1,...,v_k\}$ is a basis for $V$, it follows from the epimorphism in \eqref{4epx} that $\{m_{ij}=\eta(X_i)\big(v_j \otimes \text{id}_{X_i}\big) \in \mathcal{M}(X_i)\}_{1\leq i\leq n,1\leq j\leq k}$ gives a finite set of generators for $\mathcal{M}$ as a right $\mathcal{C}$-module.
\end{proof}

We remark here that if $V$ and $V'$ are left $H$-modules, then $Hom_K(V,V')$ carries a left $H$-module action defined by
\begin{equation}
(hf)(v)=\sum h_1f(S(h_2)v) \qquad \forall~v\in V, h\in H
\end{equation} This may be seen as  the special case of the action described in Proposition \ref{2.5} when $\mathcal C$ is the category with one object
having endomorphism ring $K$.  

\begin{lemma}\label{3.10}
Let $V\in H$-$Mod$ and $\mathcal{N}\in Mod$-$(\mathcal{C}\#H)$. Then, $Hom_K(V,\mathcal{N}(X))$ and $Hom_{Mod\text{-}\mathcal{C}}\big(V \otimes {\bf{h}}_{X}, \mathcal{N} \big)$ are isomorphic as objects in $H$-$Mod$ for each $X\in \text{Ob}(\mathcal{C})$.
\end{lemma}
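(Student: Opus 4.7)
My plan is to construct an explicit Yoneda-type isomorphism at the level of underlying $K$-vector spaces and then verify that it intertwines the prescribed $H$-actions. I would define
$\phi: Hom_{Mod\text{-}\mathcal{C}}(V \otimes {\bf h}_X, \mathcal{N}) \longrightarrow Hom_K(V, \mathcal{N}(X))$ by $\phi(\eta)(v) := \eta(X)(v \otimes \mathrm{id}_X)$, with candidate inverse $\psi(f)(Y)(v \otimes g) := \mathcal{N}(g)(f(v))$ for $g \in Hom_\mathcal{C}(Y, X)$ and $v \in V$. That $\psi(f)$ is a right $\mathcal{C}$-module morphism will follow from the right $\mathcal{C}$-action on $V \otimes {\bf h}_X$ described in Lemma \ref{tenpro}, which gives $(V \otimes {\bf h}_X)(g')(v \otimes g) = v \otimes gg'$, combined with the functoriality of $\mathcal{N}$. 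The identity $\phi \circ \psi = \mathrm{id}$ is immediate from $\mathcal{N}(\mathrm{id}_X) = \mathrm{id}_{\mathcal{N}(X)}$, and $\psi \circ \phi = \mathrm{id}$ follows by applying the $\mathcal{C}$-linearity of $\eta$ to the element $v \otimes g = (V \otimes {\bf h}_X)(g)(v \otimes \mathrm{id}_X)$.

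The remaining and principal step is to check that $\phi$ is $H$-equivariant. The action on $Hom_K(V, \mathcal{N}(X))$ is $(hf)(v) = \sum h_1 f(S(h_2)v)$, while on the source the action from \eqref{3xee3/1} reads $(h \cdot \eta)(Y)(w) = \sum h_1 \eta(Y)(S(h_2)w)$, with $S(h_2)$ acting diagonally on $w \in V \otimes {\bf h}_X(Y)$. Using the antipode-coproduct compatibility $\Delta \circ S = (S \otimes S) \circ \tau \circ \Delta$, which here gives $\Delta(S(h_2)) = \sum S(h_3) \otimes S(h_2)$, I would expand
\[
(h \cdot \eta)(Y)(v \otimes g) = \sum h_1 \eta(Y)(S(h_3) v \otimes S(h_2) g).
\]
Setting $Y = X$ and $g = \mathrm{id}_X$, and invoking $h \cdot \mathrm{id}_X = \varepsilon(h)\mathrm{id}_X$ from Definition \ref{defH-cat}(ii), the factor $S(h_2)$ acting on $\mathrm{id}_X$ is absorbed by the counit, leaving $\phi(h \cdot \eta)(v) = \sum h_1 \eta(X)(S(h_2) v \otimes \mathrm{id}_X) = (h \cdot \phi(\eta))(v)$.

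The main obstacle is bookkeeping: one must keep track of several $H$-actions simultaneously (on $V$, on ${\bf h}_X(Y)$ via the $H$-category structure, on the diagonal tensor $V \otimes {\bf h}_X(Y)$, on the Hom space, and on $\mathcal{N}(X)$) and carefully unfold the diagonal action of $S(h_2)$ using the antipode-coproduct identity. The computation ultimately succeeds because, upon evaluating at $\mathrm{id}_X$, the $S(h_2)$-component acting on $\mathrm{id}_X$ collapses through the counit, which is precisely what lets the Sweedler indices reduce to match the action on $Hom_K(V, \mathcal{N}(X))$.
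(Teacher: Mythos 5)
Your proposal is correct and follows essentially the same route as the paper: the same map $\phi(\eta)(v)=\eta(X)(v\otimes \mathrm{id}_X)$, and the same $H$-linearity check in which $\Delta(S(h_2))=\sum S(h_3)\otimes S(h_2)$ is used and the factor acting on $\mathrm{id}_X$ collapses via $\varepsilon(S(h_2))=\varepsilon(h_2)$. The only difference is that you spell out the inverse $\psi$ and the bijectivity, which the paper dispatches as "the canonical isomorphism."
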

\begin{proof}
We check that the canonical isomorphism $\phi: Hom_{Mod\text{-}\mathcal{C}}\big(V \otimes {\bf{h}}_{X}, \mathcal{N} \big) \longrightarrow Hom_K(V,Hom_{Mod\text{-}\mathcal C}({\bf h}_X,\mathcal N))\cong Hom_K(V,\mathcal{N}(X))$ defined by 
$\phi(\eta)(v):=\eta(X)( v \otimes \text{id}_X)$ for any morphism $\eta\in Hom_{Mod\text{-}\mathcal{C}}\big(V \otimes {\bf{h}}_{X}, \mathcal{N} \big)$ and $v \in V$ is $H$-linear:
\begin{align*}
\phi(h\eta)(v)&=(h\eta)(X)(v \otimes id_X)\\
              &= \sum h_1\eta(X)\big(S(h_2)(v \otimes id_X)\big)\\
              &= \sum h_1\eta(X)\big( S(h_3)v\otimes S(h_2)id_X\big)\\
             &= \sum h_1\eta(X)\big( S(h_3)v\otimes\varepsilon(S(h_2))id_X\big)\\
             &= \sum h_1\eta(X)\big( S(h_3)v\otimes\varepsilon(h_2)id_X\big)\\
             &=\sum h_1\eta(X)\big( S(h_2)v\otimes id_X\big)\\
              &= \sum h_1\phi(\eta)(S(h_2)v)=(h\phi(\eta))(v)
\end{align*}

\end{proof}

\begin{prop}\label{Mfg}
Let $\mathcal{M}$ and $\mathcal{N}$ be in $mod$-$(\mathcal{C}\#H)$ with $\mathcal{M}$ finitely generated in $Mod$-$(\mathcal{C}\#H)$. Then, $ Hom_{Mod\text{-}\mathcal{C}}(\mathcal{M},\mathcal{N})$ is $H$-locally finite, i.e., $\mathcal{L}_{Mod\text{-}\mathcal{C}}(\mathcal{M},\mathcal{N}) = Hom_{Mod\text{-}\mathcal{C}}(\mathcal{M},\mathcal{N})^{(H)}= Hom_{Mod\text{-}\mathcal{C}}(\mathcal{M},\mathcal{N})$.
\end{prop}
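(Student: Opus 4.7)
The plan is to embed $Hom_{Mod\text{-}\mathcal{C}}(\mathcal{M},\mathcal{N})$ as a sub-$H$-module of a module that is manifestly $H$-locally finite, and then invoke the fact that any sub-$H$-module of an $H$-locally finite $H$-module is itself $H$-locally finite.

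Since $\mathcal{M}$ is finitely generated in $Mod\text{-}(\mathcal{C}\#H)$, Proposition \ref{fg} supplies a finite dimensional $V \in H\text{-}Mod$, finitely many objects $X_1,\dots,X_n \in Ob(\mathcal{C})$, and an epimorphism $\eta: V \otimes \left( \bigoplus_{i=1}^n {\bf h}_{X_i} \right) \twoheadrightarrow \mathcal{M}$ in $Mod\text{-}(\mathcal{C}\#H)$. Viewed in $Mod\text{-}\mathcal{C}$, the map $\eta$ remains an epimorphism (surjectivity is detected pointwise), so precomposition with $\eta$ yields an injection
\begin{equation*}
\eta^{*}: Hom_{Mod\text{-}\mathcal{C}}(\mathcal{M},\mathcal{N}) \hookrightarrow Hom_{Mod\text{-}\mathcal{C}}\!\left(V \otimes \bigoplus_{i=1}^n {\bf h}_{X_i},\, \mathcal{N}\right) \cong \bigoplus_{i=1}^n Hom_K(V,\, \mathcal{N}(X_i)),
\end{equation*}
where the final identification uses Lemma \ref{3.10} together with the fact that $Hom_{Mod\text{-}\mathcal{C}}(-,\mathcal{N})$ carries finite direct sums to finite products. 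Moreover, $\eta^{*}$ is $H$-linear: since each $\eta(X)$ is $H$-linear by Lemma \ref{2.2}, the action formula \eqref{3xee3/1} commutes with precomposition by $\eta$.

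The crux is then to verify that each $Hom_K(V, \mathcal{N}(X_i))$ is $H$-locally finite. Fix a basis $\{v_1,\dots,v_k\}$ of $V$ and $f \in Hom_K(V, \mathcal{N}(X_i))$. Since $\mathcal{N}(X_i)$ is $H$-locally finite, the subspace $W := \sum_{j=1}^k H f(v_j) \subseteq \mathcal{N}(X_i)$ is finite dimensional. Expanding $S(h_2)v_\ell$ in the basis $\{v_j\}$ inside the formula $(hf)(v_\ell) = \sum h_1 f(S(h_2) v_\ell)$ shows that $(hf)(v_\ell) \in W$ for every $h \in H$ and every index $\ell$. Consequently $Hf \subseteq Hom_K(V, W)$, which is finite dimensional. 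This is the main technical point of the argument; everything else is formal manipulation. Combining with the injection above, $Hom_{Mod\text{-}\mathcal{C}}(\mathcal{M},\mathcal{N})$ embeds as an $H$-submodule of a finite direct sum of $H$-locally finite $H$-modules and is therefore $H$-locally finite, giving the required equality $\mathcal{L}_{Mod\text{-}\mathcal{C}}(\mathcal{M},\mathcal{N}) = Hom_{Mod\text{-}\mathcal{C}}(\mathcal{M},\mathcal{N})$.
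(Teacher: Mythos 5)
Your proposal is correct and follows essentially the same route as the paper: both reduce via Proposition \ref{fg} to an $H$-linear embedding of $Hom_{Mod\text{-}\mathcal{C}}(\mathcal{M},\mathcal{N})$ into $\bigoplus_{i=1}^n Hom_K(V,\mathcal{N}(X_i))$ using Lemma \ref{3.10}, and then argue that the latter is $H$-locally finite. The only (harmless) divergence is in the last step, where the paper identifies $Hom_K(V,\mathcal{N}(X_i))\cong \mathcal{N}(X_i)\otimes V^{*}$ as $H$-modules and invokes closure of $H$-$mod$ under tensor products, whereas you check local finiteness of $Hom_K(V,\mathcal{N}(X_i))$ directly by trapping $Hf$ inside $Hom_K(V,W)$ for the finite dimensional subspace $W=\sum_{j}Hf(v_j)$.
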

\begin{proof}
Since $\mathcal M\in mod$-$(\mathcal C\# H)$ is finitely generated in $Mod$-$(\mathcal C\# H)$, there exists by Proposition \ref{fg} a finite dimensional $H$-module $V$ and an epimorphism $\varphi:  V\otimes\left(\bigoplus_{i=1}^n {\bf{h}}_{X_i}\right)  \longrightarrow \mathcal{M}$ in $Mod$-$(\mathcal C\# H)$ for finitely many objects $X_1,\ldots,X_n$ in $\mathcal{C}$. Thus we get a monomorphism
\begin{equation}\label{mono4.3}\hat{\varphi}:Hom_{Mod\text{-}\mathcal{C}}(\mathcal{M},\mathcal{N}) \longrightarrow Hom_{Mod\text{-}\mathcal{C}}\left( V\otimes \left(\bigoplus_{i=1}^n  {\bf{h}}_{X_i}\right) , \mathcal{N}\right), \quad
\eta \mapsto \eta \circ \varphi.
\end{equation}
For each $X \in \text{Ob}(\mathcal{C})$, $v \in V$ and $f \in {\bf{h}}_{X_i}(X)$ for some chosen $1\leq i\leq n$, we have
\begin{align*}
\hat{\varphi}(h\eta)(X)(v \otimes f)&=(h\eta \circ \varphi)(X)(v \otimes f)
									=(h\eta)(X)\varphi(X)(v \otimes f)
									=\sum h_1 \eta(X)\big(S(h_2)\varphi(X)(v \otimes f)\big)\\
									&=\sum h_1 \eta(X)\big(\varphi(X)(S(h_2)(v \otimes f))\big)
									=\sum h_1 (\eta \circ \varphi)(X)(S(h_2)(v \otimes f))\\
									&=(h(\eta \circ \varphi))(X)(v \otimes f)
									=(h\hat{\varphi}(\eta))(X)(v \otimes f)
									\end{align*}
This shows that $\hat{\varphi}$ is an $H$-module monomorphism. By Lemma \ref{3.10}, we know that  
$$Hom_{Mod\text{-}\mathcal{C}}\left(V\otimes \left(\bigoplus_{i=1}^n {\bf{h}}_{X_i}\right), \mathcal{N}\right)\cong \bigoplus_{i=1}^nHom_K(V,\mathcal{N}(X_i))$$		
as $H$-modules. Since $V$ is finite dimensional, we know that $Hom_K(V,\mathcal{N}(X_i))\cong 
\mathcal N(X_i)\otimes V^*$ in $Vect_K$ and it is easily seen that this is an isomorphism of $H$-modules. Since  $\mathcal{N}$ is an $H$-locally finite $(\mathcal{C}\#H)$-module and $V^*$ is $H$-locally finite (because $dim_K(V^*)<\infty$),  it follows that each 
$\mathcal N(X_i)\otimes V^*$   is $H$-locally finite. The embedding in \eqref{mono4.3} now  shows that $Hom_{Mod\text{-}\mathcal{C}}(\mathcal{M},\mathcal{N})$ is $H$-locally finite. 					
\end{proof}

\begin{lemma}\label{hlocfinx} Let $\mathcal M$, $\mathcal N\in Mod$-$(\mathcal C\# H)$. For a morphism $\eta\in Hom_{Mod\text{-}
\mathcal C}(\mathcal M,\mathcal N)$, the following are equivalent:

\smallskip
(1) $\eta\in Hom_{Mod\text{-}\mathcal C}(\mathcal M,\mathcal N)^{(H)}=\mathcal L_{Mod\text{-}\mathcal C}(\mathcal M,\mathcal N)$. 

\smallskip
(2) There exists a finite dimensional $H$-module $V$, an element $v\in V$  and some $\hat\eta\in Hom_{Mod\text{-}(\mathcal C
\# H)}(V\otimes 
\mathcal M,\mathcal N)$ such that $\hat\eta(X)(v\otimes m)=\eta(X)(m)$ for each $X\in Ob(\mathcal C)$ and $m\in \mathcal M(X)$.  
\end{lemma}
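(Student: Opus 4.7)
The proof is essentially an application of the adjunction established in Proposition \ref{2.7} (and its natural $H$-linearity), so the plan is to unwind the correspondence in both directions and identify what corresponds to the finite-dimensionality condition.

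For the implication $(2) \Rightarrow (1)$: Given $V$, $v \in V$ and $\hat\eta \in Hom_{Mod\text{-}(\mathcal{C}\#H)}(V \otimes \mathcal{M}, \mathcal{N})$, I would apply the isomorphism $\phi$ of Proposition \ref{2.7} with $M=V$, $\mathcal{N}$ replaced by $\mathcal{M}$, and $\mathcal{P}$ replaced by $\mathcal{N}$. This produces an $H$-linear map $\phi(\hat\eta) : V \longrightarrow Hom_{Mod\text{-}\mathcal{C}}(\mathcal{M}, \mathcal{N})$ satisfying $\phi(\hat\eta)(v')(X)(m) = \hat\eta(X)(v' \otimes m)$. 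The hypothesis gives $\phi(\hat\eta)(v) = \eta$. Then $H$-linearity yields $h \cdot \eta = h \cdot \phi(\hat\eta)(v) = \phi(\hat\eta)(hv)$, so $H\eta \subseteq \phi(\hat\eta)(V)$. Since $V$ is finite dimensional, so is $H\eta$, which places $\eta$ in $Hom_{Mod\text{-}\mathcal{C}}(\mathcal{M}, \mathcal{N})^{(H)}$.

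For the implication $(1) \Rightarrow (2)$: Starting from $\eta$ with $H\eta$ finite dimensional, I would set $V := H\eta$, regarded as a finite dimensional $H$-submodule of $Hom_{Mod\text{-}\mathcal{C}}(\mathcal{M}, \mathcal{N})$, and take $v := \eta \in V$. The inclusion $\iota : V \hookrightarrow Hom_{Mod\text{-}\mathcal{C}}(\mathcal{M}, \mathcal{N})$ is tautologically $H$-linear, so via the inverse of the isomorphism of Proposition \ref{2.7} (explicitly described by \eqref{trx3.1}), it corresponds to a morphism $\hat\eta \in Hom_{Mod\text{-}(\mathcal{C}\#H)}(V \otimes \mathcal{M}, \mathcal{N})$ determined by $\hat\eta(X)(\eta' \otimes m) = \eta'(X)(m)$ for $\eta' \in V$ and $m \in \mathcal{M}(X)$. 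Evaluating at $v = \eta$ gives $\hat\eta(X)(v \otimes m) = \eta(X)(m)$, as required.

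There is no real obstacle here; the proof is just a dictionary translation via the tensor-hom adjunction. The only small thing to confirm is that when $V$ is taken as the finite dimensional $H$-submodule $H\eta$ (which is automatic), the corresponding $\hat\eta$ really is a $(\mathcal{C}\#H)$-module morphism rather than merely a $\mathcal{C}$-module morphism, but this is precisely the content of Proposition \ref{2.7} (specifically the observation following \eqref{trx3.1} that $\nu$ lies in $Hom_{Mod\text{-}(\mathcal{C}\#H)}(V \otimes \mathcal{M}, \mathcal{N})$).
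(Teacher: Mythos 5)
Your proof is correct and is essentially the paper's own argument: the paper sets $V=H\eta$, takes $v=\eta$, defines $\hat\eta(X)(h\eta\otimes m)=(h\eta)(X)(m)$, and in the converse direction produces the maps $\xi_i=\phi(\hat\eta)(v_i)$ spanning $H\eta$ — exactly the two directions of your dictionary. The only difference is cosmetic: you invoke the adjunction of Proposition \ref{2.7} to supply the $H$-linearity and $(\mathcal C\#H)$-linearity checks, whereas the paper redoes those verifications by direct computation with a basis of $V$.
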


\begin{proof}
(1) $\Rightarrow$ (2) : We put $V=H\eta$. Since $\eta\in  Hom_{Mod\text{-}\mathcal C}(\mathcal M,\mathcal N)^{(H)}$, we see that $V$ is finite dimensional. Let $\{\eta_1,...,\eta_k\}$ be a basis for $V=H\eta$. Any element $h\eta\in V$ can now be expressed
as $h\eta=\sum_{i=1}^k\alpha_i(h)\eta_i$. 

\smallskip We now define $\hat\eta\in Hom_{Mod\text{-}\mathcal C}(V\otimes\mathcal M,\mathcal N)$ by setting 
$
 \hat\eta(X)(h\eta\otimes m):=(h\eta)(X)(m)
$ for each $h\in H$, $X\in Ob(\mathcal C)$ and $m\in \mathcal M(X)$. It is clear that $\hat\eta(X)(\eta\otimes m)=\eta(X)(m)
$.

\smallskip
In order to show that $\hat\eta\in Hom_{Mod\text{-}(\mathcal C
\# H)}(V\otimes 
\mathcal M,\mathcal N)$, it suffices to show that each $\hat\eta(X):V\otimes \mathcal M(X)\longrightarrow\mathcal N(X)$
is $H$-linear. For $h'\in H$, we have
\begin{equation*}
\begin{array}{ll}
\hat\eta(X)(h'(h\eta\otimes m))&=\sum \hat\eta(X)( h'_1h\eta\otimes h'_2m )=\sum \sum_{i=1}^k \hat\eta(X)(\alpha_i(h'_1h)\eta_i\otimes h_2'm)\\&=\sum \sum_{i=1}^k\alpha_i(h'_1h)\hat\eta(X)(\eta_i\otimes h_2'm)=\sum \sum_{i=1}^k\alpha_i(h_1'h)\eta_i(X)(h_2'm)\\
&=\sum (h'_1h\eta)(X)(h'_2m)=\sum h'_1h_1\eta(X)(S(h_2)S(h'_2)h_3'm)\\
&= \sum h_1'h_1\eta(X)(S(h_2)\varepsilon(h_2')m)\\
&= \sum h'h_1\eta(X)(S(h_2)m)=\sum h'((h\eta)(X)(m))=h'\hat\eta(X)(h\eta\otimes m)
\end{array}
\end{equation*}

\smallskip
(2) $\Rightarrow$ (1) : We are given $\hat\eta\in Hom_{Mod\text{-}(\mathcal C
\# H)}(V\otimes 
\mathcal M,\mathcal N)$. Let $\{v_1,...,v_k\}$ be a basis for $V$ and suppose that $hv=\sum_{i=1}^k \alpha_i(h)v_i$. For each $1\leq i\leq k$, we define $\xi_i\in Hom_{Mod\text{-}\mathcal C}(\mathcal M,
\mathcal N)$ by setting $\xi_i(X)(m):=\hat\eta(X)(v_i\otimes m)$ for $X\in Ob(\mathcal C)$, $m\in \mathcal M(X)$. For any $h\in H$, we see that
\begin{equation*}
\begin{array}{ll}
(h\eta)(X)(m)&= \sum h_1\eta(X)(S(h_2)m) =\sum h_1\hat\eta(X)(v\otimes S(h_2)m)\\ 
&= \sum \hat\eta(X)(h_1v\otimes h_2S(h_3)m)=\hat\eta(X)(hv\otimes m)\\
&= \sum_{i=1}^k\alpha_i(h)\hat\eta(X)(v_i\otimes m) =\sum_{i=1}^k\alpha_i(h)\xi_i(X)(m)\\
\end{array}
\end{equation*}  It follows from the above that  $H\eta$ lies in the space generated by the finite collection
$\{\xi_1,...,\xi_k\}\in Hom_{Mod\text{-}\mathcal C}(\mathcal M,
\mathcal N)$. This proves the result. 
\end{proof}

\begin{prop}\label{exact}
If $\mathcal{I}$ is an injective object in $mod$-$(\mathcal{C}\#H)$, then $\mathcal{L}_{Mod\text{-}\mathcal{C}}\big({-},\mathcal{I}\big)$ is an exact functor from $mod$-$(\mathcal{C}\#H)$ to $H$-$mod$.
\end{prop}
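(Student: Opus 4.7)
The plan is to prove that $\mathcal{L}_{Mod\text{-}\mathcal{C}}(-,\mathcal{I})$ is both left and right exact as a contravariant functor.

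For left exactness, I would note that $\mathcal{L}_{Mod\text{-}\mathcal{C}}(-,\mathcal{I}) = (-)^{(H)} \circ Hom_{Mod\text{-}\mathcal{C}}(-,\mathcal{I})$. The contravariant $Hom$-functor $Hom_{Mod\text{-}\mathcal{C}}(-,\mathcal{I})$ is left exact in its first argument (standard), and $(-)^{(H)}:H\text{-}Mod \to H\text{-}mod$ preserves kernels since it is a right adjoint to the forgetful functor by Proposition \ref{adj}(1). Hence the composite is left exact.

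The real content is right exactness: given a monomorphism $\iota:\mathcal{M}'\hookrightarrow\mathcal{M}$ in $mod\text{-}(\mathcal{C}\#H)$, I must show the induced map $\mathcal{L}_{Mod\text{-}\mathcal{C}}(\mathcal{M},\mathcal{I})\to\mathcal{L}_{Mod\text{-}\mathcal{C}}(\mathcal{M}',\mathcal{I})$ is surjective. The key tool is Lemma \ref{hlocfinx}. Given $\eta'\in\mathcal{L}_{Mod\text{-}\mathcal{C}}(\mathcal{M}',\mathcal{I})$, the implication $(1)\Rightarrow(2)$ of that lemma provides a finite-dimensional $H$-module $V$, an element $v\in V$, and a morphism $\hat{\eta}'\in Hom_{Mod\text{-}(\mathcal{C}\#H)}(V\otimes\mathcal{M}',\mathcal{I})$ with $\hat{\eta}'(X)(v\otimes m') = \eta'(X)(m')$ for all $m'\in\mathcal{M}'(X)$.

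Next I would observe that since $V$ is a finite-dimensional (hence $H$-locally finite) $H$-module and $\mathcal{M}',\mathcal{M}\in mod\text{-}(\mathcal{C}\#H)$, the tensor products $V\otimes\mathcal{M}'$ and $V\otimes\mathcal{M}$ again lie in $mod\text{-}(\mathcal{C}\#H)$ (as remarked in the paragraph preceding Corollary \ref{lem 3.5}). Moreover, $\mathrm{id}_V\otimes\iota:V\otimes\mathcal{M}'\to V\otimes\mathcal{M}$ is still a monomorphism since tensoring over $K$ with $V$ is exact. Invoking injectivity of $\mathcal{I}$ in $mod\text{-}(\mathcal{C}\#H)$, I extend $\hat{\eta}'$ to a morphism $\hat{\eta}\in Hom_{Mod\text{-}(\mathcal{C}\#H)}(V\otimes\mathcal{M},\mathcal{I})$. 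Defining $\eta(X)(m):=\hat{\eta}(X)(v\otimes m)$ and applying the implication $(2)\Rightarrow(1)$ of Lemma \ref{hlocfinx} shows that $\eta\in\mathcal{L}_{Mod\text{-}\mathcal{C}}(\mathcal{M},\mathcal{I})$, and by construction it restricts along $\iota$ to $\eta'$. This completes the proof.

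The main obstacle I anticipate is purely bookkeeping: one must verify carefully that the extension data $V$, $v$, $\hat\eta'$ produced by Lemma \ref{hlocfinx} is compatible with the injectivity hypothesis, i.e.\ that $V\otimes\mathcal{M}'\hookrightarrow V\otimes\mathcal{M}$ is genuinely a monomorphism in the subcategory $mod\text{-}(\mathcal{C}\#H)$ where $\mathcal{I}$ is assumed injective. This is why it is essential that $V$ be finite-dimensional (so that $V\otimes\mathcal{M}$ remains $H$-locally finite), and this is exactly what Lemma \ref{hlocfinx} guarantees; no finiteness assumption on $\mathcal{M}$ itself is needed.
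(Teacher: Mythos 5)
Your proposal is correct and follows essentially the same route as the paper: both reduce the surjectivity of $\mathcal{L}_{Mod\text{-}\mathcal{C}}(-,\mathcal{I})$ on a monomorphism to extending a morphism out of $V\otimes\mathcal{M}'$ for a finite-dimensional $H$-module $V$, using injectivity of $\mathcal{I}$ in $mod\text{-}(\mathcal{C}\#H)$ and then restricting along $v\otimes(-)$, with Lemma \ref{hlocfinx} certifying local finiteness of the result. The only (cosmetic) difference is that you invoke the implication $(1)\Rightarrow(2)$ of Lemma \ref{hlocfinx} as a black box, whereas the paper re-runs that construction explicitly with $V=H\eta$ and $v=\eta$.
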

\begin{proof}
Let $0\longrightarrow \mathcal{M} \overset{i}{\longrightarrow} \mathcal{N} \longrightarrow \mathcal{P} \longrightarrow 0$ be an exact sequence in $mod$-$(\mathcal{C}\#H)$ and $\mathcal{I}$ be an injective object in $mod$-$(\mathcal{C}\#H)$. Then,  $0\longrightarrow Hom_{Mod\text{-}\mathcal{C}}(\mathcal{P},\mathcal{I}) \longrightarrow Hom_{Mod\text{-}\mathcal{C}}(\mathcal{N},\mathcal{I}) \longrightarrow Hom_{Mod\text{-}\mathcal{C}}(\mathcal{M},\mathcal{I})$ is an exact sequence in $H$-$Mod$. Since the functor $(-)^{(H)}: H$-$Mod\longrightarrow H$-$mod$ is a right adjoint by Proposition \ref{adj}(1), it preserves monomorphisms. Thus, $0\longrightarrow \mathcal{L}_{Mod\text{-}\mathcal{C}}(\mathcal{P},\mathcal{I}) \longrightarrow \mathcal{L}_{Mod\text{-}\mathcal{C}}(\mathcal{N},\mathcal{I}) \longrightarrow \mathcal{L}_{Mod\text{-}\mathcal{C}}(\mathcal{M},\mathcal{I})$ is an exact sequence in $H$-$mod$. 

\smallskip Let $\eta \in  \mathcal{L}_{Mod\text{-}\mathcal{C}}(\mathcal{M},\mathcal{I})$. We set $V:=H\eta$. Then, $V$ is a finite dimensional $H$-module and therefore $V\in H$-$mod$. Thus, $ V \otimes \mathcal{M},  V \otimes \mathcal{N} \in mod$-$(\mathcal{C}\# H)$ and we have a monomorphism ${id}_V \otimes i: V \otimes \mathcal{M} \longrightarrow V \otimes \mathcal{N}$ in $mod$-$(\mathcal{C}\# H)$. Now, we consider the morphism $\zeta\in Hom_{Mod\text{-}\mathcal C
}(V\otimes \mathcal{M},\mathcal{I})$ defined by setting $\zeta(X)(\nu \otimes m):=\nu(X)(m)$ for each $X \in Ob(\mathcal{C})$, $m \in \mathcal{M}(X)$ and $\nu \in V$. It may be verified that $\zeta(X)$ is $H$-linear for each $X\in Ob(\mathcal{C})$. Thus, $\zeta\in Hom_{mod\text{-}(\mathcal C
\# H)}(V\otimes \mathcal{M},\mathcal{I})$. Since $\mathcal{I}$ is injective in $mod$-$(\mathcal{C}\#H)$, there exists a morphism $\xi:V \otimes\mathcal{N} \longrightarrow \mathcal I$ in $mod$-$(\mathcal{C}\#H)$ such that $\xi(\text{id}_V\otimes i)=\zeta$. The morphism $\xi\in Hom_{Mod\text{-}(\mathcal C
\# H)}(V\otimes 
\mathcal N,\mathcal I)$ now induces a morphism $\hat{\xi}\in Hom_{Mod\text{-}
\mathcal C}(\mathcal N,\mathcal I)$ defined by setting $\hat{\xi}(X)(n):= \xi(X)(\eta\otimes n)$ for every  $X\in Ob(\mathcal{C})$ and $n\in \mathcal{N}(X)$. Applying Lemma \ref{hlocfinx}, we see that $\hat{\xi}\in\mathcal{L}_{Mod\text{-}\mathcal{C}}(\mathcal{N},\mathcal{I})$. Also, $\hat{\xi} \circ i=\eta$. This completes the proof.
\end{proof}

\begin{prop}\label{freeres}
Let $\mathcal{C}$ be a left $H$-locally finite category which is  right noetherian. Let $\mathcal{M} \in mod\text{-}(\mathcal{C}\#H)$ be finitely generated as an object in $Mod$-$(\mathcal{C}\#H)$. If $\mathcal{I}$ is an injective object in $mod$-$(\mathcal{C}\#H)$, then $\textnormal{Ext}_{Mod\text{-}\mathcal{C}}^p(\mathcal{M},\mathcal{I})= 0$ for all $p>0$.
\end{prop}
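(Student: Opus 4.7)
The plan is to compute $\textnormal{Ext}^p_{Mod\text{-}\mathcal{C}}(\mathcal{M},\mathcal{I})$ by constructing a resolution of $\mathcal{M}$ that is simultaneously a projective resolution in $Mod\text{-}\mathcal{C}$ and an exact sequence in $mod\text{-}(\mathcal{C}\# H)$ built from finitely generated objects. Then, applying $Hom_{Mod\text{-}\mathcal{C}}(-,\mathcal{I})$ to this resolution will coincide with applying $\mathcal{L}_{Mod\text{-}\mathcal{C}}(-,\mathcal{I})$ (by Proposition \ref{Mfg}), at which point we invoke the exactness from Proposition \ref{exact}.

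The core construction is an inductive one. By Proposition \ref{fg}, since $\mathcal{M}$ is finitely generated in $Mod\text{-}(\mathcal{C}\# H)$ and lies in $mod\text{-}(\mathcal{C}\# H)$, we obtain an epimorphism $\mathcal{P}^0:=V_0\otimes\bigoplus_{i=1}^{n_0}\mathbf{h}_{X_i^0}\twoheadrightarrow\mathcal{M}$ in $Mod\text{-}(\mathcal{C}\# H)$ with $V_0$ a finite dimensional $H$-module. Since $\mathcal{C}$ is $H$-locally finite and $V_0$ is finite dimensional, $\mathcal{P}^0$ lies in $mod\text{-}(\mathcal{C}\# H)$; as a right $\mathcal{C}$-module, $\mathcal{P}^0\cong\bigoplus_{i=1}^{n_0}(\mathbf{h}_{X_i^0})^{\oplus \dim V_0}$ is a finite direct sum of representables, hence projective in $Mod\text{-}\mathcal{C}$ and, by the noetherian hypothesis on $\mathcal{C}$, noetherian in $Mod\text{-}\mathcal{C}$. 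Therefore the kernel $\mathcal{K}^0$ of $\mathcal{P}^0\twoheadrightarrow\mathcal{M}$ is finitely generated in $Mod\text{-}\mathcal{C}$, belongs to $mod\text{-}(\mathcal{C}\# H)$ (being a submodule of an $H$-locally finite module), and so by Corollary \ref{CHfgC} is also finitely generated in $Mod\text{-}(\mathcal{C}\# H)$. Iterating this procedure with $\mathcal{K}^0$ in place of $\mathcal{M}$, we obtain a resolution
\begin{equation*}
\cdots\longrightarrow\mathcal{P}^2\longrightarrow\mathcal{P}^1\longrightarrow\mathcal{P}^0\longrightarrow\mathcal{M}\longrightarrow 0
\end{equation*}
in which every $\mathcal{P}^k$ is projective in $Mod\text{-}\mathcal{C}$, lies in $mod\text{-}(\mathcal{C}\# H)$, and is finitely generated in $Mod\text{-}(\mathcal{C}\# H)$.

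Since the $\mathcal{P}^k$ are projective in $Mod\text{-}\mathcal{C}$, we may compute $\textnormal{Ext}^p_{Mod\text{-}\mathcal{C}}(\mathcal{M},\mathcal{I})$ as the cohomology of the complex $Hom_{Mod\text{-}\mathcal{C}}(\mathcal{P}^\bullet,\mathcal{I})$. Because each $\mathcal{P}^k$ is finitely generated in $Mod\text{-}(\mathcal{C}\# H)$ and $\mathcal{I}\in mod\text{-}(\mathcal{C}\# H)$, Proposition \ref{Mfg} gives
\begin{equation*}
Hom_{Mod\text{-}\mathcal{C}}(\mathcal{P}^k,\mathcal{I})=\mathcal{L}_{Mod\text{-}\mathcal{C}}(\mathcal{P}^k,\mathcal{I}).
\end{equation*}
The resolution $\mathcal{P}^\bullet\to\mathcal{M}$ is exact in $Mod\text{-}\mathcal{C}$, and since exactness in $mod\text{-}(\mathcal{C}\# H)$ is detected pointwise (kernels and cokernels are computed objectwise), it is also exact in $mod\text{-}(\mathcal{C}\# H)$. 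Splitting it into short exact sequences and applying the exactness of $\mathcal{L}_{Mod\text{-}\mathcal{C}}(-,\mathcal{I})$ given by Proposition \ref{exact}, we conclude that the complex $\mathcal{L}_{Mod\text{-}\mathcal{C}}(\mathcal{P}^\bullet,\mathcal{I})$ has cohomology concentrated in degree zero. Hence $\textnormal{Ext}^p_{Mod\text{-}\mathcal{C}}(\mathcal{M},\mathcal{I})=0$ for $p>0$, as desired.

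The main subtle point — and what requires all three hypotheses on $\mathcal{C}$ — is ensuring that the resolution can be built entirely inside $mod\text{-}(\mathcal{C}\# H)$ with every syzygy remaining finitely generated. The $H$-local finiteness of $\mathcal{C}$ keeps the modules $V\otimes\mathbf{h}_X$ inside $mod\text{-}(\mathcal{C}\# H)$; the right noetherian property together with Corollary \ref{CHfgC} propagates finite generation through the resolution; and finite generation is precisely the hypothesis needed for Propositions \ref{Mfg} and \ref{exact} to apply at each step.
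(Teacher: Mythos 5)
Your proof is correct and follows essentially the same route as the paper: build the resolution by repeatedly applying Proposition \ref{fg}, use the noetherian hypothesis together with Corollary \ref{CHfgC} to keep every syzygy finitely generated inside $mod\text{-}(\mathcal{C}\#H)$, identify $Hom_{Mod\text{-}\mathcal{C}}(\mathcal{P}^k,\mathcal{I})$ with $\mathcal{L}_{Mod\text{-}\mathcal{C}}(\mathcal{P}^k,\mathcal{I})$ via Proposition \ref{Mfg}, and conclude by the exactness of $\mathcal{L}_{Mod\text{-}\mathcal{C}}(-,\mathcal{I})$ from Proposition \ref{exact}. The only cosmetic difference is that you invoke projectivity of the $\mathcal{P}^k$ as finite sums of representables where the paper observes they are free; both justify computing $\textnormal{Ext}$ from this resolution.
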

\begin{proof}
Since $\mathcal{M}\in mod$-$(\mathcal{C}\#H)$ is finitely generated in $Mod$-$(\mathcal{C}\#H)$, by Proposition \ref{fg}, there exists a finite dimensional $H$-module $V_0$ and an epimorphism $$\eta_{0}: \mathcal{P}_0:=V_0\otimes \left(\bigoplus_{i=1}^{n_0}  {\bf{h}}_{X_i}\right)  \longrightarrow \mathcal{M},$$ in $Mod$-$(\mathcal{C}\#H)$ for finitely many objects $\{X_i\}_{1\leq i\leq n_0}$ in $\mathcal{C}$,  where ${\bf{h}}_{X_i}$ are viewed as objects in $Mod$-$(\mathcal{C}\#H)$. Since $\mathcal{C}$ is $H$-locally finite, each ${\bf{h}}_{X_i}\in mod$-$(\mathcal{C}\#H)$.  Since $V_0$   is finite dimensional, we must have $V_0\in H$-$mod$. Thus, $\mathcal{P}_0 \in mod$-$(\mathcal{C}\#H)$. Using Proposition \ref{fg} and Corollary \ref{CHfgC}, it follows that $\mathcal{P}_0$ is finitely generated in $Mod$-$\mathcal{C}$. Since $\mathcal{C}$ is right noetherian, $\mathcal{P}_0$ is a noetherian right $\mathcal{C}$-module (see, for instance, \cite[$\S$ 3]{Mit1}). Since the submodule of a finitely generated noetherian module is finitely generated, the $(\mathcal{C}\# H)$-submodule $\mathcal{K}:= Ker(\eta_0)$ of $\mathcal{P}_0$ is finitely generated in $Mod$-$\mathcal{C}$. So, again using Proposition \ref{fg} and Corollary \ref{CHfgC} it follows that there exists a finite dimensional $H$-module $V_1$ and an epimorphism 
$$\eta_{1}: \mathcal{P}_1:=V_1\otimes \left(\bigoplus_{j=1}^{n_1} {\bf{h}}_{Y_j}\right) \longrightarrow \mathcal{K},$$ in $Mod$-$(\mathcal{C}\#H)$ for finitely many objects $\{Y_j\}_{1\leq j\leq n_1}$ in $\mathcal{C}$. Since $V_0$ and $V_1$ are finite dimensional  $K$-vector spaces, clearly $\mathcal{P}_0$ and $\mathcal{P}_1$ are free right $\mathcal{C}$-modules. Moreover, $Im(\eta_1) = \mathcal{K}= Ker(\eta_0)$. Thus, continuing in this way, we can construct a free resolution of the module $\mathcal{M}$ in the category $Mod$-$\mathcal C$:
$$\mathcal{P}_*=\hdots \longrightarrow \mathcal{P}_i\longrightarrow\hdots \longrightarrow \mathcal{P}_1\longrightarrow \mathcal{P}_0\longrightarrow \mathcal{M}\longrightarrow 0$$ Hence, we have 
$$ \textnormal{Ext}_{Mod\text{-}\mathcal C}^p(\mathcal{M},\mathcal{I})= H^p(Hom_{Mod\text{-}\mathcal C}(\mathcal{P}_*,\mathcal{I})),\ \ \ \ \ \ \ \ \ p\geq0$$
Since $\mathcal M$  and $\{\mathcal P_i\}_{i\geq 0}$ are finitely generated in $Mod\text{-}(\mathcal{C}\#H)$, we have $\mathcal{L}_{Mod\text{-}\mathcal C}(\mathcal{M},\mathcal{I}) = Hom_{Mod\text{-}\mathcal C}(\mathcal{M},\mathcal{I})$ and $\mathcal{L}_{Mod\text{-}\mathcal C}(\mathcal{P}_i,\mathcal{I}) = Hom_{Mod\text{-}\mathcal C}(\mathcal{P}_i,\mathcal{I})$ for every $i\geq0$, by Proposition \ref{Mfg}. From Proposition \ref{exact}, we know that $\mathcal{L}_{Mod\text{-}\mathcal C}(
-,\mathcal I)$ is exact and it follows that $H^p(\mathcal{L}_{Mod\text{-}\mathcal C}(\mathcal{P}_*,\mathcal{I}))=0$ for all $p>0$. This proves the result.
\end{proof}

\begin{prop}\label{3.14}
Let $\mathcal{C}$ be a left $H$-locally finite category which is  right noetherian.  Let $\mathcal{M}$ and $\mathcal{N}$ be in $mod$-$(\mathcal{C}\#H)$ with $\mathcal{M}$ finitely generated in $Mod$-$(\mathcal{C}\#H)$ and let $\mathcal{E}^*$ be an injective resolution of $\mathcal{N}$ in $mod$-$(\mathcal{C}\#H)$. Then,
$$\textnormal{Ext}_{Mod\text{-}\mathcal C}^p(\mathcal{M},\mathcal{N})=H^p\big(Hom_{Mod\text{-}\mathcal C}(\mathcal{M},\mathcal{E}^*)\big), \quad p \geq 0.$$
\end{prop}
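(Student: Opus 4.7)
The plan is to invoke the standard acyclic resolution lemma: if $F$ is a left exact functor and $0\to \mathcal{N}\to \mathcal{E}^0\to \mathcal{E}^1\to\cdots$ is a resolution of $\mathcal{N}$ in which each $\mathcal{E}^i$ is $F$-acyclic (in the sense that $R^qF(\mathcal{E}^i)=0$ for all $q>0$), then $R^pF(\mathcal{N})\cong H^p(F(\mathcal{E}^*))$. I will apply this to the functor $F=Hom_{Mod\text{-}\mathcal C}(\mathcal M,-):Mod\text{-}\mathcal C\longrightarrow Vect_K$, whose higher derived functors are by definition $\textnormal{Ext}^p_{Mod\text{-}\mathcal C}(\mathcal M,-)$.

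First I would observe that the resolution $0\to \mathcal N\to \mathcal E^0\to \mathcal E^1\to\cdots$, although constructed inside $mod\text{-}(\mathcal C\#H)$, is automatically exact in $Mod\text{-}\mathcal C$: exactness in either category reduces to exactness of the underlying $K$-vector space sequence at each $X\in Ob(\mathcal C)$, because $mod\text{-}(\mathcal C\#H)\hookrightarrow Mod\text{-}(\mathcal C\#H)$ is closed under kernels and cokernels (as noted earlier) and the forgetful functor to $Mod\text{-}\mathcal C$ preserves and reflects exactness.

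Second, I would feed the hypotheses of Proposition \ref{freeres} into each term of the resolution: since $\mathcal C$ is left $H$-locally finite and right noetherian, and since $\mathcal M\in mod\text{-}(\mathcal C\#H)$ is finitely generated in $Mod\text{-}(\mathcal C\#H)$, Proposition \ref{freeres} applies to the injective objects $\mathcal E^i$ of $mod\text{-}(\mathcal C\#H)$ and yields $\textnormal{Ext}^p_{Mod\text{-}\mathcal C}(\mathcal M,\mathcal E^i)=0$ for all $p>0$ and all $i\geq 0$. Thus every $\mathcal E^i$ is acyclic for the functor $Hom_{Mod\text{-}\mathcal C}(\mathcal M,-)$.

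Third, I would apply the acyclic resolution lemma. A clean way to see it is to break $\mathcal E^*$ into short exact sequences $0\to Z^i\to \mathcal E^i\to Z^{i+1}\to 0$ with $Z^0=\mathcal N$ and use the long exact sequence of $\textnormal{Ext}_{Mod\text{-}\mathcal C}(\mathcal M,-)$: acyclicity of $\mathcal E^i$ gives the dimension-shifting isomorphisms $\textnormal{Ext}^p_{Mod\text{-}\mathcal C}(\mathcal M,Z^{i+1})\cong \textnormal{Ext}^{p+1}_{Mod\text{-}\mathcal C}(\mathcal M,Z^i)$ for $p\geq 1$ and an identification in degree zero, after which a straightforward induction identifies $\textnormal{Ext}^p_{Mod\text{-}\mathcal C}(\mathcal M,\mathcal N)$ with the $p$-th cohomology of the complex $Hom_{Mod\text{-}\mathcal C}(\mathcal M,\mathcal E^*)$. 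No step here is an obstacle in the usual sense; the only content is the acyclicity input from Proposition \ref{freeres}, which itself was the substantive work done earlier, so this proposition is essentially a formal corollary of \ref{freeres} combined with the general acyclic resolution principle.
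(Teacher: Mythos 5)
Your proof is correct, and it reaches the conclusion by a genuinely different route than the paper. You invoke the classical acyclic-resolution (dimension-shifting) lemma for the left exact functor $Hom_{Mod\text{-}\mathcal{C}}(\mathcal{M},-)$ on $Mod\text{-}\mathcal{C}$, after checking that $\mathcal{E}^*$ remains exact in $Mod\text{-}\mathcal{C}$ and that each $\mathcal{E}^i$ is acyclic by Proposition \ref{freeres}; both checks are valid (exactness in $mod\text{-}(\mathcal{C}\#H)$ is objectwise by the lemma establishing that this category is abelian, and the hypotheses of Proposition \ref{freeres} are met for each $\mathcal{E}^i$). The paper instead restricts the family $\{\textnormal{Ext}^p_{Mod\text{-}\mathcal{C}}(\mathcal{M},-)\}_{p\geq 0}$ to $mod\text{-}(\mathcal{C}\#H)$, shows via the free resolution $\mathcal{P}_*$ of Proposition \ref{freeres} and Proposition \ref{Mfg} that it is a $\delta$-functor valued in $H\text{-}mod$, shows it is effaceable (again by Proposition \ref{freeres}), and identifies it with the universal $\delta$-functor $\{R^p\mathcal{L}_{Mod\text{-}\mathcal{C}}(\mathcal{M},-)\}_{p\geq 0}$ on $mod\text{-}(\mathcal{C}\#H)$. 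So both arguments rest on exactly the same substantive input; yours packages it more economically, while the paper's packaging buys something you do not explicitly get: the identification is established as an isomorphism of $H$-modules, natural in $\mathcal{N}$ and compatible with connecting morphisms, which is what Theorem \ref{Tb4.18} actually needs when it applies $R^p(-)^H$ to $\textnormal{Ext}^q_{Mod\text{-}\mathcal{C}}(\mathcal{M},\mathcal{N})$. Your argument as written produces the isomorphism only in $Vect_K$; it upgrades without difficulty (the short exact sequences $0\to Z^i\to\mathcal{E}^i\to Z^{i+1}\to 0$ live in $mod\text{-}(\mathcal{C}\#H)$, so all the long exact sequences can be taken in $H\text{-}Mod$ and the dimension-shifting maps are $H$-linear), but that remark should be added if the result is to feed into the spectral sequence downstream.
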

\begin{proof}
Let $\mathcal{P}_*$ be the free resolution of $\mathcal{M}$ in $Mod$-$\mathcal{C}$ constructed as in the proof of Proposition \ref{freeres}. Then, we have
\begin{equation*}
\textnormal{Ext}_{Mod\text{-}\mathcal{C}}^p(\mathcal{M},\mathcal{N})=H^p\big(Hom_{Mod\text{-}\mathcal{C}}(\mathcal{P}_*,\mathcal{N})\big)=H^p\big(\mathcal{L}_{Mod\text{-}\mathcal{C}}(\mathcal{P}_*,\mathcal{N})\big)
\end{equation*}
where the second equality follows from Proposition \ref{Mfg}. Since $H$-$mod$ is an abelian category and $\mathcal{L}_{Mod\text{-}\mathcal{C}}(\mathcal{P}_*,\mathcal{N})$ is a complex in $H$-$mod$,  it follows that $H^p\big(\mathcal{L}_{Mod\text{-}\mathcal{C}}(\mathcal{P}_*,\mathcal{N})\big)\in H$-$mod$. Hence, we may consider the family $\{\textnormal{Ext}_{Mod\text{-}\mathcal{C}}^p(\mathcal{M},-)\}_{p\geq 0}$ as  a $\delta$-functor from $mod$-$(\mathcal C\# H)$ to $H$-$mod$.

\smallskip
  By Proposition \ref{freeres}, $\textnormal{Ext}_{Mod\text{-}\mathcal{C}}^p(\mathcal{M},\mathcal{I})=0, ~p>0$ for every injective object $\mathcal{I}$ in $mod\text{-}(\mathcal{C} \#H)$. Since $mod$-$(\mathcal C\# H)$ has enough injectives, it follows
  that each $\textnormal{Ext}_{Mod\text{-}\mathcal{C}}^p(\mathcal{M},-):mod$-$(\mathcal C\# H)\longrightarrow H$-$mod$ is effaceable (see, for instance, \cite[$\S$ III.1]{Hart}). 
  
  \smallskip Since $mod$-$(\mathcal C\# H)$ has enough injectives, we can consider the right derived functors
  \begin{equation*} R^p\mathcal{L}_{Mod\text{-}\mathcal{C}}(\mathcal{M},-):mod\text{-}(\mathcal{C} \#H)\longrightarrow H\text{-}mod
  \end{equation*} For $p=0$, we notice that
$\textnormal{Ext}_{Mod\text{-}\mathcal{C}}^0(\mathcal{M},-)=Hom_{Mod\text{-}\mathcal{C}}(\mathcal{M},-)=\mathcal{L}_{Mod\text{-}\mathcal{C}}(\mathcal{M},-)=R^0\mathcal{L}_{Mod\text{-}\mathcal{C}}(\mathcal{M},-)$ as functors from $mod\text{-}(\mathcal{C} \#H)$ to $H$-$mod$. Since each $\textnormal{Ext}_{Mod\text{-}\mathcal{C}}^p(\mathcal{M},-)$ is effaceable for $p>0$, the family $\{\textnormal{Ext}_{Mod\text{-}\mathcal{C}}^p(\mathcal{M},-)\}_{p\geq 0}$ forms a universal $\delta$-functor and it follows from \cite[Corollary III.1.4]{Hart} that
\begin{equation*}\textnormal{Ext}_{Mod\text{-}\mathcal{C}}^p(\mathcal{M},-)=R^p\mathcal{L}_{Mod\text{-}\mathcal{C}}(\mathcal{M},-):mod\text{-}(\mathcal{C} \#H)\longrightarrow H\text{-}mod
\end{equation*} for every $p\geq 0$. Therefore, we have
$$\textnormal{Ext}_{Mod\text{-}\mathcal{C}}^p(\mathcal{M},\mathcal{N})=(R^p\mathcal{L}_{Mod\text{-}\mathcal{C}}(\mathcal{M},-))(\mathcal{N})=H^p(\mathcal{L}_{Mod\text{-}\mathcal{C}}(\mathcal{M},\mathcal{E}^*))=H^p\big(Hom_{Mod\text{-}\mathcal{C}}(\mathcal{M},\mathcal{E}^*)\big)$$ 

\end{proof}

\begin{theorem}\label{Tb4.18} Let $\mathcal{C}$ be a left $H$-locally finite category which is  right noetherian. Fix $\mathcal{M}\in mod$-$(\mathcal{C}\#H)$ with $\mathcal{M}$ finitely generated in $Mod$-$(\mathcal{C}\#H)$. We consider the functors
\begin{equation*}
\begin{array}{c} \mathscr F=Hom_{Mod\text{-}\mathcal{C}}(\mathcal{M},-):mod\text{-}(\mathcal{C}\#H)\longrightarrow H\text{-}mod
\qquad  \mathcal{N}\mapsto Hom_{Mod\text{-}\mathcal{C}}(\mathcal{M},\mathcal{N})\\
\mathscr G=(-)^H:H\text{-}mod\longrightarrow Vect_K\qquad M\mapsto M^H\\
\end{array}
\end{equation*} Then, we have the following spectral sequence
$$R^{p}(-)^H\big(\textnormal{Ext}^q_{Mod\text{-}\mathcal{C}}(\mathcal{M},\mathcal{N})\big)\Rightarrow \left(R^{p+q}Hom_{mod\text{-}(\mathcal{C}\#H)}(\mathcal{M},-)\right)(\mathcal{N})$$
\end{theorem}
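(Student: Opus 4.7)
The plan is to apply the Grothendieck spectral sequence theorem for the composition $\mathscr{G}\circ\mathscr{F}$, exactly in the spirit of Theorems \ref{Tc3.15} and \ref{Tb4.19}. First I would check that $\mathscr{F}$ is well-defined as a functor with codomain $H$-$mod$: since $\mathcal{M}$ is finitely generated in $Mod\text{-}(\mathcal{C}\#H)$ and any $\mathcal{N}\in mod\text{-}(\mathcal{C}\#H)$ is $H$-locally finite, Proposition \ref{Mfg} gives that $Hom_{Mod\text{-}\mathcal{C}}(\mathcal{M},\mathcal{N})$ is $H$-locally finite, so it lies in $H$-$mod$. Next I would identify $\mathscr{G}\circ\mathscr{F}$ with $Hom_{mod\text{-}(\mathcal{C}\#H)}(\mathcal{M},-)$: by Proposition \ref{2.5} we have $Hom_{Mod\text{-}\mathcal{C}}(\mathcal{M},\mathcal{N})^H=Hom_{Mod\text{-}(\mathcal{C}\#H)}(\mathcal{M},\mathcal{N})$, and since $mod\text{-}(\mathcal{C}\#H)$ is a full subcategory of $Mod\text{-}(\mathcal{C}\#H)$, this equals $Hom_{mod\text{-}(\mathcal{C}\#H)}(\mathcal{M},\mathcal{N})$.

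The core input that makes the spectral sequence collapse nicely is the identification of the derived functors of $\mathscr{F}$. By Proposition \ref{3.14}, if $\mathcal{E}^*$ is an injective resolution of $\mathcal{N}$ in $mod\text{-}(\mathcal{C}\#H)$, then $H^q(Hom_{Mod\text{-}\mathcal{C}}(\mathcal{M},\mathcal{E}^*))=\textnormal{Ext}^q_{Mod\text{-}\mathcal{C}}(\mathcal{M},\mathcal{N})$, which gives the identification $R^q\mathscr{F}(\mathcal{N})=\textnormal{Ext}^q_{Mod\text{-}\mathcal{C}}(\mathcal{M},\mathcal{N})$ as the $E_2^{\bullet,q}$ page input of the spectral sequence.

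The remaining hypothesis for the Grothendieck spectral sequence is that $\mathscr{F}$ sends injectives in $mod\text{-}(\mathcal{C}\#H)$ to $\mathscr{G}$-acyclic (preferably injective) objects in $H$-$mod$. I would combine Proposition \ref{Mfg} and Lemma \ref{inj}(2)(i): for an injective $\mathcal{I}$ in $mod\text{-}(\mathcal{C}\#H)$, Proposition \ref{Mfg} gives $\mathscr{F}(\mathcal{I})=Hom_{Mod\text{-}\mathcal{C}}(\mathcal{M},\mathcal{I})=\mathcal{L}_{Mod\text{-}\mathcal{C}}(\mathcal{M},\mathcal{I})$, and Lemma \ref{inj}(2)(i) then shows this is injective in $H$-$mod$, hence $(-)^H$-acyclic. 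This is the step where the running hypotheses ($\mathcal{C}$ being $H$-locally finite and $\mathcal{M}$ being finitely generated) are both essential, and I expect it to be the main point that needs care.

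Finally, since $mod\text{-}(\mathcal{C}\#H)$ has enough injectives (established earlier) and $H$-$mod$ is a Grothendieck category, the Grothendieck spectral sequence for composite functors \cite{Grothen} yields
\begin{equation*}
R^p\mathscr{G}(R^q\mathscr{F}(\mathcal{N}))\Rightarrow R^{p+q}(\mathscr{G}\circ\mathscr{F})(\mathcal{N}),
\end{equation*}
which, after substituting the identifications above, gives exactly the asserted spectral sequence.
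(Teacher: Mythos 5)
Your proposal is correct and follows essentially the same route as the paper's proof: it identifies $\mathscr{G}\circ\mathscr{F}$ via Proposition \ref{2.5} and fullness, computes $R^q\mathscr{F}$ via Proposition \ref{3.14}, and verifies that $\mathscr{F}$ preserves injectives using Proposition \ref{Mfg} together with Lemma \ref{inj}(2) before invoking the Grothendieck spectral sequence. The only (welcome) addition is your explicit check that $\mathscr{F}$ actually lands in $H$-$mod$, which the paper leaves implicit.
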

\begin{proof}
 Using Proposition \ref{2.5} and the fact that $mod$-$(\mathcal{C}\#H)$ is a full subcategory of $Mod$-$(\mathcal{C}\#H)$, we have $(\mathscr{G}\circ\mathscr{F})(\mathcal{N})=Hom_{Mod\text{-}\mathcal{C}}(\mathcal{M},\mathcal{N})^H=Hom_{mod\text{-}(\mathcal{C}\#H)}(\mathcal{M},\mathcal{N})$. By definition, 
 \begin{equation*}
 R^q\mathscr F(\mathcal N)=H^q(\mathscr F(\mathcal E^*))=H^q(Hom_{Mod\text{-}\mathcal C}(\mathcal M,\mathcal E^*))
 \end{equation*} where   $\mathcal{E}^*$ is an injective resolution of $\mathcal{N}$ in $mod$-$(\mathcal{C}\#H)$. By   Proposition \ref{3.14}, we get $ R^q\mathscr F(\mathcal N)=\textnormal{Ext}^q_{Mod\text{-}\mathcal C}(\mathcal M,\mathcal N)$.  For any injective $\mathcal{I}$ in $mod$-$(\mathcal{C}\# H)$, we know that $\mathscr{F}(\mathcal{I})$ is an injective in $H$-$mod$ by Proposition \ref{Mfg} and   Lemma \ref{inj}(2). Since the category $H\text{-}mod$ has enough injectives (see,\cite[Lemma 1.4]{Gue}), the result now follows from Grothendieck spectral sequence for composite functors (see \cite{Grothen}).
\end{proof}

\section{Cohomology of relative $(\mathcal D,H)$-Hopf modules}

 Let $\mathcal{D}$ be a right co-$H$-category. In the notation of Definition \ref{coHcat}, for any $X$, $Y\in
 Ob(\mathcal D)$, there is an $H$-coaction  on the $K$-vector space $Hom_{\mathcal{D}}(X,Y)$ given  by $\rho_{XY}(f):=\sum f_0 \otimes f_1$. In this section, we will study the relative Hopf modules over the category $\mathcal{D}$ and describe their
 derived $Hom$-functors by means of spectral sequences. 

\smallskip
We denote by $Comod$-$H$ the category   of right $H$-comodules. If $M$ is an $H$-comodule with right $H$-coaction given by
$\rho_M:M\longrightarrow M\otimes H$, we set $M^{coH}:=\{m\in M~|~ \rho_M(m)=m \otimes 1_H\}$ to be the coinvariants
of $M$. 

\begin{definition}\label{Dr5.1}
Let $\mathcal{D}$ be a right co-$H$-category. Let $\mathcal{M}$ be a left $\mathcal{D}$-module with a given right $H$-comodule structure $\rho_{\mathcal{M}(X)}:\mathcal{M}(X) \longrightarrow \mathcal{M}(X) \otimes H$, $m \mapsto \sum m_0 \otimes m_1$ on
$\mathcal M(X)$  for each $X$ in $Ob(\mathcal{D})$. Then, $\mathcal{M}$ is said to be a relative $(\mathcal{D},H)$-Hopf module if the following condition holds:
\begin{equation}\label{relative}
\rho_{\mathcal{M}(Y)}\big(\mathcal{M}(f)(m)\big) = \sum \mathcal{M}(f_0)(m_0)\otimes f_1m_1
\end{equation}
for any $f\in Hom_{\mathcal{D}}(X,Y)$ and $m\in \mathcal{M}(X)$.
We denote by $_{\mathcal{D}}{\mathscr{M}}^H$ the category whose objects are relative $(\mathcal{D},H)$-Hopf modules and whose morphisms are given by
$$Hom_{_{\mathcal{D}}{\mathscr{M}}^H}(\mathcal{M},\mathcal{N}):=\{\eta \in Hom_{\mathcal{D}\text{-}Mod}(\mathcal{M},\mathcal{N})\ |\ \eta(X):\mathcal{M}(X) \longrightarrow \mathcal{N}(X)~ \text{is $H$-colinear}~ \forall X \in Ob(\mathcal{D}) \}.$$
\end{definition}
 
We now recall the tensor product of $H$-comodules. Let $M,N \in Comod$-$H$ with $H$-coactions $\rho_M$ and $\rho_{N}$, respectively. Then, $M\otimes N \in Comod$-$H$ with $H$-coaction given by $\rho_{M \otimes N}:=(id \otimes id \otimes m_H)(id\otimes \tau \otimes id)(\rho_M\otimes\rho_N)$, where $m_H$ denotes the multiplication on $H$ and $id\otimes \tau\otimes id: M\otimes (H\otimes N)\otimes H\longrightarrow M\otimes (N\otimes H)\otimes H $ denotes the twist map. In other words, $\rho_{M \otimes N}(m\otimes n)= \sum m_0\otimes n_0\otimes m_1n_1$ for $m \otimes n \in M \otimes N$.

\begin{lemma}\label{tenprod}
Let $M \in Comod\text{-}H$ and $\mathcal{N}\in  {_{\mathcal{D}}}{\mathscr{M}}^H$. Then, $\mathcal{N}\otimes M$  defined by setting  
\begin{equation*}
\begin{array}{c}
(\mathcal{N}\otimes M)(X) := \mathcal{N}(X) \otimes M\\
(\mathcal{N}\otimes M)(f)(n \otimes m) := \mathcal{N}(f)(n)\otimes m
\end{array}
\end{equation*}
for each $X \in \text{Ob}(\mathcal{D}),~ f \in Hom_{\mathcal{D}}(X,Y)$ and $n \otimes m \in \mathcal{N}(X) \otimes M$ is a relative
$(\mathcal D,H)$-Hopf module. 
\end{lemma}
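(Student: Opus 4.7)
The plan is to verify the three separate pieces that constitute being a relative $(\mathcal{D},H)$-Hopf module: functoriality of $\mathcal{N}\otimes M$ as a left $\mathcal{D}$-module, the right $H$-comodule structure on each $(\mathcal{N}\otimes M)(X)$, and the compatibility condition \eqref{relative}.

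First, functoriality of $\mathcal{N}\otimes M$. Since the assignment $(n\otimes m)\mapsto \mathcal{N}(f)(n)\otimes m$ acts as $\mathcal{N}(f)\otimes \mathrm{id}_M$, the identities $(\mathcal{N}\otimes M)(\mathrm{id}_X)=\mathrm{id}$ and $(\mathcal{N}\otimes M)(f\circ g)=(\mathcal{N}\otimes M)(f)\circ (\mathcal{N}\otimes M)(g)$ are immediate consequences of the corresponding facts for $\mathcal{N}$. Hence $\mathcal{N}\otimes M$ is a left $\mathcal{D}$-module.

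Next, the $H$-comodule structure. By the discussion recalled immediately above the lemma, since both $\mathcal{N}(X)$ and $M$ lie in $Comod\text{-}H$, the tensor product $\mathcal{N}(X)\otimes M$ is again a right $H$-comodule with coaction given by
\begin{equation*}
\rho_{\mathcal{N}(X)\otimes M}(n\otimes m)=\sum n_0\otimes m_0\otimes n_1 m_1.
\end{equation*}
Thus each $(\mathcal{N}\otimes M)(X)$ is a right $H$-comodule.

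Finally, the main step is verifying \eqref{relative}. For $f\in Hom_{\mathcal{D}}(X,Y)$ and $n\otimes m\in \mathcal{N}(X)\otimes M$, I compute the left-hand side using that $\mathcal{N}$ is itself a relative $(\mathcal{D},H)$-Hopf module:
\begin{equation*}
\rho_{(\mathcal{N}\otimes M)(Y)}\bigl((\mathcal{N}\otimes M)(f)(n\otimes m)\bigr)=\rho_{\mathcal{N}(Y)\otimes M}\bigl(\mathcal{N}(f)(n)\otimes m\bigr)=\sum \mathcal{N}(f_0)(n_0)\otimes m_0\otimes f_1 n_1 m_1,
\end{equation*}
while the right-hand side expands, using the diagonal coaction on $\mathcal{N}(X)\otimes M$, as
\begin{equation*}
\sum (\mathcal{N}\otimes M)(f_0)(n_0\otimes m_0)\otimes f_1 n_1 m_1=\sum \mathcal{N}(f_0)(n_0)\otimes m_0\otimes f_1 n_1 m_1.
\end{equation*}
These two expressions coincide, so \eqref{relative} holds. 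I expect no real obstacle; the only thing to be careful about is keeping the Sweedler indices straight when combining the coaction of $\mathcal{N}$ applied to $\mathcal{N}(f)(n)$ with the diagonal coaction used on the tensor product.
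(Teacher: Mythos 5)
Your proposal is correct and follows essentially the same route as the paper: identify the left $\mathcal{D}$-module structure, equip each $\mathcal{N}(X)\otimes M$ with the standard tensor-product coaction $\sum n_0\otimes m_0\otimes n_1m_1$, and verify \eqref{relative} by applying the relative Hopf module condition for $\mathcal{N}$ to $\mathcal{N}(f)(n)$. The paper writes the verification as a single chain of equalities rather than expanding both sides separately, but the computation is identical.
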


\begin{proof}
Clearly, $\mathcal{N}\otimes M$ is a left $\mathcal{D}$-module. Since $\mathcal{N}(X)$ is a right $H$-comodule, $\mathcal{N}(X) \otimes M$ also carries  a right $H$-comodule structure for each $X \in \text{Ob}(\mathcal{D})$. For any $ f\in Hom_{\mathcal{D}}(X,Y)$ and $n \otimes m \in \mathcal{N}(X) \otimes M$, we have
\begin{align*}
\rho\big((\mathcal{N} \otimes M)(f)(n \otimes m)\big) &= \rho\big(\mathcal{N}(f)(n)\otimes m \big)\\
&= \sum{(\mathcal{N}(f)(n))}_0 \otimes m_0 \otimes {(\mathcal{N}(f)(n))}_1m_1\\
&= \sum\mathcal{N}(f_0)(n_0) \otimes m_0 \otimes f_1n_1m_1\\
&= \sum(\mathcal{N} \otimes M)(f_0)(n_0 \otimes m_0)  \otimes f_1n_1m_1\\
&= \sum(\mathcal{N} \otimes M)(f_0)(n \otimes m)_0 \otimes f_1(n \otimes m)_1 
\end{align*}
This shows that $\mathcal N\otimes M$ satisfies the condition  \eqref{relative} in Definition \ref{Dr5.1}. 
\end{proof}

From Lemma \ref{tenprod}, it follows that the assignment $M \mapsto \mathcal{N} \otimes M$ defines a functor $\mathcal{N} \otimes (-):Comod\text{-}H \longrightarrow {_{\mathcal{D}}}{\mathscr{M}}^H$ for each $\mathcal{N} \in {_{\mathcal{D}}}{\mathscr{M}}^H$.

\smallskip
From the definition of a co-$H$-category, it is also clear that the $\mathcal{D}$-module ${_{X}}{\bf{h}}:=Hom_{\mathcal{D}}(X,{-}) $ lies in ${_{\mathcal{D}}}{\mathscr{M}}^H$ for each $X\in Ob(\mathcal{D})$. 

\begin{lemma}\label{gen1}
Let $\mathcal{M}$ be a relative $(\mathcal D,H)$-Hopf module and let $m\in \mathcal{M}(X)$ for some $X\in Ob(\mathcal{D})$. Then, there exists a finite dimensional $H$-comodule $W_m\subseteq \mathcal M(X)$ containing $m$ and a  morphism $
\eta_m:{_{X}}{\bf{h}} \otimes W_m \longrightarrow \mathcal{M}$ in  ${_{\mathcal{D}}}{\mathscr{M}}^H$ such that 
$\eta_m(X)(id_X\otimes m)=m$.
\end{lemma}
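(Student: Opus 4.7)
The plan is to take $W_m$ to be a finite dimensional $H$-subcomodule of $\mathcal M(X)$ containing $m$ (which exists by the fundamental theorem of comodules, since every element of an $H$-comodule lies in a finite dimensional subcomodule), and to define $\eta_m$ as the ``evaluation'' map sending $f\otimes w\in Hom_{\mathcal D}(X,Y)\otimes W_m$ to $\mathcal M(f)(w)\in\mathcal M(Y)$. Concretely, for each $Y\in Ob(\mathcal D)$ set
\begin{equation*}
\eta_m(Y): ({_X}{\bf h}\otimes W_m)(Y)=Hom_{\mathcal D}(X,Y)\otimes W_m\longrightarrow \mathcal M(Y),\qquad f\otimes w\mapsto \mathcal M(f)(w).
\end{equation*}
Plugging in $Y=X$ and $f=id_X$ gives $\eta_m(X)(id_X\otimes m)=\mathcal M(id_X)(m)=m$, as required.

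Next I would verify that $\eta_m$ is a morphism of left $\mathcal D$-modules. For $g\in Hom_{\mathcal D}(Y,Z)$ and $f\otimes w\in Hom_{\mathcal D}(X,Y)\otimes W_m$, the functoriality of $\mathcal M$ immediately yields $\eta_m(Z)(({_X}{\bf h}\otimes W_m)(g)(f\otimes w))=\mathcal M(gf)(w)=\mathcal M(g)(\mathcal M(f)(w))=\mathcal M(g)(\eta_m(Y)(f\otimes w))$. This is a routine check.

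The main point is to show that each $\eta_m(Y)$ is $H$-colinear, and this is precisely where the relative Hopf module condition \eqref{relative} enters. The coaction on $Hom_{\mathcal D}(X,Y)\otimes W_m$ (as prescribed by Lemma \ref{tenprod} applied to ${_X}{\bf h}$ and $W_m$) is the diagonal coaction $f\otimes w\mapsto \sum f_0\otimes w_0\otimes f_1 w_1$. Then
\begin{equation*}
\rho_{\mathcal M(Y)}(\eta_m(Y)(f\otimes w))=\rho_{\mathcal M(Y)}(\mathcal M(f)(w))=\sum \mathcal M(f_0)(w_0)\otimes f_1 w_1=\sum \eta_m(Y)(f_0\otimes w_0)\otimes f_1 w_1,
\end{equation*}
where the middle equality is exactly \eqref{relative}. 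This gives $\eta_m\in Hom_{{_{\mathcal D}}\mathscr M^H}({_X}{\bf h}\otimes W_m,\mathcal M)$.

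The only nontrivial ingredient is the existence of $W_m$, which is the fundamental theorem of comodules: writing $\rho_{\mathcal M(X)}(m)=\sum_{i=1}^{n} m_i\otimes h_i$ for finitely many $m_i\in \mathcal M(X)$ and $h_i\in H$, the subspace spanned by $\{m_i\}$ together with the coassociativity relation generates a finite dimensional subcomodule containing $m$. I expect no real obstacle here since all remaining steps are purely formal once this subcomodule is fixed.
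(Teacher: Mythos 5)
Your proposal is correct and coincides with the paper's own proof: the paper also takes $W_m$ to be a finite dimensional $H$-subcomodule of $\mathcal M(X)$ containing $m$ (citing the fundamental theorem of comodules) and defines $\eta_m(Y)(f\otimes w)=\mathcal M(f)(w)$, verifying $H$-colinearity via the relative Hopf module condition \eqref{relative} exactly as you do. No gaps.
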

\begin{proof}
Using 
\cite[Theorem 2.1.7]{SCS}, we know that there exists a finite dimensional $H$-subcomodule $W_m$ of $\mathcal{M}(X)$ containing $m$. We consider the $\mathcal{D}$-module morphism $\eta_m:{_{X}}{\bf{h}} \otimes W_m \longrightarrow \mathcal{M}$ defined by 
\begin{equation*}
\eta_m(Y)(f\otimes w):= \mathcal{M}(f)(w) 
\end{equation*}
for any $Y\in Ob(\mathcal{D})$, $f\in {_X}{\bf{h}}(Y)$ and $w\in W_m$. We now verify that $\eta_m$ is indeed a morphism in ${_{\mathcal{D}}}{\mathscr{M}}^H$, i.e., $\eta_m(Y)$ is $H$-colinear for each $Y\in Ob(\mathcal{D})$:
\begin{equation*}
\rho\big(\eta_m(Y)(f\otimes w)\big)=\rho\big(\mathcal{M}(f)(w)\big)=\sum\mathcal{M}(f_0)(w_0)\otimes f_1w_1=\sum \eta_m(Y)(f_0\otimes w_0)\otimes f_1w_1=\eta_m(Y)\left({(f\otimes w)}_0\right) \otimes {(f\otimes w)}_1
\end{equation*}

\end{proof}

\begin{remark}
\emph{It might be tempting to view Lemma \ref{gen1} as a Yoneda correspondence.  But, we note that the finite dimensional $H$-comodule and the morphism in Lemma \ref{gen1} determined by $m\in \mathcal{M}(X)$ need not be unique. }
\end{remark}

Given a morphism $\eta:\mathcal M\longrightarrow \mathcal N$ in ${_{\mathcal{D}}}{\mathscr{M}}^H$, it may be easily verified
that $Ker(\eta)$ and $Coker(\eta)$ determined by setting
\begin{equation}\label{epimonoy}
\begin{array}{c}
Ker(\eta)(X):=Ker(\eta(X):\mathcal M(X)\longrightarrow\mathcal N(X))\\
Coker(\eta)(X):=Coker(\eta(X):\mathcal M(X)\longrightarrow\mathcal N(X))\\
\end{array}
\end{equation} for each $X\in Ob(\mathcal D)$ are also relative $(\mathcal D,H)$-Hopf modules. It follows that  $\eta:\mathcal M\longrightarrow \mathcal N$ in ${_{\mathcal{D}}}{\mathscr{M}}^H$ is a monomorphism (resp. an epimorphism) if and only if it induces 
monomorphisms (resp. epimorphisms) $\eta(X):
\mathcal M(X)\longrightarrow\mathcal N(X)$ of $H$-comodules for each $X\in Ob(\mathcal D)$. 

\begin{prop}\label{fg(D,H)} 
Let $\mathcal{D}$ be a right co-$H$-category. Then, a module $\mathcal{M}\in {_{\mathcal{D}}}{\mathscr{M}}^H$ is finitely generated as an object in $\mathcal D\text{-}Mod$ if and only if there exists a finite dimensional $H$-comodule $W$ and an epimorphism 
\begin{equation*}
\left(\bigoplus_{i\in I} {_{X_i}}{\bf{h}} \right)\otimes W \longrightarrow \mathcal{M}
\end{equation*}
in  ${_{\mathcal{D}}}{\mathscr{M}}^H$, for finitely many objects $\{X_i\}_{i\in I}$  in $\mathcal{D}$.
\end{prop}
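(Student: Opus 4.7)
The plan is to prove both implications by adapting the argument of Proposition \ref{fg} to the relative Hopf setting, with Lemma \ref{gen1} playing the role of the local-finiteness hypothesis used in the left $H$-category case.

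For the ``if'' direction, suppose we have an epimorphism $\eta: \left(\bigoplus_{i\in I} {_{X_i}}{\bf{h}}\right) \otimes W \longrightarrow \mathcal{M}$ in ${_{\mathcal{D}}}{\mathscr{M}}^H$ with $W$ finite dimensional and $|I|<\infty$. Fixing a basis $\{w_1,\dots,w_k\}$ of $W$, I set $m_{ij} := \eta(X_i)(\iota_i(\mathrm{id}_{X_i})\otimes w_j)\in \mathcal{M}(X_i)$, where $\iota_i$ denotes the canonical inclusion into the direct sum. Given any $y \in \mathcal{M}(Y)$, surjectivity of $\eta(Y)$ together with the decomposition $\bigl(\bigoplus_{i\in I}{_{X_i}}{\bf h}\bigr)(Y)\otimes W=\bigoplus_{i,j}Hom_{\mathcal{D}}(X_i,Y)\otimes Kw_j$ expresses $y$ as a finite sum of terms $\eta(Y)(f_i\otimes w_j)$ with $f_i\in Hom_{\mathcal{D}}(X_i,Y)$, and $\mathcal{D}$-linearity of $\eta$ rewrites each such term as $\mathcal{M}(f_i)(m_{ij})$. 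Hence $\{m_{ij}\}$ is a finite generating family for $\mathcal{M}$ in $\mathcal{D}\text{-}Mod$.

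For the ``only if'' direction, let $\{m_i\in \mathcal{M}(X_i)\}_{i\in I}$ be a finite generating family. Lemma \ref{gen1} supplies, for each $i$, a finite-dimensional $H$-subcomodule $W_{m_i}\subseteq \mathcal{M}(X_i)$ containing $m_i$ together with a morphism $\eta_{m_i}:{_{X_i}}{\bf h}\otimes W_{m_i}\longrightarrow\mathcal{M}$ in ${_{\mathcal{D}}}{\mathscr{M}}^H$ satisfying $\eta_{m_i}(X_i)(\mathrm{id}_{X_i}\otimes m_i)=m_i$. I set $W:=\bigoplus_{i\in I}W_{m_i}$, which is a finite-dimensional $H$-comodule since $I$ is finite, and use the $H$-colinear projections $\pi_i:W\twoheadrightarrow W_{m_i}$ to extend via Lemma \ref{tenprod} each $\eta_{m_i}$ to a morphism $\tilde{\eta}_{m_i}:=\eta_{m_i}\circ(\mathrm{id}\otimes\pi_i):{_{X_i}}{\bf h}\otimes W\longrightarrow\mathcal{M}$ in ${_{\mathcal{D}}}{\mathscr{M}}^H$. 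Summing these yields a morphism $\eta:\bigl(\bigoplus_{i\in I}{_{X_i}}{\bf h}\bigr)\otimes W\longrightarrow\mathcal{M}$ in ${_{\mathcal{D}}}{\mathscr{M}}^H$. To see $\eta$ is epi, I write any $y\in \mathcal{M}(Y)$ as $y=\sum_i\mathcal{M}(f_i)(m_i)$ for appropriate $f_i\in Hom_{\mathcal{D}}(X_i,Y)$; then $y=\sum_i\eta_{m_i}(Y)(f_i\otimes m_i)$, which lies in the image of $\eta(Y)$ upon viewing each $m_i$ as an element of $W$ via its inclusion $W_{m_i}\hookrightarrow W$.

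The only real obstacle is bookkeeping: ensuring that the extension-by-zero (equivalently, precomposition with $\mathrm{id}\otimes\pi_i$) really produces morphisms in ${_{\mathcal{D}}}{\mathscr{M}}^H$ and not merely in $\mathcal{D}\text{-}Mod$. This is immediate because $\pi_i$ is $H$-colinear and Lemma \ref{tenprod} says tensoring a morphism in ${_{\mathcal{D}}}{\mathscr{M}}^H$ with an $H$-comodule map yields a morphism in ${_{\mathcal{D}}}{\mathscr{M}}^H$. Epimorphisms in ${_{\mathcal{D}}}{\mathscr{M}}^H$ are tested objectwise on the underlying vector spaces (as noted in \eqref{epimonoy}), so the argument above indeed produces an epimorphism in the relative Hopf module category.
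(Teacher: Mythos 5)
Your proposal is correct and follows essentially the same route as the paper: the ``only if'' direction uses Lemma \ref{gen1} to produce the subcomodules $W_{m_i}$ and the morphisms $\eta_{m_i}$, assembles $W=\bigoplus_{i\in I}W_{m_i}$, and sums the (diagonally extended) maps, while the ``if'' direction extracts the generators $m_{ij}=\eta(X_i)(\mathrm{id}_{X_i}\otimes w_j)$ using that epimorphisms in ${_{\mathcal{D}}}{\mathscr{M}}^H$ are objectwise. Your explicit use of the projections $\pi_i$ is just a cleaner packaging of the paper's formula $\eta(Y)(\{f_i\}\otimes\{w_j\})=\sum_i\mathcal{M}(f_i)(w_i)$.
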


\begin{proof}
Let $\mathcal{M}\in {_{\mathcal{D}}}{\mathscr{M}}^H$ be finitely generated as a $\mathcal D$-module. Then, there exists a finite collection $\{m_i\in el(\mathcal M)\}_{i\in I}$  such that every $y \in el(\mathcal M)$  has the form $y=\sum_{i\in I} \mathcal{M}(f_i)(m_i)$ for some $f_i \in Hom_{\mathcal{D}}(|m_i|,|y|)$. Applying Lemma \ref{gen1}, we can obtain for each $m_i$ a finite dimensional $H$-subcomodule  $W_{m_i}\subseteq \mathcal{M}(|m_i|)$ containing $m_i$ and   a morphism $\eta_{m_i}:{_{|m_i|}}{\bf{h}} \otimes W_{m_i} \longrightarrow \mathcal{M}$ in ${_{\mathcal{D}}}{\mathscr{M}}^H$. Setting $W:=\bigoplus_{i\in I} W_{m_i}$, we have an epimorphism in ${_{\mathcal{D}}}{\mathscr{M}}^H$ determined by 
\begin{equation*}
\eta:\left(\bigoplus_{i\in I}{_{|m_i|}}{\bf{h}}\right) \otimes W \longrightarrow \mathcal{M},~~~~~~~\eta(Y)(\{f_i\}_{i\in I}\otimes \{w_{j}\}_{j\in I}):= \sum_{i\in I}\mathcal{M}(f_i)(w_i)
\end{equation*}
for each $Y \in Ob(\mathcal{D})$, $f_i \in {_{|m_i|}}{\bf{h}}(Y)$ and $w_i\in W_{m_i}$.

\smallskip
 Conversely, let $\{w_1,\ldots,w_k\}$ be a basis of a finite dimensional $H$-comodule $W$ and $\{X_i\}_{i\in I}$ be finitely many objects in $\mathcal{D}$ such that we have an epimorphism 
\begin{equation*}
\eta: \left(\bigoplus_{i\in I} {_{X_i}}{\bf{h}}\right)\otimes W  \longrightarrow \mathcal{M}
\end{equation*} 
in ${_{\mathcal{D}}}{\mathscr{M}}^H$. From the discussion above, it follows that $\eta(Y):\left(\bigoplus_{i\in I} {_{X_i}}{\bf{h}}(Y)\right)\otimes W\longrightarrow \mathcal M(Y)$ is an epimorphism in $Comod$-$H$ for each $Y\in Ob(\mathcal D)$.   Then,  the elements 
$\{m_{ij}:=\eta(X_i)\big(\text{id}_{X_i} \otimes w_j\big)\}_{i\in I,1\leq j\leq k} $ form a family of generators for $\mathcal{M}$ as
a $\mathcal D$-module.
\end{proof}

We will now show that $_{\mathcal{D}}{\mathscr{M}}^H$ is a Grothendieck category. This essentially follows from the fact that both $\mathcal{D}$-$Mod$ and $Comod$-$H$ are Grothendieck categories. We refer the reader, for instance, to \cite[Corollary 2.2.8]{SCS}, for a proof of $Comod$-$H$ being a Grothendieck category.

\begin{prop}
Let $\mathcal{D}$ be a right co-$H$-category.  Then, the category $_{\mathcal{D}}{\mathscr{M}}^H$ of 
relative $(\mathcal D,H)$-Hopf modules is a Grothendieck category.
\end{prop}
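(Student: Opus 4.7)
The strategy is to verify the three hallmarks of a Grothendieck category in turn: abelianness, existence of a generator, and exactness of filtered colimits, by reducing each property to the analogous property in the Grothendieck categories $\mathcal{D}\text{-}Mod$ and $Comod\text{-}H$ and working objectwise.

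First, I would establish that $_{\mathcal{D}}{\mathscr{M}}^H$ is abelian. Kernels and cokernels of morphisms $\eta : \mathcal M \longrightarrow \mathcal N$ in $_{\mathcal{D}}{\mathscr{M}}^H$ are already given by \eqref{epimonoy}, where each $Ker(\eta)(X) \subseteq \mathcal M(X)$ and $Coker(\eta)(X)$ inherit $H$-comodule structures (since $\eta(X)$ is $H$-colinear) and the $\mathcal D$-module structure maps respect these by a direct check of the compatibility \eqref{relative}. Finite biproducts are defined objectwise as $(\mathcal M \oplus \mathcal N)(X) := \mathcal M(X) \oplus \mathcal N(X)$, carrying the obvious $\mathcal D$-action and direct-sum $H$-coaction, and \eqref{relative} passes to direct sums. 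Combined with the monomorphism/epimorphism description just after \eqref{epimonoy}, this makes $_{\mathcal{D}}{\mathscr{M}}^H$ abelian.

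Next, I would construct arbitrary small (co)limits. For any small diagram $\{\mathcal M_i\}_{i \in I}$ in $_{\mathcal{D}}{\mathscr{M}}^H$, I define the limit/colimit objectwise at each $X \in Ob(\mathcal D)$, using the (co)limit in $Comod\text{-}H$; the resulting left $\mathcal D$-module structure comes from the (co)limit in $\mathcal D\text{-}Mod$. One then checks that \eqref{relative} persists under these pointwise (co)limits, which is a routine naturality verification since both the $\mathcal D$-action and $H$-coaction are built coordinatewise. In particular, $\bigoplus_i \mathcal M_i$ and filtered colimits $\varinjlim \mathcal M_i$ exist in $_{\mathcal{D}}{\mathscr{M}}^H$ and are computed objectwise. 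Because filtered colimits are exact in $Comod\text{-}H$ and in $Vect_K$, and monomorphisms/epimorphisms in $_{\mathcal{D}}{\mathscr{M}}^H$ are detected objectwise, filtered colimits in $_{\mathcal{D}}{\mathscr{M}}^H$ are exact.

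Finally, the main point is producing a generator. I would use Lemma \ref{gen1}: given any $\mathcal M \in {_{\mathcal{D}}}{\mathscr{M}}^H$ and any $m \in \mathcal M(X)$, there is a finite-dimensional $H$-subcomodule $W_m \subseteq \mathcal M(X)$ containing $m$ together with a morphism $\eta_m : {_X}{\bf h} \otimes W_m \longrightarrow \mathcal M$ in $_{\mathcal{D}}{\mathscr{M}}^H$ with $m$ in the image. Consequently, the collection
\begin{equation*}
\mathcal G := \{\, {_X}{\bf h} \otimes W \,\mid\, X \in Ob(\mathcal D),\ W \text{ a finite dimensional } H\text{-comodule}\,\}
\end{equation*}
(taken up to isomorphism, which yields a set since $Ob(\mathcal D)$ is small and finite-dimensional $H$-comodules form a set up to iso) is a generating family. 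Taking a direct sum of one representative from each isomorphism class in $\mathcal G$ yields a single generator. The main obstacle is precisely this step, making sure that the class of such modules forms a \emph{set}; this is handled by the fundamental theorem of comodules (finite-dimensional $H$-comodules form an essentially small category) together with the smallness of $Ob(\mathcal D)$. Combining the abelian structure, the exactness of filtered colimits, and the generator then shows $_{\mathcal{D}}{\mathscr{M}}^H$ is Grothendieck.
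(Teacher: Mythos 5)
Your proposal is correct and follows essentially the same route as the paper: inherit the abelian structure, (co)limits and exactness of filtered colimits objectwise from $\mathcal{D}$-$Mod$ and $Comod$-$H$, and then invoke Lemma \ref{gen1} to show that the collection $\{{_X}{\bf h}\otimes W\}$, with $X\in Ob(\mathcal D)$ and $W$ ranging over isomorphism classes of finite dimensional $H$-comodules, is a generating family. Your explicit attention to the set-theoretic point (that this collection is a set, via the fundamental theorem of comodules) is a welcome addition but does not change the argument.
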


\begin{proof}
Since the categories $\mathcal{D}$-$Mod$ and $Comod$-$H$ have kernels, cokernels and coproducts (direct sums), so does the category  $_{\mathcal{D}}{\mathscr{M}}^H$. The remaining properties of an abelian category are inherited by $_{\mathcal{D}}{\mathscr{M}}^H$ from $\mathcal{D}$-$Mod$. Hence, $_{\mathcal{D}}{\mathscr{M}}^H$ is a cocomplete abelian category. Directs limits are  exact in $_{\mathcal{D}}{\mathscr{M}}^H$ which is also a property inherited from $\mathcal{D}$-$Mod$. We are now left to check that $_{\mathcal{D}}{\mathscr{M}}^H$ has a family of generators. For any $\mathcal{M}$ in ${_{\mathcal{D}}}{\mathscr{M}}^H$, it follows from Lemma \ref{gen1} that we can find an epimorphism 
$$\bigoplus_{m\in el(\mathcal{M})}\eta_m:\bigoplus_{m\in el(\mathcal{M})}{_{|m|}\bf{h}}\otimes W_m\longrightarrow \mathcal{M}$$
in $_{\mathcal{D}}{\mathscr{M}}^H$. Thus, the collection $\{_X{\bf h}\otimes W\}$, where $X$ ranges over all objects in $\mathcal{D}$ and $W$ ranges over all (isomorphism classes of) finite dimensional $H$-comodules, forms a generating family for ${_{\mathcal{D}}}{\mathscr{M}}^H$ (see, for instance, the proof of \cite[Proposition 1.9.1]{Grothen}).
\end{proof}

For $\mathcal{N} \in {_{\mathcal{D}}}{\mathscr{M}}^H$, we consider the functor $\mathcal{N}\otimes {(-)}:Comod\textnormal{-}H$ $\longrightarrow {_{\mathcal{D}}}{\mathscr{M}}^H$ given by 
$M\mapsto\mathcal{N}\otimes M$. We see that  $Comod\textnormal{-}H$ is a Grothendieck category and the functor $\mathcal{N}\otimes {(-)}$ preserves colimits. Therefore, by a classical result \cite[Proposition 8.3.27(iii)]{KS}, it has a right adjoint which we denote by $\mathscr{R}_{\mathcal{N}}: {_{\mathcal{D}}}{\mathscr{M}}^H\longrightarrow Comod\textnormal{-}H$. We then define
\begin{equation}\label{HOM}
HOM_{\mathcal{D}{\text{-}}Mod}(\mathcal{N},\mathcal{P}):=\mathscr{R}_{\mathcal{N}}(\mathcal{P})
\end{equation}
for any $\mathcal{P} \in {_{\mathcal{D}}}{\mathscr{M}}^H$. Thus, we have a natural isomorphism
\begin{equation}\label{iso}
Hom_{{_{\mathcal{D}}}{\mathscr{M}}^H}\big(\mathcal{N}\otimes M,\mathcal{P}\big) \xrightarrow{\cong} Hom_{Comod\text{-}H}\big(M, HOM_{\mathcal{D}\text{-}Mod}(\mathcal{N},\mathcal{P})\big)
\end{equation}
for $\mathcal{N},\mathcal{P} \in {_{\mathcal{D}}}{\mathscr{M}}^H$ and $M \in Comod$-$H$.  In particular, when $\mathcal{D}$ is a right co-$H$-category with a  single object, i.e., a right $H$-comodule algebra, then $\mathcal{N}$ and $\mathcal{P}$ are  relative Hopf-modules in the classical sense of Takeuchi \cite{t}. Then, using \cite[Lemma 2.3]{CanGue}, the definition of $HOM$ as in \eqref{HOM} recovers the standard definition of rational morphisms between relative Hopf modules as in  \cite[$\S$ 2]{CanGue} or \cite{Ulb}. As such, we will refer to $HOM_{\mathcal D\text{-}Mod}(-,-)$ as the  ``rational $Hom$
object'' in ${_{\mathcal{D}}}{\mathscr{M}}^H$. 

\begin{corollary}\label{H-coH}
 Let $\mathcal{N},\mathcal{P}\in{_{\mathcal{D}}}{\mathscr{M}}^H$. Then, $HOM_{\mathcal{D}\text{-}Mod}(\mathcal{N},\mathcal{P})^{coH}=Hom_{{_{\mathcal{D}}}{\mathscr{M}}^H}(\mathcal{N},\mathcal{P})$.
 \end{corollary}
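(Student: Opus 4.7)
The plan is to derive this from the adjunction isomorphism \eqref{iso} by specializing to the trivial $H$-comodule $M=K$, with coaction $\rho_K(1)=1\otimes 1_H$.

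First, I would observe that for any right $H$-comodule $X$ there is a canonical isomorphism
\begin{equation*}
Hom_{Comod\text{-}H}(K,X)\xrightarrow{\ \cong\ }X^{coH},\qquad f\mapsto f(1),
\end{equation*}
since an $H$-colinear map $K\to X$ is determined by $f(1)$ and the colinearity condition $(f\otimes id_H)\circ\rho_K=\rho_X\circ f$ evaluated at $1\in K$ reads exactly $f(1)\otimes 1_H=\rho_X(f(1))$, i.e.\ $f(1)\in X^{coH}$. Applying this with $X=HOM_{\mathcal D\text{-}Mod}(\mathcal N,\mathcal P)$ gives the right-hand side of the claimed identity.

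Next, I would check that $\mathcal N\otimes K\cong \mathcal N$ in ${_{\mathcal D}}\mathscr M^H$ via the obvious map $n\otimes 1\mapsto n$. This is clearly a $\mathcal D$-module isomorphism by construction of $\mathcal N\otimes K$ in Lemma \ref{tenprod}, and it is $H$-colinear at every $X\in Ob(\mathcal D)$ because the coaction on $\mathcal N(X)\otimes K$ sends $n\otimes 1$ to $\sum n_0\otimes 1\otimes n_1\cdot 1_H$, matching $\rho_{\mathcal N(X)}(n)$. Hence pre-composition with this isomorphism gives a natural bijection
\begin{equation*}
Hom_{{_{\mathcal D}}\mathscr M^H}(\mathcal N,\mathcal P)\xrightarrow{\ \cong\ }Hom_{{_{\mathcal D}}\mathscr M^H}(\mathcal N\otimes K,\mathcal P).
\end{equation*}

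Finally, combining these two identifications with the adjunction \eqref{iso} applied to $M=K$ yields the chain of natural isomorphisms
\begin{equation*}
Hom_{{_{\mathcal D}}\mathscr M^H}(\mathcal N,\mathcal P)\cong Hom_{{_{\mathcal D}}\mathscr M^H}(\mathcal N\otimes K,\mathcal P)\cong Hom_{Comod\text{-}H}\bigl(K,HOM_{\mathcal D\text{-}Mod}(\mathcal N,\mathcal P)\bigr)\cong HOM_{\mathcal D\text{-}Mod}(\mathcal N,\mathcal P)^{coH},
\end{equation*}
which is the desired identity. The only thing to watch, and the mild subtlety of the argument, is that the composite is not just an abstract bijection but recovers the stated equality of subsets: this is automatic once one unwinds the explicit form of the counit of the adjunction $\mathcal N\otimes(-)\dashv \mathscr R_{\mathcal N}$ at $\mathcal P$, which sends an element of $HOM_{\mathcal D\text{-}Mod}(\mathcal N,\mathcal P)^{coH}$ to the corresponding morphism $\mathcal N\cong \mathcal N\otimes K\to\mathcal P$ in ${_{\mathcal D}}\mathscr M^H$. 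I do not expect any genuine obstacle here; the argument is a direct application of the defining universal property together with the standard $K$-trick for extracting coinvariants.
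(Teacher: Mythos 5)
Your proposal is correct and is essentially the paper's own argument: the paper likewise proves this by setting $M=K$ in the adjunction \eqref{iso} and using $Hom_{Comod\text{-}H}(K,N)=N^{coH}$, with your verification of $\mathcal N\otimes K\cong\mathcal N$ simply making explicit a step the paper leaves implicit.
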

\begin{proof}
The result follows by choosing $M=k$ in \eqref{iso} and the fact that $Hom_{Comod\text{-}H}(k,N)=N^{coH}$ for any $N \in Comod$-$H$.
\end{proof}

\begin{corollary}\label{4.8}
If $\mathcal{I}$ is an injective in ${_{\mathcal{D}}}{\mathscr{M}}^H$, then $HOM_{\mathcal{D}\text{-}Mod}(\mathcal{N},\mathcal{I})$ is an injective in $Comod$-$H$ for any $\mathcal{N}$ in ${_{\mathcal{D}}}{\mathscr{M}}^H$.
\end{corollary}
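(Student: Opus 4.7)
The plan is to mimic the argument used earlier for Corollary \ref{2.8}, applying the adjunction in \eqref{iso} together with the standard fact (cited in the paper as \cite[Tag 015Y]{SP}) that a right adjoint preserves injectives provided its left adjoint preserves monomorphisms. Since \eqref{iso} exhibits $HOM_{\mathcal D\text{-}Mod}(\mathcal N,-):{_{\mathcal{D}}}{\mathscr{M}}^H\longrightarrow Comod\text{-}H$ as right adjoint to $\mathcal N\otimes(-):Comod\text{-}H\longrightarrow {_{\mathcal{D}}}{\mathscr{M}}^H$, the whole matter reduces to verifying that the functor $\mathcal N\otimes(-)$ preserves monomorphisms.

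For this, suppose $\iota:M_1\hookrightarrow M_2$ is a monomorphism in $Comod\text{-}H$. I would first appeal to the characterization of monomorphisms in ${_{\mathcal{D}}}{\mathscr{M}}^H$ recorded around \eqref{epimonoy}: a morphism in ${_{\mathcal{D}}}{\mathscr{M}}^H$ is a monomorphism if and only if, evaluated at each $X\in Ob(\mathcal D)$, it is a monomorphism of $H$-comodules. By the formula in Lemma \ref{tenprod}, the map $\mathcal N\otimes \iota:\mathcal N\otimes M_1\longrightarrow \mathcal N\otimes M_2$ is given at $X$ by $\mathrm{id}_{\mathcal N(X)}\otimes \iota:\mathcal N(X)\otimes M_1\longrightarrow \mathcal N(X)\otimes M_2$. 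Since $K$ is a field, the $K$-vector space $\mathcal N(X)$ is flat, so this map is injective as a $K$-linear map; because monomorphisms in $Comod\text{-}H$ are detected on underlying $K$-vector spaces, it is a monomorphism in $Comod\text{-}H$ as well.

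Putting these steps together, $\mathcal N\otimes(-)$ carries monomorphisms in $Comod\text{-}H$ to monomorphisms in ${_{\mathcal{D}}}{\mathscr{M}}^H$. Invoking \cite[Tag 015Y]{SP} then yields that its right adjoint $HOM_{\mathcal D\text{-}Mod}(\mathcal N,-)$ sends injective objects in ${_{\mathcal{D}}}{\mathscr{M}}^H$ to injective objects in $Comod\text{-}H$, which is the desired conclusion. I do not anticipate any genuine obstacle; the only minor point to be careful about is citing the correct characterization of monomorphisms in ${_{\mathcal{D}}}{\mathscr{M}}^H$ so that the componentwise injectivity argument is legitimate.
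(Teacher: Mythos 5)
Your proof is correct and takes essentially the same route as the paper: the paper's own proof is a one-liner asserting that the left adjoint $\mathcal N\otimes(-)$ is exact, and you merely supply the componentwise verification (via the characterization of monomorphisms in ${_{\mathcal{D}}}{\mathscr{M}}^H$ from \eqref{epimonoy} and flatness of $K$-vector spaces) that it preserves monomorphisms before invoking \cite[Tag 015Y]{SP}.
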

\begin{proof}
The fact that $HOM_{\mathcal{D}\text{-}Mod}(\mathcal{N},-):{_{\mathcal{D}}}{\mathscr{M}}^H\longrightarrow Comod\text{-}H$ 
preserves injectives follows from the fact that its left adjoint $\mathcal N\otimes (-):Comod\text{-}H
\longrightarrow {_{\mathcal{D}}}{\mathscr{M}}^H$ is an exact functor.
\end{proof}

At the level of higher derived functors, the result of Corollary \ref{H-coH} leads to the following spectral sequence. 

\begin{theorem}\label{Tf5.9}
Let $\mathcal{M}\in {_{\mathcal{D}}}{\mathscr{M}}^H$ be a relative $(\mathcal D,H)$-Hopf module. We consider the functors 
\begin{equation*}
\begin{array}{c}\mathscr F=HOM_{\mathcal{D} \text{-}Mod}(\mathcal{M},-): {_{\mathcal{D}}}{\mathscr{M}}^H\longrightarrow Comod\text{-}H\qquad \mathcal{N}\mapsto HOM_{\mathcal{D} \text{-}Mod}(\mathcal{M},\mathcal{N})\\
\mathscr G=({-})^{coH}: Comod{\text{-}}H \longrightarrow Vect_K\qquad M\mapsto M^{coH}\\
\end{array}
\end{equation*} Then, we have the following spectral sequence $$R^{p}(-)^{coH}(R^qHOM_{\mathcal{D}\text{-}Mod}(\mathcal{M},-)(\mathcal{N}))\Rightarrow \left(R^{p+q}Hom_{{_{\mathcal{D}}}{\mathscr{M}}^H}(\mathcal{M},-)\right)(\mathcal{N})$$
\end{theorem}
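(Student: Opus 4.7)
The plan is to recognize that this spectral sequence is a direct application of Grothendieck's spectral sequence for composite functors, once the two hypotheses (composition formula and preservation of injectives by the inner functor) are verified from results already in the paper.

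First I would note that ${_{\mathcal{D}}}{\mathscr{M}}^H$, $Comod\text{-}H$, and $Vect_K$ are all Grothendieck categories, so in particular each has enough injectives, which ensures that the derived functors $R^p\mathscr{G}$ and $R^q\mathscr{F}$ in the statement make sense and that the Grothendieck spectral sequence machinery is available. Next, I would verify the composition identity: by Corollary \ref{H-coH}, for every $\mathcal{N}\in {_{\mathcal{D}}}{\mathscr{M}}^H$ we have
\begin{equation*}
(\mathscr{G}\circ\mathscr{F})(\mathcal{N}) = HOM_{\mathcal{D}\text{-}Mod}(\mathcal{M},\mathcal{N})^{coH} = Hom_{{_{\mathcal{D}}}{\mathscr{M}}^H}(\mathcal{M},\mathcal{N}),
\end{equation*}
which identifies the target of the spectral sequence as the right derived functors of $Hom_{{_{\mathcal{D}}}{\mathscr{M}}^H}(\mathcal{M},-)$.

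Then I would verify that $\mathscr{F}=HOM_{\mathcal{D}\text{-}Mod}(\mathcal{M},-)$ carries injectives in ${_{\mathcal{D}}}{\mathscr{M}}^H$ to injectives in $Comod\text{-}H$. This is precisely Corollary \ref{4.8}, which in turn follows from the exactness of the left adjoint $\mathcal{M}\otimes(-):Comod\text{-}H\longrightarrow {_{\mathcal{D}}}{\mathscr{M}}^H$ used to define $HOM_{\mathcal{D}\text{-}Mod}(\mathcal{M},-)$ via \eqref{iso}. Together with the exactness of $\mathscr{G}$ on left-exact pieces (it is just left exact as a right adjoint to the forgetful/cotrivial functor, which is all Grothendieck's setup requires), the hypotheses for the classical Grothendieck spectral sequence \cite{Grothen} are met.

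I do not anticipate any genuine obstacle in this proof: everything reduces to citing the two corollaries and invoking the composite-functor spectral sequence. The only mildly delicate point is the bookkeeping around the derived functors $R^qHOM_{\mathcal{D}\text{-}Mod}(\mathcal{M},-)$, namely that they are computed in the Grothendieck category ${_{\mathcal{D}}}{\mathscr{M}}^H$ (using an injective resolution $\mathcal{E}^*$ of $\mathcal{N}$ in ${_{\mathcal{D}}}{\mathscr{M}}^H$ and applying $HOM_{\mathcal{D}\text{-}Mod}(\mathcal{M},-)$), after which the standard spectral sequence assembles into
\begin{equation*}
R^{p}(-)^{coH}\bigl(R^qHOM_{\mathcal{D}\text{-}Mod}(\mathcal{M},-)(\mathcal{N})\bigr)\Rightarrow R^{p+q}Hom_{{_{\mathcal{D}}}{\mathscr{M}}^H}(\mathcal{M},-)(\mathcal{N}).
\end{equation*}
The proof would therefore be essentially a one-paragraph verification of the Grothendieck hypotheses followed by a reference.
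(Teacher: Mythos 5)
Your proposal is correct and follows essentially the same route as the paper: identify $(\mathscr{G}\circ\mathscr{F})(\mathcal{N})=HOM_{\mathcal{D}\text{-}Mod}(\mathcal{M},\mathcal{N})^{coH}=Hom_{{_{\mathcal{D}}}{\mathscr{M}}^H}(\mathcal{M},\mathcal{N})$ via Corollary \ref{H-coH}, invoke Corollary \ref{4.8} for preservation of injectives, and apply the Grothendieck composite-functor spectral sequence. The paper's own proof is exactly this short verification, so no further comparison is needed.
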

\begin{proof}
We have $(\mathscr{G}\circ\mathscr{F})(\mathcal{N})= HOM_{\mathcal{D} \text{-}Mod}(\mathcal{M},\mathcal{N})^{coH} = Hom_{{_{\mathcal{D}}}{\mathscr{M}}^H}(\mathcal{M},\mathcal{N})$ by Corollary \ref{H-coH}. By Corollary \ref{4.8}, 
the functor $\mathscr F$ preserves injectives.  Since $Comod$-$H$ has enough injectives, the result now follows from Grothendieck spectral sequence for
composite functors (see \cite{Grothen}). 
\end{proof}

Let $M,N$ be right $H$-comodules. Let $H^*$ be the linear dual of $H$. Then, the space $Hom_K(M,N)$ carries a left $H^*$-module structure given by
\begin{equation*}
(h^*f)(m):=\sum h^*\left(S^{-1}(m_1){(f(m_0))}_1\right)({f(m_0))}_0
\end{equation*}
for any $h^* \in H^*$, $f \in Hom_K(M,N)$ and $m \in M$. We now show that this $H^*$-action can be extended to relative $(\mathcal{D}\text{-}H)$-Hopf modules.

\begin{lemma}
Let $\mathcal{M},~\mathcal{N}\in {_{\mathcal{D}}}{\mathscr{M}}^H$. Then, $Hom_{\mathcal{D}\text{-}Mod}(\mathcal{M},\mathcal{N})$ is a left $H^*$-module.
\end{lemma}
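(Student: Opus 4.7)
The plan is to define, for $h^*\in H^*$, $\eta\in Hom_{\mathcal{D}\text{-}Mod}(\mathcal M,\mathcal N)$, $X\in Ob(\mathcal D)$ and $m\in\mathcal M(X)$, the assignment
\begin{equation*}
(h^*\eta)(X)(m):=\sum h^*\bigl(S^{-1}(m_1)(\eta(X)(m_0))_1\bigr)(\eta(X)(m_0))_0,
\end{equation*}
which is precisely the pointwise application of the $H^*$-action on $Hom_K(\mathcal M(X),\mathcal N(X))$ recalled before the lemma. Since that construction is known to endow each $Hom_K(\mathcal M(X),\mathcal N(X))$ with the structure of a left $H^*$-module, both module axioms $((h^*g^*)\eta)(X)=(h^*(g^*\eta))(X)$ and $(\varepsilon_{H^*}\eta)(X)=\eta(X)$ will hold automatically, once we know that $h^*\eta$ is a bona fide $\mathcal{D}$-module morphism.

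The only nontrivial thing to check is therefore the naturality of $h^*\eta$, i.e.\ that for every $f\in Hom_{\mathcal D}(X,Y)$ and every $m\in\mathcal M(X)$ we have
\begin{equation*}
(h^*\eta)(Y)(\mathcal M(f)(m))=\mathcal N(f)((h^*\eta)(X)(m)).
\end{equation*}
I would expand the left-hand side by first applying the relative Hopf module condition \eqref{relative} to $\mathcal M$, rewriting $\rho_{\mathcal M(Y)}(\mathcal M(f)(m))=\sum\mathcal M(f_0)(m_0)\otimes f_1m_1$; then using the $\mathcal D$-linearity of $\eta$ to replace $\eta(Y)(\mathcal M(f_0)(m_0))$ by $\mathcal N(f_0)(\eta(X)(m_0))$; and finally applying \eqref{relative} to $\mathcal N$ to open up $\rho_{\mathcal N(Y)}(\mathcal N(f_0)(\eta(X)(m_0)))$. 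The right-hand side is expanded directly from the definition of $(h^*\eta)(X)$, after which $\mathcal N(f)$ is pushed inside using the relative Hopf module condition for $\mathcal N$.

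The key obstacle — and essentially the only combinatorial content — will be matching the two expressions. Both sides end up as sums of terms of the form $h^*(S^{-1}(?)\,?)\cdot(?)_0$ involving the iterated coactions $\rho^{(2)}(f)$ on morphisms of $\mathcal D$, $\rho^{(2)}(m)$ on $\mathcal M(X)$, and $\rho(\eta(X)(m_0))$ on $\mathcal N(X)$. Matching them requires coassociativity of the coaction on $\mathcal M(X)$ and on $Hom_{\mathcal D}(X,Y)$, together with the antipode identity $\sum S^{-1}(h_2)h_3=\varepsilon(h_2)\cdot 1$ to collapse the $f_1$-factors against their inverses introduced by $S^{-1}$. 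Once this bookkeeping is carried out, the two sides agree and $h^*\eta\in Hom_{\mathcal D\text{-}Mod}(\mathcal M,\mathcal N)$, which completes the proof.
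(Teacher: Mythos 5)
Your proposal is correct and follows essentially the same route as the paper: the same formula for the action, with the naturality of $h^*\eta$ verified by the same sequence of steps (the relative Hopf module condition \eqref{relative} applied to $\mathcal{M}$, the $\mathcal{D}$-linearity of $\eta$, the condition \eqref{relative} applied to $\mathcal{N}$, and the collapse $\sum S^{-1}(f_2)f_1=\varepsilon(f)1_H$). The only difference is that you inherit the module axioms componentwise from the recalled $H^*$-action on each $Hom_K(\mathcal{M}(X),\mathcal{N}(X))$, whereas the paper verifies $(h^*g^*)\eta=h^*(g^*\eta)$ and $\varepsilon\eta=\eta$ by direct computation; your shortcut is legitimate because the action is defined pointwise in $X$, so the axioms hold on the subspace $Hom_{\mathcal{D}\text{-}Mod}(\mathcal{M},\mathcal{N})$ as soon as it is stable under the action, which is exactly the naturality check you single out.
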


\begin{proof}
For $h^* \in H^*$ and $\eta \in Hom_{\mathcal{D}\text{-}Mod}(\mathcal{M},\mathcal{N})$, we set
\begin{equation}\label{H^*-action}
(h^*\eta)(X)(m):=\sum h^*\Big(S^{-1}(m_1){(\eta(X)(m_0))}_1\Big){(\eta(X)(m_0))}_0.
\end{equation}
for all $X \in Ob(\mathcal{D})$ and $m \in \mathcal{M}(X)$. We first verify that $h^*\eta$ is indeed an element in $Hom_{\mathcal{D}\text{-}Mod}(\mathcal{M},\mathcal{N})$. For any $f \in Hom_\mathcal{D}(X,Y)$, we have
\begin{equation*}
\begin{array}{ll}
(h^*\eta)(Y)\mathcal{M}(f)(m)&= \sum h^* \Big(S^{-1}\big({(\mathcal{M}(f)(m))}_1\big) {\big(\eta(Y){\big((\mathcal{M}(f)(m))}_0 \big)\big)}_1 \Big) {\big(\eta(Y){\big((\mathcal{M}(f)(m))}_0 \big)\big)}_0\\
& = \sum h^* \Big(S^{-1}(f_1m_1) {\big(\eta(Y)\big((\mathcal{M}(f_0)(m_0) \big)\big)}_1  \Big) {\big(\eta(Y)\big((\mathcal{M}(f_0)(m_0) \big)\big)}_0 ~~~~~~~(\text{using}~ \eqref{relative})\\
& = \sum h^* \Big(S^{-1}(m_1)S^{-1}(f_1) {\big(\mathcal{N}(f_0)\big((\eta(X)(m_0) \big)\big)}_1  \Big) {\big(\mathcal{N}(f_0)\big((\eta(X)(m_0) \big)\big)}_0\\
& = \sum h^* \Big(S^{-1}(m_1)S^{-1}(f_1)(f_0)_1(\eta(X)(m_0))_1\Big)\mathcal{N}((f_0)_0)(\eta(X)(m_0))_0~~~~~(\text{using}~ \eqref{relative}) \\
& = \sum h^* \Big(S^{-1}(m_1)S^{-1}(f_2)f_1(\eta(X)(m_0))_1\Big)\mathcal{N}(f_0)(\eta(X)(m_0))_0\\
& = \mathcal{N}(f)(h^*\eta)(X)(m)
\end{array}
\end{equation*}
Next, we verify that $(h^*g^*)\eta=h^*(g^*\eta)$ and that $1_{H^*}\eta=\eta$, i.e., $\varepsilon \eta=\eta$ for all $h^*,g^* \in H^*$ and $\eta \in Hom_{\mathcal{D}\text{-}Mod}(\mathcal{M},\mathcal{N})$. The latter equality follows easily and further we see that 
\begin{equation*}
\begin{array}{ll}
(h^*(g^*\eta))(X)(m)&= \sum h^*\Big(S^{-1}(m_1)\big((g^*\eta)(X)(m_0)\big)_1 \Big)\big((g^*\eta)(X)(m_0)\big)_0\\
&= \sum h^*\Big(S^{-1}(m_1)\Big(g^*\big(S^{-1}({(m_0)}_1)\big(\eta(X)({(m_0)}_0)\big)\big)_1  \big(\eta(X)((m_0)_0)\big)\big)_0 \Big)_1 \Big)\\
&~~~~~~~ \Big(g^*\big(S^{-1}({(m_0)}_1)\big(\eta(X)({(m_0)}_0)\big)\big)_1  \big(\eta(X)({(m_0)}_0)\big)\big)_0 \Big)_0\\
&= \sum h^*\Big(S^{-1}(m_2){(\eta(X)(m_0))}_1\Big)g^*\Big(S^{-1}(m_1)(\eta(X)(m_0))_2\Big)(\eta(X)(m_0))_0\\
&= \sum (h^*g^*)(S^{-1}(m_1){(\eta(X)(m_0))}_1){(\eta(X)(m_0))}_0\\
&= \big((h^*g^*)\eta\big)(X)(m)
\end{array}
\end{equation*} 
for all $X \in Ob(\mathcal{D})$ and $m \in \mathcal{M}(X)$.
\end{proof}

\begin{lemma}
Let $\mathcal{M},~\mathcal{N}\in {_{\mathcal{D}}}{\mathscr{M}}^H$ and let $\eta \in Hom_{\mathcal{D}\text{-}Mod}(\mathcal{M},\mathcal{N})$. Then, there is a morphism $\rho(\eta) \in Hom_{\mathcal{D}\text{-}Mod}(\mathcal{M},\mathcal{N}\otimes H)$ determined by setting
\begin{equation}\label{coaction on eta}
\rho(\eta)(X)(m):= \sum\big(\eta(X)(m_0)\big)_0 \otimes S^{-1}(m_1)\big(\eta(X)(m_0)\big)_1
\end{equation}
for any $X \in Ob(\mathcal{D})$ and $m \in \mathcal{M}(X)$. 
\end{lemma}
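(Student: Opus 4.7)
The plan is to verify that $\rho(\eta)$ is a morphism of left $\mathcal{D}$-modules from $\mathcal{M}$ to $\mathcal{N}\otimes H$, where the target is the relative Hopf module obtained by choosing $M=H$ (a right $H$-comodule via $\Delta$) in Lemma \ref{tenprod}. $K$-linearity of each $\rho(\eta)(X)$ is immediate from the defining formula \eqref{coaction on eta}, since every ingredient---the coactions on $\mathcal{M}(X)$ and $\mathcal{N}(X)$, the map $\eta(X)$, the multiplication in $H$, and $S^{-1}$---is $K$-linear. The substantive point is the naturality identity
$$\rho(\eta)(Y)\bigl(\mathcal{M}(f)(m)\bigr)\;=\;(\mathcal{N}\otimes H)(f)\bigl(\rho(\eta)(X)(m)\bigr)\qquad \forall\,f\in Hom_\mathcal{D}(X,Y),\ m\in \mathcal{M}(X),$$
where $(\mathcal{N}\otimes H)(f)(n\otimes h)=\mathcal{N}(f)(n)\otimes h$ by Lemma \ref{tenprod}.

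I would compute the left-hand side by applying three successive identities. First, \eqref{relative} for $\mathcal{M}$ replaces $\rho_{\mathcal{M}(Y)}(\mathcal{M}(f)(m))$ in \eqref{coaction on eta} by $\sum \mathcal{M}(f_0)(m_0)\otimes f_1m_1$; in particular, the factor $S^{-1}((\mathcal{M}(f)(m))_1)$ becomes $S^{-1}(f_1m_1)=S^{-1}(m_1)S^{-1}(f_1)$ by antimultiplicativity of $S^{-1}$. Second, the $\mathcal{D}$-linearity of $\eta$ turns $\eta(Y)(\mathcal{M}(f_0)(m_0))$ into $\mathcal{N}(f_0)(\eta(X)(m_0))$. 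Third, \eqref{relative} for $\mathcal{N}$ applied to this element expands its coaction as $\sum \mathcal{N}((f_0)_0)((\eta(X)(m_0))_0)\otimes (f_0)_1(\eta(X)(m_0))_1$. Putting everything together yields
$$\rho(\eta)(Y)(\mathcal{M}(f)(m))=\sum \mathcal{N}((f_0)_0)\bigl((\eta(X)(m_0))_0\bigr)\otimes S^{-1}(m_1)\,S^{-1}(f_1)(f_0)_1(\eta(X)(m_0))_1.$$

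To finish, I would use coassociativity of the $H$-coaction on $Hom_\mathcal{D}(X,Y)$ to rewrite the iterated symbols as $\sum (f_0)_0\otimes (f_0)_1\otimes f_1=\sum f_0\otimes (f_1)_1\otimes (f_1)_2$. The antipode identity $\sum S^{-1}((f_1)_2)(f_1)_1=\varepsilon(f_1)\,1_H$ then collapses the $H$-tail, and the counit axiom $\sum \varepsilon(f_1)f_0=f$ contracts the $\mathcal{N}$-part to $\mathcal{N}(f)$. What remains is
$$\sum \mathcal{N}(f)\bigl((\eta(X)(m_0))_0\bigr)\otimes S^{-1}(m_1)(\eta(X)(m_0))_1\;=\;(\mathcal{N}\otimes H)(f)\bigl(\rho(\eta)(X)(m)\bigr),$$
as required. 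The only real obstacle is Sweedler-index bookkeeping---in particular, applying coassociativity in the correct direction and remembering that the relevant antipode identity for $S^{-1}$ reads $\sum S^{-1}(h_2)h_1=\varepsilon(h)1_H$ rather than the more familiar version for $S$.
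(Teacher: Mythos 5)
Your proposal is correct and follows essentially the same route as the paper: expand $\rho_{\mathcal{M}(Y)}(\mathcal{M}(f)(m))$ via \eqref{relative}, use $\mathcal{D}$-linearity of $\eta$, apply \eqref{relative} for $\mathcal{N}$, and then collapse the $H$-tail with coassociativity, the identity $\sum S^{-1}(h_2)h_1=\varepsilon(h)1_H$, and the counit axiom. The Sweedler bookkeeping you describe matches the paper's displayed computation exactly.
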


\begin{proof}
Using \eqref{relative} and the fact that $\eta \in Hom_{\mathcal{D}\text{-}Mod}(\mathcal{M},\mathcal{N})$, we have
\begin{equation*}
\begin{array}{ll}
\rho(\eta)(Y)\big(\mathcal{M}(f)(m)\big) &=\sum\Big(\eta(Y)\big(\mathcal{M}(f_0)(m_0)\big)\Big)_0\otimes S^{-1}(f_1m_1)\Big(\eta(Y)\big(\mathcal{M}(f_0)(m_0)\big)\Big)_1\\
&=\sum\Big(\mathcal{N}(f_0)\big(\eta(X)(m_0)\big)\Big)_0\otimes S^{-1}(f_1m_1)\Big(\mathcal{N}(f_0)\big(\eta(X)(m_0)\big)\Big)_1\\
&=\sum\mathcal{N}(f_0)\big(\eta(X)(m_0)\big)_0\otimes S^{-1}(m_1)S^{-1}(f_2)f_1\big(\eta(X)(m_0)\big)_1\\
&=\sum\mathcal{N}(f)\big(\eta(X)(m_0)\big)_0\otimes S^{-1}(m_1)\big(\eta(X)(m_0)\big)_1\\
&=(\mathcal{N}(f) \otimes id_H)\rho(\eta)(X)
\end{array}
\end{equation*}
for any $f\in Hom_{\mathcal{D}}(X,Y)$ and $m \in \mathcal{M}(X)$.
\end{proof}

We now recall the notion of a rational left $H^*$-module (see, for instance, \cite{SCS}) which will be used in the next result.
Given a left $H^*$-module $M$, there is a morphism $\rho_M:M\longrightarrow Hom_K(H^*,M)$ corresponding to 
the canonical morphism $H^*\otimes M\longrightarrow M$. There is an obvious inclusion $M\otimes H\hookrightarrow Hom_K(H^*,M)$
given by $(m\otimes h)(h^*)=h^*(h)m$ for any $m\in M$, $h\in H$ and $h^*\in H^*$. 

\begin{definition}\label{Def5.12} (see \cite[Definition 2.2.2]{SCS}) A left $H^*$-module $M$ is said to be rational
if $\rho_M(M)\subseteq M\otimes H$, where $M\otimes H$ is viewed as a subspace of $Hom_K(H^*,M)$. The full subcategory
of rational $H^*$-modules will be denoted by $Rat(H^*\text{-}Mod)$ . 
\end{definition}

 If $M$ is a right $H$-comodule with $H$-coaction $m \mapsto \sum m_0 \otimes m_1$, then $M$ becomes a left $H^*$-module via the action $h^*m:=\sum h^*(m_1)m_0$ for $h^* \in H^*$ and $m \in M$. This determines a functor 
 \begin{equation*}
 Comod\text{-}H\longrightarrow H^*\text{-}Mod
 \end{equation*} It is well known (see \cite[Theorem 2.2.5]{SCS}) that  this functor defines an equivalence of categories between $Comod\text{-}H$ and the subcategory  $Rat(H^*\text{-}Mod)$ of $H^*\text{-}Mod$. 

\begin{prop}\label{4.12}
Let $\mathcal{M},~\mathcal{N}\in {_{\mathcal{D}}}{\mathscr{M}}^H$ and suppose that $\mathcal{M}$ is finitely generated as an object in $\mathcal{D}\text{-}Mod$. Then, $Hom_{\mathcal{D}\text{-}Mod}(\mathcal{M},\mathcal{N})$ is a right $H$-comodule. In particular, $HOM_{\mathcal{D}\text{-}Mod}(\mathcal{M},\mathcal{N})= Hom_{\mathcal{D}\text{-}Mod}(\mathcal{M},\mathcal{N})$.
\end{prop}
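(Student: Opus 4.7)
The plan is to establish that the left $H^*$-module $Hom_{\mathcal{D}\text{-}Mod}(\mathcal{M},\mathcal{N})$ defined in the preceding lemma is rational in the sense of Definition \ref{Def5.12}; it will then acquire a right $H$-comodule structure via the equivalence $Comod\text{-}H\simeq Rat(H^*\text{-}Mod)$ recalled before Definition \ref{Def5.12}. The identification $HOM_{\mathcal{D}\text{-}Mod}(\mathcal{M},\mathcal{N})=Hom_{\mathcal{D}\text{-}Mod}(\mathcal{M},\mathcal{N})$ will then follow from uniqueness of right adjoints applied to the defining isomorphism \eqref{iso}.

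For rationality, I will exploit the finite generation hypothesis. Choose generators $m_1,\dots,m_n$ with $m_i\in\mathcal{M}(X_i)$, and by \cite[Theorem 2.1.7]{SCS} pick finite-dimensional $H$-subcomodules $W_i\subseteq\mathcal{M}(X_i)$ with $m_i\in W_i$. Since the $W_i$'s contain a generating family for $\mathcal{M}$, the $K$-linear restriction map
\[
\phi:Hom_{\mathcal{D}\text{-}Mod}(\mathcal{M},\mathcal{N})\longrightarrow \bigoplus_{i=1}^n Hom_K(W_i,\mathcal{N}(X_i)),\qquad \eta\mapsto \bigl(\eta(X_i)|_{W_i}\bigr)_{i=1}^n,
\]
is injective. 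Each factor $Hom_K(W_i,\mathcal{N}(X_i))$ will be given a right $H$-comodule structure: since $W_i$ is finite dimensional, the canonical embedding $Hom_K(W_i,\mathcal{N}(X_i))\otimes H\hookrightarrow Hom_K(W_i,\mathcal{N}(X_i)\otimes H)$ is an isomorphism, so the assignment $g\mapsto \bigl(w\mapsto \sum g(w_0)_0\otimes S^{-1}(w_1)g(w_0)_1\bigr)$ transports to an honest coaction (coassociativity and the counit law follow from the comodule axioms on $W_i$ and $\mathcal{N}(X_i)$ together with the identity $\sum S^{-1}(x_2)x_1=\varepsilon(x)1_H$ in $H$). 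A Sweedler-notation calculation shows that the induced left $H^*$-action on $Hom_K(W_i,\mathcal{N}(X_i))$ agrees exactly with the restriction of \eqref{H^*-action}, so $\phi$ is $H^*$-equivariant. Since the target is rational (a finite direct sum of $H$-comodules), its submodule $Hom_{\mathcal{D}\text{-}Mod}(\mathcal{M},\mathcal{N})$ is rational, which endows the latter with a right $H$-comodule structure whose coaction is precisely the one coming from \eqref{coaction on eta}.

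With this coaction in place, it remains to show that $Hom_{\mathcal{D}\text{-}Mod}(\mathcal{M},-):{_{\mathcal{D}}}{\mathscr{M}}^H\to Comod\text{-}H$ is right adjoint to $\mathcal{M}\otimes(-):Comod\text{-}H\to{_{\mathcal{D}}}{\mathscr{M}}^H$. The candidate adjunction bijection sends $\alpha\in Hom_{{_{\mathcal{D}}}{\mathscr{M}}^H}(\mathcal{M}\otimes M,\mathcal{N})$ to $\beta\in Hom_{Comod\text{-}H}\bigl(M,Hom_{\mathcal{D}\text{-}Mod}(\mathcal{M},\mathcal{N})\bigr)$ defined by $\beta(m)(X)(x):=\alpha(X)(x\otimes m)$; $H$-colinearity of $\beta$ is confirmed via \eqref{coaction on eta} using the $H$-colinearity of $\alpha$ together with $\sum x_0\otimes S^{-1}(x_2)x_1=x\otimes 1_H$, a consequence of the antipode and counit axioms. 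Uniqueness of right adjoints applied to \eqref{iso} then yields $HOM_{\mathcal{D}\text{-}Mod}(\mathcal{M},\mathcal{N})=Hom_{\mathcal{D}\text{-}Mod}(\mathcal{M},\mathcal{N})$. The main technical obstacle will be the $H^*$-equivariance check for $\phi$, which requires carefully matching \eqref{H^*-action} against the canonical $H^*$-action on $Hom_K(W_i,\mathcal{N}(X_i))\cong W_i^*\otimes\mathcal{N}(X_i)$ coming from the tensor product comodule structure; this is a careful Sweedler-notation computation involving the inverse antipode $S^{-1}$.
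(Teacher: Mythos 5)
Your proposal is correct and follows essentially the same route as the paper: both use finite generation to embed $Hom_{\mathcal{D}\text{-}Mod}(\mathcal{M},\mathcal{N})$ $H^*$-equivariantly into a finite direct sum of comodules of the form $Hom_K(W,\mathcal{N}(X_i))$ with $W$ a finite-dimensional comodule, conclude rationality from the fact that $H^*$-submodules of rational modules are rational, and then obtain $HOM=Hom$ from uniqueness of the right adjoint in \eqref{iso}. The only (cosmetic) difference is that you restrict along the inclusions $W_i\hookrightarrow\mathcal{M}(X_i)$, whereas the paper precomposes with the epimorphism $\left(\bigoplus {_{X_i}}{\bf h}\right)\otimes W\rightarrow\mathcal{M}$ of Proposition \ref{fg(D,H)} and then identifies $Hom_{\mathcal{D}\text{-}Mod}({_{X_i}}{\bf h}\otimes W,\mathcal{N})\cong Hom_K(W,\mathcal{N}(X_i))$ by Yoneda; these amount to the same embedding.
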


\begin{proof}
Since $\mathcal{M}$ is finitely generated in $\mathcal{D}\text{-}Mod$, by Proposition \ref{fg(D,H)}, there exists
a finite dimensional $H$-comodule $W$ and an epimorphism 
\begin{equation*}
\eta:\left(\bigoplus_{i\in I} {_{X_i}}{\bf{h}} \right)\otimes W \longrightarrow \mathcal{M}
\end{equation*}
in  ${_{\mathcal{D}}}{\mathscr{M}}^H$, for finitely many objects $\{X_i\}_{i\in I}$  in $\mathcal{D}$. From the description
of epimorphisms in $ {_{\mathcal{D}}}{\mathscr{M}}^H$ in \eqref{epimonoy}, we know that $\eta$ is also an epimorphism in $\mathcal D\text{-}Mod$. 
The map
\begin{equation*}
Hom(\eta, \mathcal{N}): Hom_{\mathcal{D}\text{-}Mod}(\mathcal{M},\mathcal{N})\hookrightarrow \bigoplus_{i\in I} Hom_{\mathcal{D}\text{-}Mod}({_{X_i}}{\bf h}\otimes W,~\mathcal{N})
\end{equation*}
is therefore a monomorphism  for each $\mathcal{N} \in {_{\mathcal{D}}}{\mathscr{M}}^H$. Using the fact that $\eta(Y)$ is $H$-colinear for each $Y\in Ob(\mathcal{D})$, we will now verify that the morphism $Hom(\eta, \mathcal{N})$ is $H^*$-linear. For any $h^* \in H^*$, $\xi \in Hom_{\mathcal{D}\text{-}Mod}(\mathcal{M},\mathcal{N})$, $Y \in Ob(\mathcal{D})$ and $\tilde{f} \otimes w \in \left(\bigoplus_{i \in I} {_{X_i}}{\bf h}(Y) \right)\otimes W$, we have
\begin{equation*}
\begin{array}{ll}
&\left(Hom(\eta,\mathcal{N})(h^*\xi)\right)(Y)(\tilde{f} \otimes w)\\
& \quad =((h^*\xi)\circ \eta)(Y)(\tilde{f} \otimes w)=(h^*\xi)(Y)\left(\eta(Y)(\tilde{f} \otimes w)\right)\\
& \quad = \sum h^*\Big(S^{-1}\left({(\eta(Y)(\tilde{f} \otimes w))}_1\right){\big(\xi(Y){\left((\eta(Y)(\tilde{f} \otimes w)\right)}_0\big)}_1\Big){\left(\xi(Y){\left((\eta(Y)(\tilde{f} \otimes w)\right)}_0\right)}_0\\
& \quad = \sum h^*\Big(S^{-1}\left({(\tilde{f} \otimes w)}_1\right){\big(\xi(Y)\left(\eta(Y)(\tilde{f} \otimes w)_0\right)\big)}_1\Big){\big(\xi(Y)\left(\eta(Y)(\tilde{f} \otimes w)_0\right)\big)}_0\\
& \quad = \sum h^*\Big(S^{-1}\left({(\tilde{f} \otimes w)}_1\right){\big((\xi \circ \eta)(Y)(\tilde{f} \otimes w)_0)\big)}_1\Big){\big((\xi \circ \eta)(Y)(\tilde{f} \otimes w)_0)\big)}_0\\
& \quad = \left(h^*(\xi \circ \eta)\right)(Y)(\tilde{f} \otimes w)\\
& \quad =\left(h^*Hom(\eta,\mathcal{N})(\xi)\right)(Y)(\tilde{f} \otimes w)
\end{array}
\end{equation*}
This shows that $Hom_{\mathcal{D}\text{-}Mod}(\mathcal{M},\mathcal{N})$ is an $H^*$-submodule of $\bigoplus_{i \in I} Hom_{\mathcal{D}\text{-}Mod}({_{X_i}}{\bf h}\otimes W,~\mathcal{N})$. 

\smallskip
For each $i\in I$, we now prove that $\rho:Hom_{\mathcal{D}\text{-}Mod}({_{X_i}}{\bf h}\otimes W, \mathcal{N}) \longrightarrow Hom_{\mathcal{D}\text{-}Mod}({_{X_i}}{\bf h}\otimes W, \mathcal{N} \otimes H)$, as defined in \eqref{coaction on eta}, gives an $H$-comodule structure on $Hom_{\mathcal{D}\text{-}Mod}({_{X_i}}{\bf h}\otimes W,\mathcal{N})$.  Since $W$ is finite dimensional, we  have
\begin{equation*}
\begin{array}{ll}
Hom_{\mathcal{D}\text{-}Mod}({_{X_i}}{\bf h} \otimes W,~\mathcal{N} \otimes H)&\cong Hom_K(W, Hom_{\mathcal{D}\text{-}Mod}({_{X_i}}{\bf h},~\mathcal{N} \otimes H))\\ & \cong Hom_K(W, \mathcal{N}(X_i) \otimes H)\cong Hom_K(W, \mathcal{N}(X_i))\otimes H \\ 
&\cong Hom_{\mathcal{D}\text{-}Mod}({_{X_i}}{\bf h} \otimes W,~\mathcal{N})\otimes H\\ 
\end{array}
\end{equation*} 
This gives a well defined morphism
\begin{equation}\label{H-comod}
\rho:Hom_{\mathcal{D}\text{-}Mod}({_{X_i}}{\bf h}\otimes W, \mathcal{N}) \longrightarrow Hom_{\mathcal{D}\text{-}Mod}({_{X_i}}{\bf h}\otimes W, \mathcal{N} \otimes H)\cong Hom_{\mathcal{D}\text{-}Mod}({_{X_i}}{\bf h} \otimes W,~\mathcal{N})\otimes H
\end{equation}  We will verify that \eqref{H-comod} gives a right $H$-coaction. For this, we need to show that for any $\zeta \in Hom_{\mathcal{D}\text{-}Mod}({_{X_i}}{\bf h}\otimes W,~\mathcal{N})$, we have $(\rho \otimes \text{id})\rho(\zeta)=(\text{id} \otimes \Delta)\rho(\zeta)$ and $(\text{id} \otimes \varepsilon)\rho(\zeta)=\zeta$. The latter equality is easy to verify. By   \eqref{H-comod}, we know that $\rho(\zeta)= \sum \zeta_0 \otimes \zeta_1 \in Hom_{\mathcal{D}\text{-}Mod}({_{X_i}}{\bf h}\otimes W,~\mathcal{N})\otimes H$. Thus, for any $X \in Ob(\mathcal{D})$ and $u \in {_{X_i}}{\bf h}(X)\otimes W$, we have
\begin{align*}
\left((\rho \otimes \text{id})\rho(\zeta)\right)(X)(u)=\sum \rho(\zeta_0)(X)(u)\otimes \zeta_1 &=\sum (\zeta_0(X)(u_0))_0\otimes S^{-1}(u_1)\big(\zeta_0(X)(u_0)\big)_1\otimes \zeta_1\\
&=\sum (\zeta(X)(u_0))_0\otimes S^{-1}(u_2)(\zeta(X)(u_0))_1\otimes S^{-1}(u_1)(\zeta(X)(u_0))_2\\
&=\sum \zeta_0(X)(u)\otimes \zeta_1 \otimes\zeta_2.
\end{align*}
The third equality above follows by applying $\rho_{\mathcal N(X)} \otimes id_H$ on the equality $\sum \zeta_0(X)(u_0) \otimes \zeta_1=\rho(\zeta)(X)(u_0)$ and the last one is obtained by applying $id_H \otimes \Delta$ on $\sum\zeta_0(X)(u)\otimes \zeta_1=\sum\big(\zeta(X)(u_0)\big)_0 \otimes S^{-1}(u_1)\big(\zeta(X)(u_0)\big)_1$. Thus, we have shown that $Hom_{\mathcal{D}\text{-}Mod}({_{X_i}}{\bf h}\otimes W,~\mathcal{N})$ is a right $H$-comodule.

\smallskip
Moreover, the $H^*$-action on $Hom_{\mathcal{D}\text{-}Mod}({_{X_i}}{\bf h}\otimes W,~\mathcal{N})$ as in \eqref{H^*-action} is  given  precisely by the $H$-coaction as in \eqref{coaction on eta}. Therefore,  $Hom_{\mathcal{D}\text{-}Mod}({_{X_i}}{\bf h}\otimes W,~\mathcal{N})$  is a rational $H^*$-module. Since the category of rational $H^*$-modules contains direct sums (it is equivalent to $Comod\text{-}H$), it follows that $\bigoplus_{i \in I} Hom_{\mathcal{D}\text{-}Mod}({_{X_i}}{\bf h}\otimes W,~\mathcal{N})$ is also a rational $H^*$-module. Being an $H^*$-submodule of $\bigoplus_{i \in I} Hom_{\mathcal{D}\text{-}Mod}({_{X_i}}{\bf h}\otimes W,~\mathcal{N})$, it is now clear that $Hom_{\mathcal{D}\text{-}Mod}(\mathcal{M},\mathcal{N})$ is also a 
rational left $H^*$-module and hence a right $H$-comodule.

\smallskip
It may be verified  that the functor $Hom_{\mathcal{D}\text{-}{Mod}}(\mathcal{M},{-}):{_{\mathcal{D}}}{\mathscr{M}}^H\longrightarrow Comod$-$H$ is right adjoint to the functor $\mathcal{M}\otimes {-}:Comod$-$H\longrightarrow {_{\mathcal{D}}}{\mathscr{M}}^H$ given by 
$N\mapsto\mathcal{M}\otimes N$. Thus, by the uniqueness of adjoints, we have $Hom_{\mathcal{D}\text{-}{Mod}}(\mathcal{M},{-})= HOM_{\mathcal{D}\text{-}{Mod}}(\mathcal{M},{-})$.
\end{proof}

A morphism $\mathcal{N} \longrightarrow \mathcal{N'}$ in ${_{\mathcal{D}}}{\mathscr{M}}^H$ induces a morphism of functors $\mathcal{N}\otimes {(-)}\longrightarrow \mathcal{N'}\otimes {(-)}$ and hence a  morphism $\mathscr{R}_{\mathcal{N'}}\longrightarrow \mathscr{R}_{\mathcal{N}}$ of their respective right adjoints. Thus, for any $\mathcal{L}\in {_{\mathcal{D}}}{\mathscr{M}}^H$, we have a functor $HOM_{\mathcal{D}\text{-}Mod}({-},\mathcal{L}):( {_{\mathcal{D}}}{\mathscr{M}}^H)^{op} \longrightarrow Comod$-$H$ which takes $\mathcal{N}$ to $HOM_{\mathcal{D}\text{-}Mod}(\mathcal{N},\mathcal{L})=\mathscr{R}_{\mathcal{N}}(\mathcal{L})$.

\begin{prop}\label{exact(D,H)}
(1) For any $\mathcal{L}\in {_{\mathcal{D}}}{\mathscr{M}}^H$, the functor $HOM_{\mathcal{D}\text{-}Mod}({-},\mathcal{L}):( {_{\mathcal{D}}}{\mathscr{M}}^H)^{op} \longrightarrow Comod$-$H$ is left exact, i.e., it preserves kernels.

\smallskip
(2)  If $\mathcal{I}$ is injective in ${_{\mathcal{D}}}{\mathscr{M}}^H$, then $HOM_{\mathcal{D}\text{-}Mod}({-},\mathcal{I})$ is exact. 

\smallskip
(3) If $\mathcal I$ is injective  in ${_{\mathcal{D}}}{\mathscr{M}}^H$, then $HOM_{\mathcal{D}\text{-}Mod}({-},\mathcal{I})$ 
takes every short exact sequence in  ${_{\mathcal{D}}}{\mathscr{M}}^H$ to a split short exact sequence in $Comod\text{-}H$. 
\end{prop}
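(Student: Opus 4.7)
The plan is to deduce all three parts from the adjunction $Hom_{{_{\mathcal{D}}}{\mathscr{M}}^H}(\mathcal{N}\otimes M,\mathcal{P}) \cong Hom_{Comod\text{-}H}(M, HOM_{\mathcal{D}\text{-}Mod}(\mathcal{N},\mathcal{P}))$ from \eqref{iso}, combined with the observation that for any $M \in Comod\text{-}H$, the functor $({-})\otimes M: {_{\mathcal{D}}}{\mathscr{M}}^H \longrightarrow {_{\mathcal{D}}}{\mathscr{M}}^H$ defined in Lemma \ref{tenprod} is exact. This last fact holds because, by \eqref{epimonoy}, monomorphisms and epimorphisms in ${_{\mathcal{D}}}{\mathscr{M}}^H$ are detected pointwise, and tensoring over the field $K$ is exact.

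For (1), I start with an exact sequence $0 \longrightarrow \mathcal{M}' \longrightarrow \mathcal{M} \longrightarrow \mathcal{M}'' \longrightarrow 0$ in ${_{\mathcal{D}}}{\mathscr{M}}^H$. For each $M\in Comod\text{-}H$, applying the exact functor $({-})\otimes M$ and then the left exact functor $Hom_{{_{\mathcal{D}}}{\mathscr{M}}^H}(-,\mathcal{L})$ yields exactness of
$$0\to Hom_{{_{\mathcal{D}}}{\mathscr{M}}^H}(\mathcal{M}''\otimes M,\mathcal{L})\to Hom_{{_{\mathcal{D}}}{\mathscr{M}}^H}(\mathcal{M}\otimes M,\mathcal{L})\to Hom_{{_{\mathcal{D}}}{\mathscr{M}}^H}(\mathcal{M}'\otimes M,\mathcal{L}).$$
Translating each term by the adjunction \eqref{iso}, the corresponding sequence of $Hom_{Comod\text{-}H}(M,-)$ groups is exact for every $M\in Comod\text{-}H$. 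Since this characterization of exactness via representable functors is standard in any Grothendieck category (Yoneda), the sequence $0\to HOM(\mathcal{M}'',\mathcal{L})\to HOM(\mathcal{M},\mathcal{L})\to HOM(\mathcal{M}',\mathcal{L})$ is exact in $Comod\text{-}H$.

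For (2), I rerun the same argument with $\mathcal{L}=\mathcal{I}$ injective. Then $Hom_{{_{\mathcal{D}}}{\mathscr{M}}^H}(-,\mathcal{I})$ is exact, so after precomposition with $({-})\otimes M$ (still exact) and translation through the adjunction, I obtain a short exact sequence $0\to Hom_{Comod\text{-}H}(M,HOM(\mathcal{M}'',\mathcal{I}))\to Hom_{Comod\text{-}H}(M,HOM(\mathcal{M},\mathcal{I}))\to Hom_{Comod\text{-}H}(M,HOM(\mathcal{M}',\mathcal{I}))\to 0$ for every $M\in Comod\text{-}H$. Combined with (1), this forces $HOM(-,\mathcal{I})$ to send the original short exact sequence to a short exact sequence in $Comod\text{-}H$.

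For (3), the splitting drops out by the usual lifting trick. Taking $M:=HOM_{\mathcal{D}\text{-}Mod}(\mathcal{M}',\mathcal{I})$ in the surjection from (2) gives a surjection $Hom_{Comod\text{-}H}(M, HOM(\mathcal{M},\mathcal{I}))\twoheadrightarrow Hom_{Comod\text{-}H}(M, HOM(\mathcal{M}',\mathcal{I}))$; lifting the identity of $HOM(\mathcal{M}',\mathcal{I})$ produces a morphism in $Comod\text{-}H$ that is a section of $HOM(\mathcal{M},\mathcal{I})\to HOM(\mathcal{M}',\mathcal{I})$, which splits the short exact sequence. The main (and essentially only) technical point is verifying that $({-})\otimes M$ is exact on ${_{\mathcal{D}}}{\mathscr{M}}^H$; everything else is a clean consequence of adjointness and the Yoneda principle.
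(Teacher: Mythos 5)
Your proposal is correct and follows essentially the same route as the paper: both arguments rest on the adjunction \eqref{iso} together with the pointwise exactness of $({-})\otimes M$, using Yoneda to get part (1) and lifting the identity of $HOM_{\mathcal{D}\text{-}Mod}(\mathcal{M}',\mathcal{I})$ against the induced surjection of $Hom_{Comod\text{-}H}$ groups to obtain the section in parts (2) and (3). The only cosmetic difference is organizational: the paper establishes the surjectivity in (2) precisely by producing the section you describe in (3), so its proofs of (2) and (3) are merged, whereas you present them as two steps.
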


\begin{proof}
(1) Let $\eta: \mathcal{M}\longrightarrow \mathcal{N}$ be a morphism in  ${_{\mathcal{D}}}{\mathscr{M}}^H$ and let
$\mathcal P:=Coker(\eta)$. Then, for any $T\in Comod\text{-}H$, $Coker(\eta\otimes {id}_T:\mathcal M\otimes 
T\longrightarrow\mathcal N\otimes T)=\mathcal P\otimes T$.  From the adjunction in \eqref{iso}, we now have
\begin{equation*}
\begin{array}{l}
Hom_{Comod\text{-}H}\big(T,HOM_{\mathcal{D}\text{-}Mod}(\mathcal{P},~\mathcal{L})\big)\cong Hom_{{_{\mathcal{D}}}{\mathscr{M}}^H}(\mathcal{P}\otimes T,~\mathcal{L})\\
 \cong Ker\Big(Hom_{{_{\mathcal{D}}}{\mathscr{M}}^H}(\mathcal{N}\otimes T,~\mathcal{L})\longrightarrow Hom_{{_{\mathcal{D}}}{\mathscr{M}}^H}(\mathcal{M}\otimes T,~\mathcal{L})\Big)\\
\cong Ker\Big(Hom_{Comod\text{-}H}\big(T,HOM_{\mathcal{D}\text{-}Mod}(\mathcal{N},~\mathcal{L})\big)\longrightarrow Hom_{Comod\text{-}H}\big(T,HOM_{\mathcal{D}\text{-}Mod}(\mathcal{M},~\mathcal{L})\big)\Big)\\
\cong Hom_{Comod\text{-}H}\Big(T, Ker\big(HOM_{\mathcal{D}\text{-}Mod}(\mathcal{N},~\mathcal{L})\longrightarrow HOM_{\mathcal{D}\text{-}Mod}(\mathcal{M},~\mathcal{L})\big)\Big)\\
\end{array}
\end{equation*}
for any $T\in Comod$-$H$. From Yoneda Lemma, it follows that 
\begin{equation*}
HOM_{\mathcal{D}\text{-}Mod}(\mathcal{P},~\mathcal{L})= Ker\big(HOM_{\mathcal{D}\text{-}Mod}(\mathcal{N},~\mathcal{L})\longrightarrow HOM_{\mathcal{D}\text{-}Mod}(\mathcal{M},~\mathcal{L})\big)
\end{equation*}

\smallskip
(2)
 Let $0\longrightarrow \mathcal M
\longrightarrow\mathcal N\longrightarrow\mathcal P\longrightarrow 0$ be a short exact sequence in 
${_{\mathcal{D}}}{\mathscr{M}}^H$. From (1), we already know that 
\begin{equation}\label{lftext}
0\longrightarrow HOM_{\mathcal D\text{-}Mod}(\mathcal P,\mathcal I)\longrightarrow HOM_{\mathcal D\text{-}Mod}(\mathcal N,\mathcal I)\overset{q}{\longrightarrow} HOM_{\mathcal D\text{-}Mod}(\mathcal M,\mathcal I)
\end{equation} is exact.  We need to show that $q$ is an epimorphism. For any $T\in Comod\text{-}H$, we notice that $0\longrightarrow \mathcal M\otimes T
\longrightarrow\mathcal N\otimes T\longrightarrow\mathcal P\otimes T\longrightarrow 0$ is still a short exact sequence in 
${_{\mathcal{D}}}{\mathscr{M}}^H$. If $\mathcal{I}$ is an injective object in ${_{\mathcal{D}}}{\mathscr{M}}^H$, we see that 
\begin{equation*}
0\longrightarrow Hom_{{_{\mathcal{D}}}{\mathscr{M}}^H}(\mathcal{P}\otimes T,~\mathcal{I})\longrightarrow  Hom_{{_{\mathcal{D}}}{\mathscr{M}}^H}(\mathcal{N}\otimes T,~\mathcal{I})\longrightarrow Hom_{{_{\mathcal{D}}}{\mathscr{M}}^H}(\mathcal{M}\otimes T,~\mathcal{I})\longrightarrow 0
\end{equation*}
is an exact sequence of $K$-vector spaces. Using the adjunction in \eqref{iso}, it follows that
\begin{equation}\label{lftext1}
\begin{array}{l}
\begin{CD}
0@>>>  Hom_{Comod\text{-}H}(T,HOM_{\mathcal D\text{-}Mod}(\mathcal P,\mathcal I))
@>>> Hom_{Comod\text{-}H}(T,HOM_{\mathcal D\text{-}Mod}(\mathcal N,\mathcal I))\\
\end{CD}\\
\hspace{2.5in}
\begin{CD}@>Hom(T,q)>>  Hom_{Comod\text{-}H}(T,HOM_{\mathcal D\text{-}Mod}(\mathcal M,\mathcal I))@>>> 0
\\ \end{CD}\\
\end{array}
\end{equation} is short exact in $Vect_K$. By setting $T=HOM_{\mathcal D\text{-}Mod}(\mathcal M,\mathcal I)$ in \eqref{lftext1}, 
we see that there exists a morphism $f:HOM_{\mathcal D\text{-}Mod}(\mathcal M,\mathcal I)\longrightarrow HOM_{\mathcal D\text{-}Mod}(\mathcal N,\mathcal I)$ of $H$-comodules such that $q\circ f$ is the identity on $HOM_{\mathcal D\text{-}Mod}(\mathcal M,\mathcal I)$. This shows that $q:HOM_{\mathcal D\text{-}Mod}(\mathcal N,\mathcal I)\longrightarrow HOM_{\mathcal D\text{-}Mod}(\mathcal M,\mathcal I)$ is an epimorphism.  The result of (3) is clear from the proof of (2). 
\end{proof}

\begin{prop}\label{freeres(D,H)}
Let $\mathcal{D}$ be a left noetherian right co-$H$-category and let $\mathcal{M}\in {_{\mathcal{D}}}{\mathscr{M}}^H$ be finitely generated as an object in $\mathcal{D}$-$Mod$. If $\mathcal{I}$ is an injective object in ${_{\mathcal{D}}}{\mathscr{M}}^H$, then $\textnormal{Ext}_{\mathcal{D}\text{-}Mod}^p(\mathcal{M},\mathcal{I})= 0$ for all $p>0$.
\end{prop}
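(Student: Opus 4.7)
The strategy is to mimic the proof of Proposition \ref{freeres}, replacing the role of $H$-finiteness by the rational $HOM$ functor developed in Proposition \ref{4.12} and the exactness established in Proposition \ref{exact(D,H)}. The plan is to build a free resolution of $\mathcal{M}$ in $\mathcal{D}\text{-}Mod$ that simultaneously lives in ${_{\mathcal{D}}}{\mathscr{M}}^H$, and then to compute $\textnormal{Ext}^p_{\mathcal{D}\text{-}Mod}(\mathcal{M},\mathcal{I})$ by applying the adjoint exactness properties of $HOM_{\mathcal{D}\text{-}Mod}(-,\mathcal{I})$ to this resolution.

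First, since $\mathcal{M}\in {_{\mathcal{D}}}{\mathscr{M}}^H$ is finitely generated in $\mathcal{D}\text{-}Mod$, Proposition \ref{fg(D,H)} gives a finite dimensional $H$-comodule $W_0$, finitely many objects $\{X_i\}_{1\leq i\leq n_0}$ of $\mathcal{D}$, and an epimorphism $\eta_0\colon \mathcal{P}_0:=(\bigoplus_{i=1}^{n_0}{_{X_i}}{\bf h})\otimes W_0\longrightarrow\mathcal{M}$ in ${_{\mathcal{D}}}{\mathscr{M}}^H$. After choosing a $K$-basis of $W_0$, we see that $\mathcal{P}_0$ is a finite direct sum of representable modules $_X{\bf h}$, hence a finitely generated free object of $\mathcal{D}\text{-}Mod$. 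Since $\mathcal{D}$ is left noetherian, each $_X{\bf h}$ is noetherian (see \cite[$\S$ 3]{Mit1}), so $\mathcal{P}_0$ is noetherian in $\mathcal{D}\text{-}Mod$. Then $\mathcal{K}_0:=Ker(\eta_0)$, being a $\mathcal{D}$-submodule of a noetherian module, is finitely generated in $\mathcal{D}\text{-}Mod$; by the description of kernels in \eqref{epimonoy}, it also lies in ${_{\mathcal{D}}}{\mathscr{M}}^H$. Iterating, I obtain a resolution
\begin{equation*}
\mathcal{P}_*\colon\quad\cdots\longrightarrow\mathcal{P}_2\longrightarrow\mathcal{P}_1\longrightarrow\mathcal{P}_0\longrightarrow\mathcal{M}\longrightarrow 0
\end{equation*}
in ${_{\mathcal{D}}}{\mathscr{M}}^H$, where each $\mathcal{P}_i$ is of the form $(\bigoplus_{j}{_{Y_j^{(i)}}}{\bf h})\otimes W_i$ for a finite set of objects and a finite dimensional $H$-comodule $W_i$; each $\mathcal{P}_i$ is thus a finitely generated free (in particular projective) object in $\mathcal{D}\text{-}Mod$.

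Since $\mathcal{P}_*$ is a projective resolution of $\mathcal{M}$ in $\mathcal{D}\text{-}Mod$,
\begin{equation*}
\textnormal{Ext}^p_{\mathcal{D}\text{-}Mod}(\mathcal{M},\mathcal{I})=H^p\big(Hom_{\mathcal{D}\text{-}Mod}(\mathcal{P}_*,\mathcal{I})\big)\qquad p\geq 0.
\end{equation*}
Because each $\mathcal{P}_i$ is finitely generated in $\mathcal{D}\text{-}Mod$, Proposition \ref{4.12} identifies $Hom_{\mathcal{D}\text{-}Mod}(\mathcal{P}_i,\mathcal{I})=HOM_{\mathcal{D}\text{-}Mod}(\mathcal{P}_i,\mathcal{I})$, and these isomorphisms are compatible with the differentials. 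Hence it suffices to show $H^p(HOM_{\mathcal{D}\text{-}Mod}(\mathcal{P}_*,\mathcal{I}))=0$ for $p>0$.

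The latter is where injectivity of $\mathcal{I}$ in ${_{\mathcal{D}}}{\mathscr{M}}^H$ enters. I split the resolution $\mathcal{P}_*$ into short exact sequences $0\to \mathcal{K}_i\to\mathcal{P}_i\to\mathcal{K}_{i-1}\to 0$ in ${_{\mathcal{D}}}{\mathscr{M}}^H$ (with $\mathcal{K}_{-1}:=\mathcal{M}$). By Proposition \ref{exact(D,H)}(2), applying $HOM_{\mathcal{D}\text{-}Mod}(-,\mathcal{I})$ to each such short exact sequence yields a short exact sequence in $Comod\text{-}H$. Splicing these back together shows that $HOM_{\mathcal{D}\text{-}Mod}(\mathcal{P}_*,\mathcal{I})$ is a resolution of $HOM_{\mathcal{D}\text{-}Mod}(\mathcal{M},\mathcal{I})$, so its cohomology vanishes in positive degrees, completing the proof. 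The main obstacle I anticipate is the bookkeeping that keeps the constructed syzygies simultaneously in ${_{\mathcal{D}}}{\mathscr{M}}^H$ and finitely generated in $\mathcal{D}\text{-}Mod$ (so that Proposition \ref{4.12} applies at each step); this is precisely what the noetherian hypothesis on $\mathcal{D}$ secures.
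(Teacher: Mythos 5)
Your proposal is correct and follows essentially the same route as the paper: construct the free resolution $\mathcal{P}_*$ in ${_{\mathcal{D}}}{\mathscr{M}}^H$ using Proposition \ref{fg(D,H)} together with the noetherian hypothesis, identify $Hom_{\mathcal{D}\text{-}Mod}(\mathcal{P}_i,\mathcal{I})$ with $HOM_{\mathcal{D}\text{-}Mod}(\mathcal{P}_i,\mathcal{I})$ via Proposition \ref{4.12}, and conclude by the exactness of $HOM_{\mathcal{D}\text{-}Mod}(-,\mathcal{I})$ from Proposition \ref{exact(D,H)}. Your explicit splicing into short exact sequences merely spells out what the paper compresses into ``$HOM_{\mathcal{D}\text{-}Mod}(-,\mathcal{I})$ is exact, hence the cohomology vanishes.''
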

\begin{proof}
Since $\mathcal{M}\in {_{\mathcal{D}}}{\mathscr{M}}^H$ is finitely generated in $\mathcal{D}$-$Mod$, by Proposition \ref{fg(D,H)}, there exists a finite dimensional $H$-comodule $W_0$ and an epimorphism 
\begin{equation*}
\eta_{0}: \mathcal{P}_0:=\left(\bigoplus_{i=1}^{n_0}{_{X_i}}{\bf{h}} \right)\otimes W_0 \longrightarrow \mathcal{M}
\end{equation*}
in ${_{\mathcal{D}}}{\mathscr{M}}^H$ for finitely many objects $\{X_i\}_{1\leq i \leq n_0}$ in $\mathcal{D}$. Then, $\mathcal{K}:= Ker(\eta_0)$ is a subobject of $\mathcal{P}_0$ in ${_{\mathcal{D}}}{\mathscr{M}}^H$. Since $\mathcal{D}$ is left noetherian, $\mathcal{P}_0$ is a noetherian left $\mathcal{D}$-module (see, for instance, \cite[$\S$ 3]{Mit1}). Thus, the submodule $\mathcal{K}= Ker(\eta_0)$ of $\mathcal{P}_0$ is finitely generated as an object in $\mathcal{D}$-$Mod$. Therefore, we obtain a finite dimensional $H$-comodule $W_1$ and an epimorphism  
\begin{equation*}
\eta_{1}: \mathcal{P}_1:=\left(\bigoplus_{j=1}^{n_1} {_{Y_j}}{\bf h}\right)\otimes W_1 \longrightarrow \mathcal{K}
\end{equation*}
in ${_{\mathcal{D}}}{\mathscr{M}}^H$ for finitely many objects $\{Y_j\}_{1 \leq j \leq n_1}$ in $\mathcal{D}$. Since $W_0$ and $W_1$ are finite dimensional $K$-vector spaces, clearly $\mathcal{P}_0$ and $\mathcal{P}_1$ are free left $\mathcal{D}$-modules. Moreover, $Im(\eta_1) = \mathcal{K}= Ker(\eta_0)$. Continuing in this way, we can construct a free resolution of the module $\mathcal{M}$ in the category $\mathcal{D}$-$Mod$:
\begin{equation*}
\mathcal{P}_*=\hdots \longrightarrow \mathcal{P}_i\longrightarrow\hdots \longrightarrow \mathcal{P}_1\longrightarrow \mathcal{P}_0\longrightarrow \mathcal{M}\longrightarrow 0
\end{equation*}
This gives us
$$ \textnormal{Ext}_{\mathcal{D}\text{-}Mod}^p(\mathcal{M},\mathcal{I})= H^p(Hom_{\mathcal{D}\text{-}Mod}(\mathcal{P}_*,\mathcal{I})),\ \ \ \ \ \ \ \ \ \forall~p>0$$ 
Since $\mathcal{M}$ and $\{\mathcal P_i\}_{i\geq 0}$ are  finitely generated in $\mathcal{D}$-$Mod$, it follows from Proposition \ref{4.12} that   $HOM_{\mathcal{D}\text{-}Mod}(\mathcal{M},\mathcal{I})= Hom_{\mathcal{D}\text{-}Mod}(\mathcal{M},\mathcal{I})$ and $HOM_{\mathcal{D}\text{-}Mod}(\mathcal{P}_i,\mathcal{I})= Hom_{\mathcal{D}\text{-}Mod}(\mathcal{P}_i,\mathcal{I})$. From Proposition \ref{exact(D,H)}, we know that the functor $HOM_{\mathcal D\text{-}Mod}(-,\mathcal I)$ is exact and it  follows that $\textnormal{Ext}_{\mathcal{D}\text{-}Mod}^p(\mathcal{M},\mathcal{I})=H^p(HOM_{\mathcal{D}{\text{-}}Mod}(\mathcal{P}_*,\mathcal{I}))=0$ for all $p>0$. 
\end{proof}

\begin{prop}\label{injres}
Let $\mathcal{D}$ be a left noetherian right co-$H$-category. Let $\mathcal{M},\mathcal{N} \in {_{\mathcal{D}}}{\mathscr{M}}^H$ with $\mathcal{M}$ finitely generated as an object in $\mathcal{D}$-$Mod$. If $\mathcal{E}^*$ is an injective resolution of $\mathcal{N}$ in ${_{\mathcal{D}}}{\mathscr{M}}^H$, then
$$\textnormal{Ext}_{\mathcal{D}\text{-}Mod}^p(\mathcal{M},\mathcal{N})=R^pHOM_{\mathcal{D}\text{-}Mod}(\mathcal{M},\mathcal{N})=H^p\big(Hom_{\mathcal{D}{\text{-}}Mod}(\mathcal{M},\mathcal{E}^*)\big), \quad \forall~p \geq 0.$$
\end{prop}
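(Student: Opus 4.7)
My strategy mirrors that of Proposition \ref{3.14}, using a universal $\delta$-functor argument, with the two core ingredients being the finitely-generated free resolution constructed in Proposition \ref{freeres(D,H)} and the coincidence $Hom = HOM$ on finitely generated modules from Proposition \ref{4.12}. Since $\mathcal{D}$ is left noetherian and $\mathcal{M}$ is finitely generated in $\mathcal{D}$-$Mod$, I obtain a resolution $\mathcal{P}_*\longrightarrow\mathcal{M}\longrightarrow 0$ in $\mathcal{D}$-$Mod$ in which each $\mathcal{P}_i$ lies in ${_{\mathcal{D}}}{\mathscr{M}}^H$ and is finitely generated as a $\mathcal{D}$-module. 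Applying Proposition \ref{4.12} term by term yields $Hom_{\mathcal{D}\text{-}Mod}(\mathcal{P}_i,\mathcal{N}) = HOM_{\mathcal{D}\text{-}Mod}(\mathcal{P}_i,\mathcal{N})$ naturally in $Comod$-$H$, so that
$$\textnormal{Ext}^p_{\mathcal{D}\text{-}Mod}(\mathcal{M},\mathcal{N}) = H^p\big(Hom_{\mathcal{D}\text{-}Mod}(\mathcal{P}_*,\mathcal{N})\big) = H^p\big(HOM_{\mathcal{D}\text{-}Mod}(\mathcal{P}_*,\mathcal{N})\big)$$
acquires a canonical $H$-comodule structure.

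Consequently, $\{\textnormal{Ext}^p_{\mathcal{D}\text{-}Mod}(\mathcal{M},-)\}_{p\geq 0}$ may be viewed as a cohomological $\delta$-functor from ${_{\mathcal{D}}}{\mathscr{M}}^H$ to $Comod$-$H$. In degree zero, Proposition \ref{4.12} identifies this functor with $HOM_{\mathcal{D}\text{-}Mod}(\mathcal{M},-)$. By Proposition \ref{freeres(D,H)}, $\textnormal{Ext}^p_{\mathcal{D}\text{-}Mod}(\mathcal{M},\mathcal{I}) = 0$ for every injective $\mathcal{I}$ in ${_{\mathcal{D}}}{\mathscr{M}}^H$ and every $p > 0$; since ${_{\mathcal{D}}}{\mathscr{M}}^H$ is a Grothendieck category it has enough injectives, so each such functor is effaceable in positive degrees. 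By \cite[Corollary III.1.4]{Hart}, the family is then a universal $\delta$-functor, and therefore
$$\textnormal{Ext}^p_{\mathcal{D}\text{-}Mod}(\mathcal{M},-) \;\cong\; R^p HOM_{\mathcal{D}\text{-}Mod}(\mathcal{M},-)$$
as functors from ${_{\mathcal{D}}}{\mathscr{M}}^H$ to $Comod$-$H$. Evaluating at $\mathcal{N}$ and computing $R^p HOM_{\mathcal{D}\text{-}Mod}(\mathcal{M},-)(\mathcal{N})$ via the injective resolution $\mathcal{E}^*$, then applying Proposition \ref{4.12} a final time to rewrite $HOM_{\mathcal{D}\text{-}Mod}(\mathcal{M},\mathcal{E}^i)$ as $Hom_{\mathcal{D}\text{-}Mod}(\mathcal{M},\mathcal{E}^i)$, delivers the desired chain of equalities.

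The main point requiring care is the first step, namely the promotion of the cohomology of $Hom_{\mathcal{D}\text{-}Mod}(\mathcal{P}_*,-)$ to a $\delta$-functor valued in $Comod$-$H$: one must check both that the differentials in $Hom_{\mathcal{D}\text{-}Mod}(\mathcal{P}_*,\mathcal{N})$ are $H$-colinear under the comodule structure supplied by Proposition \ref{4.12}, and that the connecting maps induced by a short exact sequence $0\to\mathcal{N}'\to\mathcal{N}\to\mathcal{N}''\to 0$ in ${_{\mathcal{D}}}{\mathscr{M}}^H$ remain $H$-colinear. Both follow once the complex is reinterpreted through the identification $Hom = HOM$, since the differentials of $HOM_{\mathcal{D}\text{-}Mod}(\mathcal{P}_*,-)$ are by construction morphisms in $Comod$-$H$. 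Apart from this bookkeeping, the argument is a direct transposition of the effaceable/universal $\delta$-functor reasoning used in Proposition \ref{3.14} to the present co-$H$ setting.
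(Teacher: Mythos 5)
Your proposal is correct and follows essentially the same route as the paper's own proof: the free resolution from Proposition \ref{freeres(D,H)}, the identification $Hom = HOM$ on finitely generated objects from Proposition \ref{4.12}, effaceability on injectives, and the universal $\delta$-functor argument via \cite[Corollary III.1.4]{Hart}. Your extra remark about verifying $H$-colinearity of the differentials and connecting maps is a point the paper passes over implicitly, and your resolution of it (reading the complex as $HOM_{\mathcal{D}\text{-}Mod}(\mathcal{P}_*,-)$, whose differentials are morphisms in $Comod$-$H$ by construction) is the right one.
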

\begin{proof}
Let $\mathcal{P}_*$ be the free resolution of $\mathcal{M}$ in $\mathcal{D}$-$Mod$ constructed as in the proof of Proposition \ref{freeres(D,H)}. Then, we have
$$\textnormal{Ext}_{\mathcal{D}\text{-}Mod}^p(\mathcal{M},\mathcal{N})=H^p\big(Hom_{\mathcal{D}{\text{-}}Mod}(\mathcal{P}_*,\mathcal{N})\big)=H^p\big(HOM_{\mathcal{D}{\text{-}}Mod}(\mathcal{P}_*,\mathcal{N})\big)$$
where the second equality follows from Proposition \ref{4.12}. Since $HOM_{\mathcal{D}{\text{-}}Mod}(\mathcal{P}_*,\mathcal{N})$ is a complex in $Comod$-$H$ and $Comod$-$H$ is an abelian category, it follows that $H^p\big(HOM_{\mathcal{D}{\text{-}}Mod}(\mathcal{P}_*,\mathcal{N})\big)\in Comod$-$H$. Hence, we may consider the family $\{\textnormal{Ext}_{\mathcal{D}\text{-}Mod}^p(\mathcal{M},-)\}_{p\geq 0}$ as a $\delta-$functor from ${_{\mathcal{D}}}{\mathscr{M}}^H$ to $Comod$-$H$.\smallskip
\newline By Proposition \ref{freeres(D,H)}, $\textnormal{Ext}_{\mathcal{D}\text{-}Mod}^p(\mathcal{M},\mathcal{I})=0$,~$p>0$ for every injective object $\mathcal{I} \in {_{\mathcal{D}}}{\mathscr{M}}^H$. Since ${_{\mathcal{D}}}{\mathscr{M}}^H$ has enough injectives, it follows
  that each $\textnormal{Ext}_{\mathcal{D}\text{-}Mod}^p(\mathcal{M},-):{_{\mathcal{D}}}{\mathscr{M}}^H\longrightarrow Comod$-$H$ is effaceable (see, for instance, \cite[$\S$ III.1]{Hart}). 
  
  \smallskip 
Since ${_{\mathcal{D}}}{\mathscr{M}}^H$ has enough injectives, we can consider the right derived functors
  \begin{equation*} R^p HOM_{\mathcal{D}{\text{-}}Mod}(\mathcal{M},-):{_{\mathcal{D}}}{\mathscr{M}}^H \longrightarrow Comod\text{-}H \qquad p\geq 0
  \end{equation*} For $p=0$, we notice that
$\textnormal{Ext}_{\mathcal{D}\text{-}Mod}^0(\mathcal{M},-)=Hom_{\mathcal{D}\text{-}Mod}(\mathcal{M},-)=HOM_{\mathcal{D}{\text{-}}Mod}(\mathcal{M},-)=R^0HOM_{\mathcal{D}{\text{-}}Mod}(\mathcal{M},-)$ as functors from ${_{\mathcal{D}}}{\mathscr{M}}^H$ to $Comod\text{-}H$. Since each $\textnormal{Ext}_{\mathcal{D}\text{-}Mod}^p(\mathcal{M},-)$ is effaceable for $p>0$, we see that the family $\{\textnormal{Ext}_{\mathcal{D}\text{-}Mod}^p(\mathcal{M},-)\}_{p\geq 0}$ forms a universal $\delta$-functor and it follows from \cite[Corollary III.1.4]{Hart} that
\begin{equation*}\textnormal{Ext}_{\mathcal{D}\text{-}Mod}^p(\mathcal{M},-)=R^pHOM_{\mathcal{D}{\text{-}}Mod}(\mathcal{M},-):{_{\mathcal{D}}}{\mathscr{M}}^H \longrightarrow Comod\text{-}H
\end{equation*} for every $p\geq 0$. Therefore, we have
$$\textnormal{Ext}_{\mathcal{D}\text{-}Mod}^p(\mathcal{M},\mathcal{N})=\left(R^pHOM_{\mathcal{D}{\text{-}}Mod}(\mathcal{M},-)\right)(\mathcal{N})=H^p\big(HOM_{\mathcal{D}{\text{-}}Mod}(\mathcal{M},\mathcal{E}^*)\big)=H^p\big(Hom_{\mathcal{D}{\text{-}}Mod}(\mathcal{M},\mathcal{E}^*)\big)$$ 

\end{proof}

Recall that by Proposition \ref{4.12}, for any $\mathcal{M}\in {_{\mathcal{D}}}{\mathscr{M}}^H$ with $\mathcal{M}$ finitely generated as an object in $\mathcal{D}$-$Mod$, we have $Hom_{\mathcal{D}\text{-}Mod}(\mathcal{M},\mathcal{N})= HOM_{\mathcal{D}\text{-}Mod}(\mathcal{M},\mathcal{N})\in Comod$-$H$.
\begin{theorem}\label{Tf5.17}
Let $\mathcal{D}$ be a left noetherian right co-$H$-category. Let $\mathcal{M}\in {_{\mathcal{D}}}{\mathscr{M}}^H$ with $\mathcal{M}$ finitely generated as an object in $\mathcal{D}$-$Mod$. We consider the functors
\begin{equation*}
\begin{array}{c} \mathscr F=Hom_{\mathcal{D} \text{-}Mod}(\mathcal{M},-): {_{\mathcal{D}}}{\mathscr{M}}^H\longrightarrow {Comod}\text{-}H\qquad \mathcal{N}\mapsto Hom_{\mathcal{D} \text{-}Mod}(\mathcal{M},\mathcal{N})\\
\mathscr G=({-})^{coH}:{Comod}\text{-}H\longrightarrow Vect_K\qquad M\mapsto M^{coH}\\
\end{array}
\end{equation*} Then, we have the following spectral sequence
$$R^{p}(-)^{coH}\big(\textnormal{Ext}^q_{\mathcal{D}\text{-}Mod}(\mathcal{M},\mathcal{N})\big)\Rightarrow \left({R}^{p+q}Hom_{{_{\mathcal{D}}}{\mathscr{M}}^H}(\mathcal{M},-)\right)(\mathcal{N})$$
\end{theorem}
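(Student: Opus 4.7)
The plan is to apply the Grothendieck spectral sequence for composite functors to the pair $(\mathscr{F},\mathscr{G})$, so the task reduces to verifying the three standard hypotheses: identifying the composite $\mathscr{G}\circ\mathscr{F}$, computing the right derived functors $R^q\mathscr{F}$, and showing that $\mathscr{F}$ sends injectives to $\mathscr{G}$-acyclic objects. All the essential ingredients are already available from the preceding results.

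First I would identify the composite. Since $\mathcal{M}$ is finitely generated as a $\mathcal{D}$-module, Proposition \ref{4.12} gives $Hom_{\mathcal{D}\text{-}Mod}(\mathcal{M},\mathcal{N})=HOM_{\mathcal{D}\text{-}Mod}(\mathcal{M},\mathcal{N})$ as objects of $Comod\text{-}H$, and then Corollary \ref{H-coH} yields
\begin{equation*}
(\mathscr{G}\circ\mathscr{F})(\mathcal{N})=Hom_{\mathcal{D}\text{-}Mod}(\mathcal{M},\mathcal{N})^{coH}=HOM_{\mathcal{D}\text{-}Mod}(\mathcal{M},\mathcal{N})^{coH}=Hom_{{_{\mathcal{D}}}{\mathscr{M}}^H}(\mathcal{M},\mathcal{N}),
\end{equation*}
which matches the functor whose derived functors appear on the right-hand side of the spectral sequence.

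Next I would compute $R^q\mathscr{F}$. For an injective resolution $\mathcal{E}^*$ of $\mathcal{N}$ in ${_{\mathcal{D}}}{\mathscr{M}}^H$, Proposition \ref{injres} asserts $H^q(Hom_{\mathcal{D}\text{-}Mod}(\mathcal{M},\mathcal{E}^*))=\textnormal{Ext}^q_{\mathcal{D}\text{-}Mod}(\mathcal{M},\mathcal{N})$ for every $q\geq 0$, so $R^q\mathscr{F}(\mathcal{N})=\textnormal{Ext}^q_{\mathcal{D}\text{-}Mod}(\mathcal{M},\mathcal{N})$ as objects of $Comod\text{-}H$. This supplies the $E_2$-page $R^p(-)^{coH}(\textnormal{Ext}^q_{\mathcal{D}\text{-}Mod}(\mathcal{M},\mathcal{N}))$.

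Finally I would check the injective-preservation hypothesis. Let $\mathcal{I}$ be an injective in ${_{\mathcal{D}}}{\mathscr{M}}^H$. By Corollary \ref{4.8}, $HOM_{\mathcal{D}\text{-}Mod}(\mathcal{M},\mathcal{I})$ is injective in $Comod\text{-}H$, and by Proposition \ref{4.12} this coincides with $\mathscr{F}(\mathcal{I})=Hom_{\mathcal{D}\text{-}Mod}(\mathcal{M},\mathcal{I})$; hence $\mathscr{F}(\mathcal{I})$ is injective, a fortiori acyclic for $\mathscr{G}=(-)^{coH}$. Since ${_{\mathcal{D}}}{\mathscr{M}}^H$ and $Comod\text{-}H$ are both Grothendieck categories, they have enough injectives, and the Grothendieck spectral sequence for the composite functor (see \cite{Grothen}) then yields the desired first-quadrant spectral sequence. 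The only real subtlety — and the step that justifies why the finite-generation hypothesis on $\mathcal{M}$ is needed — is the identification in the preceding two paragraphs of $Hom_{\mathcal{D}\text{-}Mod}(\mathcal{M},-)$ with $HOM_{\mathcal{D}\text{-}Mod}(\mathcal{M},-)$, which alone makes the target of $\mathscr{F}$ the category $Comod\text{-}H$ so that the composite with $(-)^{coH}$ is meaningful.
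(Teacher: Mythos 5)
Your proposal is correct and follows essentially the same route as the paper's proof: it identifies the composite via Proposition \ref{4.12} and Corollary \ref{H-coH}, computes $R^q\mathscr{F}$ via Proposition \ref{injres}, and verifies injective preservation via Corollary \ref{4.8} together with Proposition \ref{4.12} before invoking the Grothendieck spectral sequence. No gaps.
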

\begin{proof}
By Corollary \ref{H-coH}, we have 
\begin{equation*}(\mathscr{G}\circ\mathscr{F})(\mathcal{N})=Hom_{\mathcal{D}\text{-}Mod}(\mathcal{M},\mathcal{N})^{coH}=HOM_{\mathcal{D}\text{-}Mod}(\mathcal{M},\mathcal{N})^{coH}=Hom_{{_{\mathcal{D}}}{\mathscr{M}}^H}(\mathcal{M},\mathcal{N})
\end{equation*} By definition,
\begin{equation}
R^q\mathscr F(N)=H^q(\mathscr F(\mathcal E^*))=H^q(Hom_{\mathcal D\text{-}Mod}(\mathcal M,\mathcal E^*))
\end{equation} where $\{\mathcal E^*\}$ is an injective resolution of $\mathcal N$ in ${_{\mathcal{D}}}{\mathscr{M}}^H$. Applying Corollary \ref{injres}, we obtain $\textnormal{Ext}_{\mathcal{D}\text{-}Mod}^q(\mathcal{M},\mathcal{N})=R^q\mathscr F(N)$.  For any injective object $\mathcal{I}$ in ${_{\mathcal{D}}}{\mathscr{M}}^H$, we know that $\mathscr{F}(\mathcal{I})=Hom_{\mathcal{D} \text{-}Mod}(\mathcal{M},\mathcal I)=HOM_{\mathcal{D} \text{-}Mod}(\mathcal{M},\mathcal I)$ is injective in $Comod\text{-}H$ by Corollary \ref{4.8}. Since $Comod$-$H$ is a Grothendieck category, it has enough injectives. The result now follows from Grothendieck spectral sequence for composite functors (see \cite{Grothen}).
\end{proof}

\begin{bibdiv}
	\begin{biblist}
	
	\bib{AB1}{article}{
   author={A.~Banerjee},
   title={On differential torsion theories and rings with several objects},
   journal={Canadian Mathematical Bulletin (to appear)},

}
	
	\bib{BCV}{article}{
   author={E.~Batista,},
   author={S.~Caenepeel, },
   author={J.~Vercruysse, },
   title={Hopf categories},
   journal={Algebr. Represent. Theory},
   volume={19},
   date={2016},
   number={5},
   pages={1173--1216},
  
}

\bib{Borc}{book}{
   author={Borceux, F.},
   title={Handbook of categorical algebra. 2},
   series={Encyclopedia of Mathematics and its Applications},
   volume={51},
   note={Categories and structures},
   publisher={Cambridge University Press, Cambridge},
   date={1994},
   pages={xviii+443},
}
	
	\bib{CanGue}{article}{
   author={S.~Caenepeel, },
   author={T.~Gu\'ed\'enon, },
   title={On the cohomology of relative Hopf modules},
   journal={Comm. Algebra},
   volume={33},
   date={2005},
   number={11},
   pages={4011--4034},
}

\bib{CF}{article}{
   author={S.~Caenepeel, },
   author={T.~Fieremans, },
   title={Descent and Galois theory for Hopf categories},
   journal={J. Algebra Appl.},
   volume={17},
   date={2018},
   number={7},
   pages={1850120, 39},

}

\bib{CiSo}{article}{
   author={C.~Cibils,},
   author={A.~Solotar,},
   title={Galois coverings, Morita equivalence and smash extensions of
   categories over a field},
   journal={Doc. Math.},
   volume={11},
   date={2006},
   pages={143--159},
}

\bib{SCS}{book}{
   author={S.~D\u{a}sc\u{a}lescu, },
   author={C.~N\u{a}st\u{a}sescu,},
   author={\c{S}.~Raianu, },
   title={Hopf algebras},
   series={Monographs and Textbooks in Pure and Applied Mathematics},
   volume={235},
   note={An introduction},
   publisher={Marcel Dekker, Inc., New York},
   date={2001},
   pages={x+401},
  }

\bib{EV}{article}{
   author={S.~Estrada, },
   author={S.~Virili,},
   title={Cartesian modules over representations of small categories},
   journal={Adv. Math.},
   volume={310},
   date={2017},
   pages={557--609},
  
}

\bib{Grothen}{article}{
   author={A.~Grothendieck,},
   title={Sur quelques points d'alg\`ebre homologique},
   journal={T\^ohoku Math. J. (2)},
   volume={9},
   date={1957},
   pages={119--221},
}

\bib{Gue1}{article}{
   author={T.~Gu\'ed\'enon,},
   title={Projectivity and flatness of a module over the subring of invariants},
   journal={Comm. Algebra},
   volume={29},
   date={2001},
   number={10},
   pages={4357--4376},
}

	\bib{Gue}{article}{
   author={T.~Gu\'ed\'enon,},
   title={On the $H$-finite cohomology},
   journal={J. Algebra},
   volume={273},
   date={2004},
   number={2},
   pages={455--488},
}

\bib{Hart}{book}{
   author={R.~Hartshorne, },
   title={Algebraic geometry},
   note={Graduate Texts in Mathematics, No. 52},
   publisher={Springer-Verlag, New York-Heidelberg},
   date={1977},
   pages={xvi+496},
}

\bib{HS07}{article}{
   author={E.~Herscovich,},
   author={A.~Solotar, },
   title={Hochschild-Mitchell cohomology and Galois extensions},
   journal={J. Pure Appl. Algebra},
   volume={209},
   date={2007},
   number={1},
   pages={37--55},
}

\bib{KS}{book}{
   author={M.~Kashiwara,},
   author={P.~Schapira,},
   title={Categories and Sheaves},
   publisher={Springer-Verlag, Berlin-Heidelberg},
   date={2006},
}

\bib{kk}{article}{
   		author={A.~Kaygun, },
   		author={M.~Khalkhali,},
   		title={Bivariant Hopf cyclic cohomology},
   		journal={Comm. Algebra},
   		volume={38},
   		date={2010},
   		number={7},
   		pages={2513--2537}
}

\bib{GMK}{book}{
   author={Kelly, G. M.},
   title={Basic concepts of enriched category theory},
   series={London Mathematical Society Lecture Note Series},
   volume={64},
   publisher={Cambridge University Press, Cambridge-New York},
   date={1982},
   pages={245},
}

\bib{LoVa}{article}{
   author={W.~Lowen, },
   author={M.~Van den Bergh, },
   title={Hochschild cohomology of abelian categories and ringed spaces},
   journal={Adv. Math.},
   volume={198},
   date={2005},
   number={1},
   pages={172--221},
  
}

\bib{LoVa1}{article}{
   author={W.~Lowen, },
   author={M.~Van den Bergh,},
   title={Deformation theory of abelian categories},
   journal={Trans. Amer. Math. Soc.},
   volume={358},
   date={2006},
   number={12},
   pages={5441--5483},
}

\bib{Lo}{article}{
   author={W.~Lowen, },
   title={Hochschild cohomology with support},
   journal={Int. Math. Res. Not. IMRN},
   date={2015},
   number={13},
   pages={4741--4812},
 
}

\bib{Mit1}{article}{
   author={Mitchell, B.},
   title={Rings with several objects},
   journal={Advances in Math.},
   volume={8},
   date={1972},
   pages={1--161},
}

\bib{Mit2}{article}{
   author={Mitchell, B.},
   title={Some applications of module theory to functor categories},
   journal={Bull. Amer. Math. Soc.},
   volume={84},
   date={1978},
   number={5},
   pages={867--885},
}

\bib{DF}{article}{
   author={Murfet, D.},
   title={Rings with Several Objects},
   note={http://www.therisingsea.org/notes/RingsWithSeveralObjects.pdf},
}

\bib{Psch1}{article}{
   author={Schauenburg, P.},
   title={Hopf algebra extensions and monoidal categories},
   conference={
      title={New directions in Hopf algebras},
   },
   book={
      series={Math. Sci. Res. Inst. Publ.},
      volume={43},
      publisher={Cambridge Univ. Press, Cambridge},
   },
   date={2002},
   pages={321--381},
}

\bib{StSt}{article}{
   author={St\u{a}nescu, A.},
   author={\c{S}tefan, D.},
   title={Cleft comodule categories},
   journal={Comm. Algebra},
   volume={41},
   date={2013},
   number={5},
   pages={1697--1726},
}

\bib{Sten}{book}{
   author={Stenstr\"om, B.},
   title={Rings of quotients},
   note={An introduction to methods of ring theory},
   publisher={Springer-Verlag, New York-Heidelberg},
   date={1975},
   pages={viii+309},
}

\bib{t}{article}{
   author={Takeuchi, M.},
   title={A correspondence between Hopf ideals and sub-Hopf algebras},
   journal={Manuscripta Math.},
   volume={7},
   date={1972},
   pages={251--270},
}

\bib{SP}{article}{
  title        = {The Stacks project},
  note = {https://stacks.math.columbia.edu/tag/015Y},
  year         = {2018},
}

\bib{Ulb}{article}{
   author={Ulbrich, K.-H.},
   title={Smash products and comodules of linear maps},
   journal={Tsukuba J. Math.},
   volume={14},
   date={1990},
   number={2},
   pages={371--378},
}

\bib{Xu1}{article}{
   author={F.~Xu, },
   title={On the cohomology rings of small categories},
   journal={J. Pure Appl. Algebra},
   volume={212},
   date={2008},
   number={11},
   pages={2555--2569},
 
}

\bib{Xu2}{article}{
   author={F.~Xu, },
   title={Hochschild and ordinary cohomology rings of small categories},
   journal={Adv. Math.},
   volume={219},
   date={2008},
   number={6},
   pages={1872--1893},
}

	\end{biblist}
\end{bibdiv}
\end{document}